\numberwithin{equation}{section}
\newtheorem{theorem}{\sc Theorem}[section]
\newtheorem{lemma}{\sc Lemma}[section]
\newtheorem{remark}{\sc Remark}
\def\R{\mathbb R}
\def\n{\nonumber}
\def\p{\tilde p}
\def\f{\frac{n-1}{n}}
\def\ost{\Omega^s(t)}
\def\oft{\Omega^f(t)}
\def\oftp{\Omega^-(t)}
\def\ostp{\Omega^+(t)}
\def\oftn{\Omega^f(t_n)}
\def\p{\partial}
\def\n{\nonumber}
\def\f{\frac}
\def\ttau{\tilde{\tau}}
\def\tn{\tilde{n}}
\title[Finite-time singularity formation for moving interface Euler equations]
      {Finite-time singularity formation for moving interface Euler equations}
\author[D. Coutand]{Daniel Coutand}
\address{Maxwell Institute for Mathematical Sciences and department of Mathematics,
Heriot-Watt University, Edinburgh, EH14 4AS, UK}
\email{d.coutand@hw.ac.uk}
\date{\today}    
 \keywords{vortex sheets, Euler equations, finite time blow-up and contact}  
\begin{document}

\maketitle

\begin{abstract}
This paper provides a general method for establishing finite-time singularity formation for moving interface problems involving the incompressible Euler equations in the plane. This methodology is applied to two different problems.

The first problem considered is the two-phase vortex sheet problem with surface tension, for which, under suitable assumptions of smallness of the initial height of the heaviest phase and velocity fields, is proved the finite-time singularity of the natural norm of the problem. This is in striking contrast with the case of finite-time splash and splat singularity formation for the one- phase Euler equations of \cite{CaCoFeGaGo2013} and \cite{CoSh2014}, for which the natural norm (in the one-phase fluid) stays finite all the way until contact.

The second problem considered involves the presence of a heavier rigid body moving in the inviscid fluid for which well-posedness was recently established in \cite{GlSu2015}. For a very general set of geometries (essentially the bottom of the symmetric domain being a graph) we first establish that the rigid body will hit the bottom of the fluid domain in finite time. This result allows for more general geometries than the ones first considered by \cite{MuRa2015} for this problem, as well as for small square integrable vorticity. Next, we establish the blow-up of a surface energy and a characterization of acceleration at contact: It opposes the motion,  and is either strictly positive and finite if the contact zone is of non zero length, or infinite otherwise.

\end{abstract}

\section{Introduction}
\label{intro}

Finite-time singularity formation in moving boundary problems have been an active field of research for at least the past 20 years. Historically the first problems studied were for a rigid body moving in a fluid (see \cite{Hi2007}, \cite{HiTa2009}, \cite{GVHi2010}, \cite{GVHiWa2015} for the viscous fluid case and \cite{HoMu2008}, \cite{MuRa2015} for the inviscid case), which presents the simplification at the level of the analysis of having a constant shape for the inclusion. More recently the case of one-phase and two-phase Euler interface problem have started to be considered. The present paper presents a new methodology addressing finite-time singularity formation for any type of problems when the fluid equations are the incompressible Euler equations and the physical law of the included phase provides spatial control of the position of the interface.

 The first problem considered is the formation of finite-time singularity for the two-phase moving interface Euler equations with surface tension. This problem is known to be locally in time well-posed for a natural norm $N(t)$ encoding the Sobolev regularity of the velocity field in each phase and the regularity of the moving interface (see \cite{Am2003}, \cite{AmMa2007} for the irrotational case, and \cite{ChCoSh2008}, \cite{ShZe2008},\cite{ShZe2011} for the case with vorticity).
 
 The one-phase water waves problem is known to be locally in time well-posed in Sobolev spaces, as the pressure condition holding in this situation avoids any Rayleigh-Taylor instability (\cite{Wu1997}, \cite{Wu1999}, \cite{La2005}, \cite{AlBuZu2014} for the case without vorticity and \cite{ChLi2000}, \cite{Li2004}, \cite{CoSh2007}, \cite{ShZe2008a} for the case with vorticity). The first type of singularity formation in finite time for this problem in Sobolev spaces was established by Castro et al in \cite{CaCoFeGaGo2013} by introducing the notion of splash and splat singularity, which is the self-intersection of the moving free boundary while the curve remains smooth (but is no longer locally on one side of its boundary at contact). This result was generalised in $3$-D and with vorticity by 
Coutand and Shkoller \cite{CoSh2014} by a very different approach. Our approach can be easily applied to many one-phase hyperbolic free boundary problems. It is to be noted that this type of splash singularity is purely restricted to a loss of injectivity, since the natural norm of the problem stays bounded until the time of contact. 

A natural question that then arose was to extend this type of self-contact along a smooth curve in the two-phase context (with surface tension to make the problem locally well-posed in Sobolev spaces). With different methods, Fefferman, Ionescu and Lie \cite{FeIoLi2013} and Coutand and Shkoller\cite{CoSh2016},  established that the two-phase vortex sheet problem with surface tension does not have finite-time formation of a splash or splat singularity so long as the natural norm of the problem for the velocity field in one phase stays bounded.  The results of \cite{CoSh2016} and \cite{FeIoLi2013} however do not exclude such a loss of injectivity. If it was to occur, it would involve blow-up of the natural norm of the problem in both phases.

The present paper introduces a new methodology, based upon studying the motion of the center of gravity of one of the two phases, which provides a differential inequality for a surface energy introduced in the present paper. We here establish that under some symmetry assumptions at time zero, and with gravity effects, there will either be a loss of injectivity or the natural norm of the problem $N(t)$ will blow-up in finite time. In both cases of this alternative, we show the natural norm of the problem blows up.  This result is in striking contrast with splash and splat singularity formation for one-phase water-waves problem introduced in \cite{CaCoFeGaGo2013}, and treated with different methods in a more general context in \cite{CoSh2014}, where the natural norm $N(t)$ stays finite. This was essential in the analysis of these papers in order to establish the finite-time contact, as this ensures that the magnitude of the relative velocity between two parts of an almost self-intersecting curve coming towards each other will be in magnitude greater than some strictly positive quantity. Such an approach would be impossible here, as in the two-phase problem, any contact would involve the formation of a cusp, which would make impossible high order elliptic estimates.
 
We next turn our attention to the case of the motion of a rigid body in a domain where the bottom is a graph. This is a simpler problem given that the shape of the interface stays constant for all time, which removes some considerable level of difficulty from the previous problem. The interest of this problem resides in allowing a more precise description of the behaviour at the time of singularity than for the case with deformable interface.

Recently, Glass $\&$ Sueur \cite{GlSu2015} proved that the motion of a rigid body in an inviscid fluid in a  domain in the plane is globally in time well posed so long as no contact occurs between the moving rigid body and the boundary. The qualitative question of whether contact singularity formation in finite-time is possible in the natural case where $u\cdot n=0$ on $\p\Omega$ arises then naturally.

The first results for finite-time contact for the rigid body case with zero vorticity in the inviscid fluid were obtained by Houot and Munnier \cite{HoMu2008} for the case of the disk in the half plane, and generalized by Munnier and Ramdani \cite{MuRa2015}, where they establish for the flat bottom case with the rigid body being a graph of the type $x_2=C |x_1|^{1+\alpha}$ ($\alpha>0$), or various combinations of bottoms and domains with specific concavity, that finite-time contact occurs, with a velocity which is shown to be either zero or non zero depending on $\alpha$. Other cases involving discussions on concavity of domains are also treated in \cite{MuRa2015}. It is to be noted that their methods, purely elliptic in nature in some rescaled infinite strip, require the zero vorticity assumption of their paper at the level of the rescaling of the elliptic problem in this infinite strip. 

For this problem of the rigid inclusion (where the shape of the inclusion does not change), our new methodology (based on a differential inequality for a surface energy that we identify) allows us to consider more general geometries than in \cite{MuRa2015} (namely we just need the bottom of the domain being a general graph, and no assumption for the solid, except symmetry with respect to the vertical axis) and also allow for small square integrable vorticity. We first establish here the question of finite-time contact at $T_{max}>0$ when gravity effects are taken into account (in particular the rigid body is assumed of higher density than the fluid phase). We then establish a set of blow-up properties satisfied by the fluid velocity and pressure fields and acceleration as $t\rightarrow T_{max}$, which are new for this kind of problems:\hfill\break
  $\bullet$ First, although the solid velocity stays bounded for all time of existence, the present paper establishes the fluid has a radically different behaviour, as the $L^2(\p\Omega)$ norm of the fluid velocity approaches $\infty$ near contact. This happens in a neighborhood of the contact zone, whereas away from the contact zone, the fluid velocity stays bounded. \hfill\break
 $\bullet$ Second, this work also establishes that the acceleration of the rigid body becomes infinite in the upward direction at the time of contact, except for the case where the contact zone contains a curve of non zero length, in which case the acceleration remains strictly positive and bounded close to the time of contact.  This behaviour is strikingly different from the behaviour of a material point falling in void (the basic question of elementary Newtonian mechanics), for which the motion has constant negative acceleration $-g$.
\hfill\break

 The plan of this paper is as follows. 
 
 In Sections 2 to 6, we remind the vortex sheet problem with surface tension, precise notations, and our type of initial data. In Section 7, we derive our differential inequality for a surface energy. This differential inequality structure appears by tracking the motion of the center of gravity of the heavier phase, using some symmetries of the Euler equations, and some elliptic estimates away from the heavier phase. It appears in a way quite natural to the problem of a moving Euler phase, and is also quite different from the pioneering works of Sideris \cite{Si1986} and Xin \cite{Xi1998} for compressible Euler and Navier-Stokes equations. We then use this differential inequality to establish in Section 8 our first theorem on finite-time singularity formation:
  \begin{theorem}
 \label{theorem1}
 Let $\Omega$ be a symmetric domain with respect to $x_1=0$ of class $H^{\frac{9}{2}}$ localy on one side of its boundary and satisfying the assumptions of Section \ref{section6} (in particular the bottom of $\p\Omega$ is a general graph), and let $\overline{\Omega^+}\subset\Omega$ be a domain of same regularity whose center of gravity is at altitude $h$ at time zero and symmetric with respect to $x_1=0$. Let $\Omega^-=\Omega\cap{\overline{\Omega^+}}^c$. Let us assume that $$\|u^\pm_0\|_{L^2(\Omega^\pm)}+\|\omega_0^-\|_{L^2(\Omega^-)}+h+ |\p\Omega^+|$$ is small enough, and that $\rho^+>\rho^-$. Then, for some $T_{max}\in (0,\infty)$:\hfill\break
 \noindent 1) either $\displaystyle\lim_{t\rightarrow T_{max}^-} N(t)=\infty$, where
 $$N(t)=\|u^-\|_{H^3(\oftp)}+\|u^+\|_{H^3(\ostp)}+\|\eta^f\|_{H^{\f{9}{2}}(\Omega^-)}\,,$$ where $\eta^-$ denote the Lagrangian flow map associated to $u^-$,\hfill\break
 \noindent 2) or there is either a self-intersection of the interface $\p\Omega^+(T_{max})$ with itself, or contact of $\p\Omega^+(T_{max})$ with $\p\Omega$ at $T_{max}$,\hfill\break
 \noindent 3) or $\int_0^{T_{max}}\int_{\p\Omega} |u^-|^2\ dldt=\infty\,.$
 \end{theorem}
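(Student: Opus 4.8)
The plan is to argue by contradiction: assume that on a maximal interval $[0,T_{max})$ all three alternatives fail, so that $N(t)$ stays bounded, no self-intersection or contact of $\p\Omega^+$ occurs, and $\int_0^{T_{max}}\int_{\p\Omega}|u^-|^2\,dl\,dt<\infty$. Under these assumptions the two-phase problem remains well-posed and one can follow the flow of the heavier phase indefinitely in $[0,T_{max})$, with $\Omega^+(t)$ staying at positive distance from $\p\Omega$ and from touching itself. The first step is to introduce the surface energy of Section 7 and its differential inequality; tracking the center of gravity $y(t)=\frac{1}{|\Omega^+|}\int_{\Omega^+(t)} x\,dx$ of the heavier phase, Newton-type bookkeeping for the Euler system shows $\ddot y(t)$ is controlled from above by $-g$ plus interaction terms with $\p\Omega$ and with the lighter phase. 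The symmetry with respect to $x_1=0$, preserved by the flow because the data and the domain are symmetric, kills the horizontal component and forces the horizontal coordinate of the center of gravity to remain $0$, so the motion is effectively one-dimensional in the vertical variable.

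The second step is to quantify the interaction terms. The pressure/force contributions from $\p\Omega$ are estimated by elliptic regularity for the velocity potential away from the heavier phase, combined with the hypothesis $\int_0^{T_{max}}\int_{\p\Omega}|u^-|^2<\infty$; the vorticity contribution is handled through transport of $\omega^-$ in $L^2(\Omega^-)$, which is conserved up to the (bounded, by assumption) geometry, using the smallness of $\|\omega_0^-\|_{L^2}$. The surface-tension term on $\p\Omega^+$ is controlled by $|\p\Omega^+|$ and the bound on $N(t)$. The outcome of this step should be a closed differential inequality of the form $\frac{d}{dt}\mathcal E(t)\le -c<0$ (or $\ddot h(t)\le -c$ for the altitude $h(t)$ of the center of gravity), valid as long as $\Omega^+(t)$ stays away from $\p\Omega$, where the constant $c$ is extracted from $\rho^+>\rho^-$ and the smallness of the data — this is exactly where all four smallness hypotheses $\|u_0^\pm\|_{L^2}$, $\|\omega_0^-\|_{L^2}$, $h$, $|\p\Omega^+|$ enter, ensuring the dissipative/gravitational term dominates.

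The third step is the conclusion: integrating the differential inequality twice, $h(t)\le h(0)+\dot h(0)\,t-\frac{c}{2}t^2$, and using the smallness of the initial height $h(0)$ and of $|\dot h(0)|$ (bounded by $\|u_0^+\|_{L^2}$), the altitude $h(t)$ of the center of gravity of the heavier phase must reach the level corresponding to $\Omega^+$ touching the bottom graph of $\p\Omega$ at a finite time $T_\star$. Since $\Omega$ has a graph bottom and $\Omega^+$ is symmetric, a center of gravity driven to that critical altitude forces geometric contact (or self-intersection) of $\p\Omega^+$ with $\p\Omega$, contradicting our standing assumption that alternative 2) fails, unless the solution ceased to exist before $T_\star$, which is alternative 1). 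Either way we reach a contradiction, so at $T_{max}:=\min(T_\star,\text{time of breakdown})<\infty$ one of 1), 2), 3) must hold. The main obstacle I anticipate is making the elliptic estimates for the lighter phase robust enough to bound the force of $\p\Omega$ on $\Omega^+$ uniformly in $t$ using only the space–time $L^2(\p\Omega)$ control of $u^-$: the geometry $\Omega^-(t)$ degenerates as contact approaches, so one needs estimates that do not blow up with the shrinking channel, and reconciling this with the assumed finiteness of $N(t)$ (which gives interior control but not obviously the boundary flux) is the delicate point — this is presumably why alternative 3) is included as a separate escape hatch in the statement.
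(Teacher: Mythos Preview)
Your overall framing (contradiction, center of gravity, symmetry reducing to vertical motion) matches the paper's setup, but the core mechanism you propose is wrong and would not close.

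The main gap is your claimed inequality $\ddot h(t)\le -c<0$. From the momentum balance one gets
\[
m_+\,\ddot y(t)=\int_{\partial\Omega^+(t)} p^-\,n\,dl - m_+ g\,e_2,
\]
and the pressure integral on the \emph{moving} interface $\partial\Omega^+(t)$ is not controllable by energy, vorticity, or $N(t)$ alone; in fact for the rigid inclusion treated later in the paper the vertical acceleration is shown to become large and \emph{positive} near contact. So no uniform upper bound $\ddot h\le -c$ is available, and the ``integrate twice to force contact'' step cannot be justified. Relatedly, even if $h(t)$ were driven down, this does not by itself yield contact, since the shape of $\Omega^+(t)$ evolves.

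What the paper actually does is to avoid the moving interface entirely: it rewrites the pressure contribution as integrals on the \emph{fixed} boundary $\partial\Omega$ via Bernoulli along $\partial\Omega$, obtaining after one time integration an identity of the form
\[
-\rho^-\int_{\partial\Omega} x_1\,u^-\!\cdot\tau\,dl \;\ge\; \rho^-\alpha_\Omega\int_0^t\!\!\int_{\partial\Omega}\tfrac{|u^-|^2}{2}\,dl\,ds \;+\;\tfrac12(\rho^+-\rho^-)|\Omega^+|g\,t \;-\;C,
\]
once the contribution of $\partial\Omega\setminus\Gamma_1$ (away from possible contact) is absorbed by an elliptic estimate requiring the smallness of the data. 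With $f(t)=\int_0^t\int_{\partial\Omega}|u^-|\,dl\,ds$ this yields $f'\ge c_1 f^2/t + c_2 t$, hence $f$ blows up at a finite time $T_{max}$. Thus alternative 3) is not an extra hypothesis you may assume finite in the contradiction; it is precisely the output of the differential inequality. The trichotomy in the statement just records the possible ways the smooth evolution can terminate by $T_{max}$: either the norm $N$ blows up, or injectivity is lost, or the surface quantity $\int_0^{T_{max}}\!\int_{\partial\Omega}|u^-|^2\,dl\,dt$ is infinite.
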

 
\begin{remark}
 If we were to assume the density of the material initially inside $\Omega^+$ to be strictly smaller than the density of the material initially in $\Omega^-$, similar theorems would hold, assuming the top of $\p\Omega$ to be under the form of a graph.
 \end{remark}
 
 The cases 1) and 3) obviously involve a blow-up of $N$. We now show in Section 9 that the second case (corresponding to a loss of injectivity) leads to the blow-up of the following norm:

\begin{theorem}
\label{theorem2}
If the case 2) of Theorem \ref{theorem1} is satisfied then
\begin{equation}
\label{criteria}
\lim_{t\rightarrow T_{max}^-} [\|\nabla_\tau\tau\|_{L^\infty(\Gamma(t))}+\int_0^t \|\nabla u^-\|_{L^\infty(\Omega^-(t))}]=\infty\,,
\end{equation}
where $\tau$ denotes the unit tangent on $\p\Omega^+(t)$.
\end{theorem}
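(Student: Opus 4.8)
\emph{Proof idea.} Argue by contradiction: suppose case~2) of Theorem~\ref{theorem1} occurs although
\[
M:=\sup_{0\le t<T_{max}}\Big(\|\nabla_\tau\tau\|_{L^\infty(\Gamma(t))}+\int_0^t\|\nabla u^-\|_{L^\infty(\Omega^-(s))}\,ds\Big)<\infty .
\]
The plan is to show that such a bound prevents the Lagrangian description of the light phase $\Omega^-$ from degenerating, which is incompatible with a contact at $T_{max}$. Let $\eta^-(\cdot,t):\overline{\Omega^-}\to\overline{\Omega^-(t)}$ be the Lagrangian flow map of $u^-$; for $t<T_{max}$ it is a diffeomorphism since $u^-(\cdot,t)\in H^3\hookrightarrow C^1$ on the (then smooth) domain $\Omega^-(t)$. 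Incompressibility gives $\det\nabla\eta^-\equiv 1$, and from $\partial_t\nabla\eta^-=\big((\nabla u^-)\circ\eta^-\big)\nabla\eta^-$ together with $(\nabla\eta^-)^{-1}=\mathrm{cof}(\nabla\eta^-)^{T}$ in two dimensions, Gronwall's inequality yields
\[
\|\nabla\eta^-(\cdot,t)\|_{L^\infty}+\|(\nabla\eta^-(\cdot,t))^{-1}\|_{L^\infty}\le 2e^{M}=:\Lambda\qquad(0\le t<T_{max}).
\]
Hence $\eta^-(\cdot,t)$ is $\Lambda$-bi-Lipschitz from $(\overline{\Omega^-},d_{\Omega^-})$ onto $(\overline{\Omega^-(t)},d_{\Omega^-(t)})$, the $d$'s denoting intrinsic (geodesic) distance; in particular $\mathrm{diam}\,\Omega^-(t)\le\Lambda\,\mathrm{diam}\,\Omega^-$ for the intrinsic metrics. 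Energy conservation — which controls $\|u^-(\cdot,t)\|_{L^2(\Omega^-(t))}$ through the kinetic term, $|\Gamma(t)|$ through the surface-tension term, and the altitude of $\Omega^+(t)$ through the gravity term — then gives $\|u^-(\cdot,t)\|_{L^\infty(\Omega^-(t))}\le C\big(1+\|\nabla u^-(\cdot,t)\|_{L^\infty}\big)\in L^1_t(0,T_{max})$, so $\eta^-(\cdot,t)$ converges uniformly as $t\to T_{max}^-$ to a limit $\eta^-(\cdot,T_{max})$ which is again a $\Lambda$-bi-Lipschitz homeomorphism for the intrinsic metrics.

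Next I would bring in the curvature bound $\|\nabla_\tau\tau\|_{L^\infty(\Gamma(t))}\le M$: it says $\Gamma(t)$ is uniformly of class $C^{1,1}$ with curvature $\le M$, and together with the fixed, regular boundary $\partial\Omega$ and $|\Gamma(t)|\le C$ this makes $\{\Omega^-(t)\}_{0\le t<T_{max}}$ a uniformly regular family. Two consequences are needed. First, $H^3(\Omega^-(t))\hookrightarrow W^{1,\infty}(\overline{\Omega^-(t)})$ with constant depending only on $M$, so the $L^\infty$ norm of $\nabla u^-$ in \eqref{criteria} genuinely controls the Lipschitz constant of $u^-$ up to the boundary and the flow estimates above hold on all of $\overline{\Omega^-(t)}$. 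Second, and this is the geometric core: whenever two portions of $\partial\Omega^-(t)$ — two arcs of $\Gamma(t)$, or an arc of $\Gamma(t)$ and a piece of $\partial\Omega$ — come within Euclidean distance $\delta\ll 1/M$, the $C^{1,1}$ bound forces them to be nearly parallel graphs at scale $\delta$, so the part of $\Omega^-(t)$ trapped between them is a thin slab on which $d_{\Omega^-(t)}$ is comparable, with a constant depending only on $M$, to the Euclidean distance.

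Finally I would close the contradiction. In the wall-contact case and in a transversal self-contact, case~2) produces a sequence $t_n\to T_{max}^-$ and points $P_n=\eta^-(a,t_n),\,Q_n=\eta^-(b,t_n)\in\overline{\Omega^-(t_n)}$ lying on opposite faces of such a thin slab with $|P_n-Q_n|\to0$, hence $d_{\Omega^-(t_n)}(P_n,Q_n)\to0$; but $a,b$ are fixed points of $\overline{\Omega^-}$ at positive intrinsic distance $d_{\Omega^-}(a,b)>0$ — two distinct points of $\Gamma(0)$, or a point of $\Gamma(0)$ and a point of $\partial\Omega$ (here $u^-\cdot n=0$ makes $\eta^-(\cdot,t)$ map $\partial\Omega$ onto itself, while $\Gamma(0)$ lies at positive distance from $\partial\Omega$) — so $d_{\Omega^-(t_n)}(P_n,Q_n)\ge\Lambda^{-1}d_{\Omega^-}(a,b)>0$, a contradiction; in these cases $\eta^-(\cdot,T_{max})$ is moreover a homeomorphism of the closed annular domain $\overline{\Omega^-}$ onto its image, so $\Gamma(T_{max})$ is a Jordan curve disjoint from $\partial\Omega$, against case~2). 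The main obstacle is the geometric input of the previous paragraph: propagating, all the way to the singular time, enough uniform regularity of the moving fluid domain $\Omega^-(t)$ out of the single curvature bound, both to license the flow estimates near $\Gamma(t)$ and to compare intrinsic with Euclidean distances near the forming contact zone. The most delicate case is a self-contact in which $\Omega^+(t)$ itself pinches, so that the collapsing region belongs to $\Omega^+$ rather than $\Omega^-$ and the slab comparison must be supplemented; the natural additional ingredient is the surface-tension and gravity energy bound on $|\Gamma(t)|$, which combined with the smallness hypotheses and the symmetry keeps $\Omega^+(t)$ close to a round drop and thus away from a pinch.
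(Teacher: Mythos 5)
Your proposal follows the same strategy as the paper's Section~\ref{section9}: assume the bound $M<\infty$, control the Lagrangian flow map by a Gronwall argument, and derive a contradiction from two fixed labels $x_0\neq x_1$ whose images coincide at $T_{max}$. The paper runs this more directly: it proves the pointwise Lipschitz estimate (\ref{210417.1}) by the fundamental theorem of calculus along a short path (a step roughly normal to the forming contact plus a step alongside $\Gamma(t)$) and then closes by Gronwall on $|\eta^-(x_0,t)-\eta^-(x_1,t)|^2$, using the first-touching property and the $\nabla_\tau\tau$ control to justify that the tangents at the two limit points agree so the arcs are nearly parallel. Your version packages the same content in bi-Lipschitz estimates for the intrinsic metric $d_{\Omega^-(t)}$; this is equivalent, though the matrix-ODE and cofactor manipulations are not really needed, since the FTC-on-a-path argument already delivers the required Lipschitz constant directly. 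The geometric input is the same in both: $\|\nabla_\tau\tau\|_{L^\infty}\le M$ makes the approaching arcs nearly parallel graphs, so the contact points are joined by a short path \emph{inside} $\Omega^-(t)$, and intrinsic distance compares to Euclidean distance there.

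The one place where you deviate from a complete argument is the same place where the paper is terse: the self-intersection in which the vanishing thin region lies inside $\Omega^+(t)$ rather than inside $\Omega^-(t)$ (the ``pinch-off'' geometry). There, no short path inside $\overline{\Omega^-(t)}$ joins the two approaching points of $\Gamma(t)$; the intrinsic distance $d_{\Omega^-(t)}$ need not go to zero, so neither your bi-Lipschitz contradiction nor the paper's estimate (\ref{210417.1}) applies as written. You identify this scenario correctly, but you do not rule it out --- your remark that the energy bound and symmetry ``keep $\Omega^+(t)$ close to a round drop'' is a heuristic, not a proof. The paper handles the corresponding point by invoking Section~6 of \cite{CoSh2016}, whose construction is set up for the splash geometry (thin region inside $\Omega^-$); so if you want a self-contained version, the pinch-off case needs an additional argument. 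Everything else in your proposal matches the paper's reasoning.
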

 
\begin{remark} 
We therefore have proved that in finite time $N(t)$ blows up in finite time for all situations of Theorem 1.
\end{remark}

 We next consider the case of the rigid body in an inviscid fluid. For this problem we show a small curl guarantees a monotone fall simply by conservation of energy, whereas in the vortex sheet problem with surface tension, there is no guarantee that the fall even occurs (locally in time when the solution is smooth, it can be guaranteed, but not as the singularity forms). In Section 11, the problem is reminded. In Section 12, it is shown the stream function satisfies a specific structure for this problem. In Section 13, the velocity is shown to be non zero before contact and in Section 14, finite-time contact is established:

\begin{theorem}
\label{theorem3}
Let $\Omega$ and $\Omega^s$ be $C^1$ domains satisfying the assumptions of Section 11 (essentially the part of $\p\Omega$  where contact potentially occurs is a graph). Let us assume that $v^s(0)=(0,v^s_2)$, with $v^s_2< 0$, and that $\rho^f<\rho^s$. Let us furthermore assume that the odd (with respect to $x_1$) vorticity satisfies 
\begin{equation}
\label{smallvortex}
\|\omega_0\|^2_{L^2(\Omega^f)}< \min\left(\f{m_s}{\rho^fD_\Omega} |v^s_2(0)|^2,\f{(m_s-\rho^f|\Omega^s|)g}{2\rho^f (CD_\Omega+C)}\right)\,,
\end{equation}
 with $C_\Omega$ being the standard Poincar\'e constant in $H^1_0(\Omega)$, given by (\ref{1403.3}), $D_\Omega$ being given by (\ref{040717.1}) and $C$ being given by (\ref{reg1}). 

 Then there exists $T_{max}\in (0,\infty)$ such that the rigid body will touch $\p\Omega$ at time $T_{max}$ with a finite velocity $v^s_2(T_{max})\le 0$.
 
\end{theorem}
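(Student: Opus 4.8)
The plan is to argue by contradiction from conservation of energy, using the smallness of the vorticity to force the vertical center of mass of the solid to keep descending at a definite rate, which is incompatible with staying strictly above $\p\Omega$ for all time. First I would set up the total energy $\mathcal E(t) = \tfrac12\rho^f\|u\|_{L^2(\oft)}^2 + \tfrac12 m_s|v^s(t)|^2 + m_s g\, b_2^s(t) + \rho^f g \int_{\oft} x_2\,dx$, where $b^s(t)$ is the center of mass of the solid; since the system is conservative (inviscid fluid, $u\cdot n = 0$ on the full boundary $\p\Omega \cup \p\Omega^s$, rigid body coupled through the pressure), $\mathcal E(t) = \mathcal E(0)$ for all $t < T_{max}$, where by Theorem \ref{theorem3}'s hypotheses and the results of Section 12--13 the solution exists and is smooth. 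Rewriting, $\tfrac12 m_s|v^s_2(t)|^2 + m_s g\, b^s_2(t) \le \mathcal E(0) - \rho^f g\, (\text{minimum of } \int x_2)$ up to bounded terms, so if $b^s_2(t)$ were bounded below away from the contact value the kinetic part of the fluid would have to absorb the lost potential energy; the odd symmetry of $\omega_0$ (hence of $\omega(t)$) and the structure of the stream function from Section 12 control $\|u\|_{L^2(\oft)}$ in terms of $\|\omega_0\|_{L^2}$ and the geometry, which is exactly where the first bound $\|\omega_0\|^2 < \tfrac{m_s}{\rho^f D_\Omega}|v^s_2(0)|^2$ enters: it guarantees that at $t = 0$ the solid's downward kinetic energy dominates the fluid's available kinetic energy, so the solid genuinely starts to fall rather than having its energy instantly transferred to the fluid.

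The core of the argument is then a differential inequality for $b^s_2$. Using the equation of motion for the rigid body, $m_s \dot v^s_2 = -(m_s - \rho^f|\Omega^s|)g + (\text{pressure/added-mass terms})$, where the buoyancy-corrected weight $(m_s - \rho^f|\Omega^s|)g > 0$ by the hypothesis $\rho^f < \rho^s$, I would show that the fluid-reaction term is, in absolute value, controlled by a constant times $\|\omega_0\|_{L^2}^2$ plus a term that vanishes as long as there is no contact — here the elliptic/stream-function estimates of Section 12 with the constants $C_\Omega$, $D_\Omega$, $C$ from (\ref{1403.3}), (\ref{040717.1}), (\ref{reg1}) are used, and the second bound in (\ref{smallvortex}), $\|\omega_0\|^2 < \tfrac{(m_s-\rho^f|\Omega^s|)g}{2\rho^f(CD_\Omega + C)}$, is precisely calibrated so that this reaction term is at most half of the net weight. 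Consequently $m_s \dot v^s_2 \le -\tfrac12(m_s - \rho^f|\Omega^s|)g < 0$ for all $t < T_{max}$ as long as no contact has occurred, so $v^s_2(t) \le v^s_2(0) - c t$ with $c = \tfrac{1}{2m_s}(m_s - \rho^f|\Omega^s|)g > 0$, and $b^s_2(t) \le b^s_2(0) + v^s_2(0)t - \tfrac{c}{2}t^2$. Since $v^s_2(0) < 0$, the right-hand side is strictly decreasing and tends to $-\infty$, hence $b^s_2(t)$ would reach the value corresponding to contact with the graph portion of $\p\Omega$ at some finite time; defining $T_{max}$ as the first such contact time (and invoking the global well-posedness of Section 11 / \cite{GlSu2015}, which says the solution persists as long as there is no contact, so the only way existence fails at a finite time is contact), we get $T_{max} < \infty$. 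Finiteness of $v^s_2(T_{max})$, and the sign $v^s_2(T_{max}) \le 0$, follow from the energy bound: $\tfrac12 m_s |v^s_2(t)|^2 \le \mathcal E(0) + (\text{bounded geometric terms})$ uniformly up to $T_{max}$, and $v^s_2$ cannot change sign to become positive because $\dot v^s_2 < 0$ throughout, so $v^s_2$ stays $\le v^s_2(0) < 0$ until it could only possibly approach $0$ from below near contact.

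The main obstacle I anticipate is making the bound on the fluid-reaction (added-mass / pressure) term on the rigid body genuinely uniform up to $T_{max}$, i.e. independent of how close the body is to the boundary. Naively the added-mass coefficients degenerate as the gap closes, which is what makes the problem delicate; the point is that one only needs the estimate to hold while the solid is above the graph, and the \emph{upward} component of the reaction is what must be bounded — and precisely because the body is falling toward a graph (so the geometry of the thin gap is one-sided and controlled), the relevant projection of the pressure force admits a bound in terms of $\|\omega_0\|_{L^2}^2$ and the fixed geometric constants, not in terms of the gap width. Establishing this requires the stream-function decomposition of Section 12 (separating the potential part, which produces only the buoyancy already accounted for, from the vortical correction, whose contribution is $O(\|\omega_0\|_{L^2}^2)$) together with the odd symmetry in $x_1$, which kills the horizontal motion ($v^s_1 \equiv 0$) and simplifies the force balance to the vertical equation alone. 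Once this uniform bound is in hand, the rest is the elementary ODE comparison above.
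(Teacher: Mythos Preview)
Your approach has a genuine gap at precisely the point you flag as the main obstacle. You assert that the fluid pressure-reaction term in the vertical equation of motion can be bounded, uniformly in the gap width, by a constant times $\|\omega_0\|_{L^2}^2$, yielding $m_s\,\dot v^s_2 \le -\tfrac{1}{2}(m_s-\rho^f|\Omega^s|)g<0$ for all $t<T_{max}$. This is false. Theorem~\ref{theorem4} of the paper shows (for $\omega_0=0$, so your proposed $O(\|\omega_0\|^2)$ bound would give nothing but buoyancy) that $\dot v^s_2\to +\infty$, or at least stays strictly positive, as $t\to T_{max}^-$. The decomposition ``potential part produces only buoyancy, vortical correction is $O(\|\omega_0\|^2)$'' is wrong: the potential part carries the unsteady Bernoulli contribution, whose integral over $\partial\Omega^s(t)$ is the added-mass coefficient times $\dot v^s_2$, and that coefficient diverges as the gap closes. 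So your differential inequality for $v^s_2$ cannot hold near contact, and the ODE comparison collapses.

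The paper's argument is structurally different and never attempts to bound $\dot v^s_2$. First, Lemma~\ref{fall} shows only the sign $v^s_2(t)<0$: if $v^s_2$ first vanished at $t_0$, the stream-function estimate (\ref{1403.4}) gives $\|u^f(t_0)\|_{L^2}^2\le D_\Omega\|\omega_0\|_{L^2}^2$, and the first bound in (\ref{smallvortex}) then forces $E(t_0)<E(0)$, contradicting energy conservation. Assuming no contact ever occurs, this monotonicity gives $\int_0^\infty |v^s_2|\,dt\le h<\infty$. The paper then integrates in time the identity (\ref{28.10}) for the center-of-mass acceleration; combining it with the away-from-contact elliptic estimate (\ref{reg1}) and (\ref{1403.4}), one obtains a differential inequality not for $v^s_2$ but for the surface quantity $f(t)=\int_0^t\int_{\partial\Omega}|u^f|\,dl\,dt$, of the shape $f'(t)\ge \tfrac{c}{t}f(t)^2 + c' t - C_1$, which forces $f$ to blow up in finite time exactly as in Section~\ref{section8}. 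The second bound in (\ref{smallvortex}) enters here, in (\ref{0604.6}), to ensure the vorticity term $\rho^f(CD_\Omega+C)\|\omega_0\|^2\, t$ is dominated by the buoyancy term $(\rho^s-\rho^f)|\Omega^s|g\,t$; it is not used to bound the instantaneous force on the body.
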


   The next sections are for $\omega=0$. In Section 15, is established an essential comparison of various norms of the velocity in the fluid by elliptic techniques proper to this problem. In Section 16, we provide a simpler formula for acceleration. In Section 17 is established the blow-up of the  $L^2(\p\Omega)$ norm and in Section 18 is proved the positive or infinite character of acceleration at contact depending on the contact zone.
\begin{theorem}
\label{theorem4}
Let us assume furthermore that $\omega=0$ and that $\Omega$ and $\Omega^s$ are $C^2$. Then, with $T_{max}$ obtained in Theorem \ref{theorem1}, we have the following properties:\hfill\break
\noindent 1) $\displaystyle\lim_{t\rightarrow T_{max}^-}\|u^f\|_{L^2(\p\Omega)}=\infty$,\hfill\break
\noindent 2) $$\displaystyle\lim_{t\rightarrow T_{max}^-}\left|\int_{\p\ost} p n \ dl\right|=\lim_{t\rightarrow T_{max}^-}\f{dv^s_2}{dt}=\infty\,,$$ except for the case where the contact zone between $\p\Omega$ and $\p\Omega^s(T_{max})$ contains a curve of nonzero length, in which case we have $$0<\liminf_{t\rightarrow T_{max}^-} \f{dv^s_2}{dt}\le\limsup_{t\rightarrow T_{max}^-} \f{dv^s_2}{dt}<\infty\,.$$
\end{theorem}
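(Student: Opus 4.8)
The plan is to combine the surface-energy blow-up mechanism (the differential inequality already established for this rigid-body problem) with the elliptic comparison estimates of Section 15 and the acceleration formula of Section 16. The starting point is that finite-time contact occurs at $T_{max}$ by Theorem \ref{theorem3}, and that the relevant surface energy $\mathcal E(t)$ — built from the distance of the contact portion of $\p\Omega^s(t)$ to $\p\Omega$, weighted appropriately — must degenerate as $t\to T_{max}^-$, since the body physically reaches the bottom. For part 1), I would argue that the $L^2(\p\Omega)$ norm of $u^f$ controls, via the trace/elliptic comparison of Section 15 and the Neumann condition $u^f\cdot n = u^s\cdot n$ on $\p\Omega^s(t)$, the flux of fluid being squeezed out of the narrowing gap between the body and the bottom. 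Concretely, as the gap width $\delta(t)\to 0$ while the body descends at speed $|v^s_2(t)|$ bounded below near contact (guaranteed by Section 13 / Theorem \ref{theorem3} giving $v^s_2(T_{max})\le 0$ and the energy argument preventing the fall from stalling), incompressibility forces horizontal fluid velocities in the gap of order $|v^s_2|\,/\,\delta(t)$, whose $L^2$ norm on the relevant part of $\p\Omega$ diverges. The elliptic comparison of Section 15 is exactly what upgrades this heuristic into a rigorous lower bound $\|u^f\|_{L^2(\p\Omega)}\ge c/\sqrt{\delta(t)}$ or similar, which blows up.

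For part 2), the acceleration formula of Section 16 should express $\frac{dv^s_2}{dt}$ (equivalently $-\int_{\p\Omega^s(t)} p\,n\,dl$ by Newton's law for the body, up to the gravity term and the buoyancy term $\rho^f|\Omega^s|g$) in terms of a quadratic form in $u^f$ plus a pressure integral that, after using the Euler equations and integrating by parts, can be written using the stream-function structure of Section 12 and the kinetic energy localized near the gap. The key point is that this expression is, up to controlled terms, comparable to $\int_{\Omega^f(t)} |\nabla u^f|^2$ localized in the gap, or to (time derivative of) the surface energy, and the sign is fixed: the reaction force opposes the downward motion, giving $\frac{dv^s_2}{dt}>0$ near contact. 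The dichotomy then comes from the geometry of the contact set at $T_{max}$: if the contact zone is a single point (or a set of zero length), the gap integral $\int |\nabla u^f|^2 \sim \int |v^s_2|^2/\delta^2 \cdot (\text{gap volume})$ diverges because the gap geometry is like $x_2 \sim c|x_1|^{1+\alpha}$ and the resulting integral is not integrable at the contact point; whereas if the contact zone contains a curve of positive length, the body meets the bottom flatly, the gap has width bounded below by a genuinely one-dimensional squeezing in a fixed-width channel, and the relevant integral stays finite and strictly positive. I would make this quantitative by inserting the graph parametrization of $\p\Omega^s$ near the contact point into the gap estimate and comparing with $\int_{-\epsilon}^{\epsilon} \delta(t,x_1)^{-k}\,dx_1$ for the appropriate power $k$.

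The main obstacle I anticipate is rigorously controlling the pressure integral $\int_{\p\Omega^s(t)} p\,n\,dl$ near contact without using high-order estimates — the pressure solves an elliptic (Neumann) problem in the cusping domain $\Omega^f(t)$, and standard elliptic regularity degenerates badly there. The way around this, consistent with the paper's philosophy, is to avoid pointwise pressure control entirely and instead work with the integrated quantity: multiply the pressure equation by a well-chosen test function (essentially the harmonic extension of the body's vertical displacement field, or a cutoff of the stream function), integrate by parts, and express everything in terms of $L^2$-based energies that the Section 15 comparison and the surface-energy differential inequality already bound from above and below. A secondary delicate point is ruling out that the descent speed $|v^s_2(t)|$ tends to zero too fast near $T_{max}$ in the non-flat case (which would spoil the blow-up of acceleration); here I would use the energy identity to show $|v^s_2(t)|$ is bounded below by a positive constant as $t\to T_{max}^-$ in the point-contact case, using that the net weight $(m_s-\rho^f|\Omega^s|)g>0$ keeps pumping momentum downward faster than the (at that stage still finite) reaction force can absorb until the very instant of contact.
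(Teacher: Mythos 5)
Your high-level physical picture (fluid being squeezed out of a narrowing gap drives a large horizontal boundary velocity, and the geometry of the contact set governs whether the squeezing integral diverges) is on the right track and in fact echoes what the paper does in Section~\ref{section18} when it parametrizes the gap by graphs and inserts the stream-function boundary values. However, there is a genuine and central gap in your reduction: you assume that $|v^s_2(t)|$ is bounded below by a positive constant as $t\to T_{max}^-$, citing Theorem~\ref{theorem3} and ``the energy argument preventing the fall from stalling.'' That inference is false. Theorem~\ref{theorem3} only gives $v^s_2(T_{max})\le 0$, and Lemma~\ref{fall} only gives $v^s_2(t)<0$ strictly; neither prevents $v^s_2(t)\to 0$. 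The paper must explicitly split into the two cases $v^s_2(T_{max})<0$ and $v^s_2(T_{max})=0$ (Sections~\ref{section17}--\ref{section18}), and the latter case is not vacuous: it is realized, e.g., when the rigid body near contact has a graph $x_2\sim|x_1|^{1+\alpha}$ with $\alpha\ge 2$. Your blow-up argument for part 1) (flux $\sim|v^s_2|/\delta$) collapses entirely when $v^s_2\to 0$, and your proposed ``secondary delicate point'' in part 2) --- proving $|v^s_2|$ bounded below in the point-contact case --- is not provable and is in fact the wrong dichotomy.

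Beyond this, the proposal misses the structural device that makes the paper's proof work. Section~\ref{section16} uses the stream-function identities~(\ref{29.01}) and~(\ref{1303.1})--(\ref{1303.2}) together with energy conservation to turn~(\ref{28.10}) into the exact, closed formula~(\ref{1612.5bis}), namely
\[
0=\rho^f\int_{\p\Omega}\f{|u^f|^2}{2}\,n_2\,dl-(m_s-\rho^f|\Omega^s|)g-\f{d}{dt}\left(\f{2E(0)+2(\rho^f-\rho^s)gx^s_2|\Omega^s|}{v^s_2}\right).
\]
This eliminates the pressure and the interior kinetic energy entirely, so that the acceleration is controlled \emph{directly} by $\int_{\p\Omega}|u^f|^2\,n_2\,dl$ plus a constant --- no test-function manipulation of the pressure Neumann problem is needed, contrary to what your plan anticipates. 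From there the case $v^s_2(T_{max})=0$ is handled by integrating~(\ref{1612.5bis}) in time (giving~(\ref{1303.5})), while the case $v^s_2(T_{max})<0$ uses a localized flux computation across a thin region $\omega_\epsilon(t)$ pinched between $\gamma_\epsilon\subset\p\Omega$ and its vertical lift $\gamma_\epsilon(t)\subset\p\Omega^s(t)$, together with~(\ref{en2.b}), to contradict boundedness of $\|u^f\|_{L^2(\p\Omega^f(t))}$. For part 2), the paper derives the lower bound $\int_{\p\Omega}|u^f|^2\,dl\gtrsim 1/(v^s_2)^2$ from the conserved-energy quotient~(\ref{1303.8}) and Cauchy--Schwarz, and the distinction finite/infinite acceleration is then driven by whether the Cauchy--Schwarz constant can be taken proportional to the length of the (almost-)contact arc~(\ref{1303.12}) or whether a two-sided, channel-type estimate~(\ref{190817.13}) together with~(\ref{200817.2}) caps the boundary $L^2$ norm. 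Your sketch identifies roughly the right geometric inputs, but lacks this exact identity and the correct case split on $v^s_2(T_{max})$, which is what makes the proof actually close.
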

 \begin{remark}
 Point 2) shows a drastic difference between the problem of the rigid body in an inviscid fluid and in void, since in the case with void, the acceleration remains constant ($=-g$) for all time even at contact. It shows that the rigid body does feel the imminence of contact and tries to avoid it by an upward acceleration (finite or infinite according to the size of the contact zone) opposing the fall.
 \end{remark}
 
 \begin{remark}
 Any physical model such that $\p\Omega^+(t)$ would have a position controlled in $L^\infty$ norm would be suitable for this theory. 
 \end{remark}

 \section{The vortex sheet problem with surface tension}
\label{section2}

 The vortex sheet problem with surface tension is a moving interface problem  locally in time well-posed from \cite{Am2003}, \cite{AmMa2007} for the irrotational case, and \cite{ChCoSh2008}, \cite{ShZe2008},\cite{ShZe2011} for the case with vorticity. 

Here, $\Omega\subset\mathbb{R}^2$ is a smooth bounded domain of class $H^{\f{9}{2}}$, locally on one side of its boundary,  and $\Omega^+\subset\Omega$ is also a smooth bounded domain of class $H^{\f{9}{2}}$, locally on one side of its boundary, and such that $\overline{\Omega^+}\subset\Omega$. We consider the incompressible Euler equations for the motion of two fluids of densities $\rho^-$ and $\rho^+$ that are at time zero in $\Omega\cap\overline{\Omega^+}^c=\Omega^-$ and $\Omega^+$, with surface tension and gravity effects:
\begin{subequations}
\label{ff1}
\begin{align}
\rho^{\pm} (u_t^\pm + u^\pm\cdot\nabla u^\pm)+\nabla p^\pm=&-\rho^\pm g e_2\,,\ \text{in}\ \Omega^\pm(t)\label{ff1.a}\\
\text{div}\ u^\pm=&0\,,\ \text{in}\ \Omega^\pm(t)\,,\label{ff1.c}\\ 
(p^--p^+)\ n=&-\sigma \nabla_{\tau}(\tau)\,,\ \text{on}\ \p\Omega^+(t)\,,\label{ff1.e}\\
u^-\cdot n=& u^+\cdot n\,,\ \text{on}\ \p\Omega^-(t)\,, \label{ff1.f}\\
u^-\cdot n=& 0\,,\ \text{on}\ \p\Omega\,, \label{ff1.g}\\
\Omega^\pm(0)=&\Omega^\pm\,,\\
u^\pm(x,0)=& u_0^\pm\,,\ \text{in}\ \Omega^\pm\,.
\end{align}
\end{subequations}
where the material interface $\p\Omega^-(t)$ moves with speed $u^-\cdot n=u^+\cdot n$, where $n$ is the outward unit normal to $\oftp$, $\tau$ is the unit tangent, and $e_2$ is the unit vertical vector pointing upward. Also the surface tension coefficient $\sigma$ is classically assumed strictly positive.

If $\eta^\pm$ denote the Lagrangian flow map associated to $u^\pm$, defined by
\begin{align*}
\eta^\pm_t(x,t)=&u^\pm(\eta^\pm(x,t),t)\,,\ \forall x\in \Omega^\pm\,, t\ge 0\,,\\
\eta^\pm(x,0)=&x\,,
\end{align*} we showed in \cite{ChCoSh2008} that the problem is locally in time well-posed for the norm:
\begin{equation}
\label{ff2}
N(t)=\|\eta^-\|_{H^{\f{9}{2}}(\Omega^-)}+\|u^-\|_{H^3(\Omega^-(t))}+\|u^+\|_{H^3(\Omega^+(t))}\,.
\end{equation}

We also define the Lagrangian velocity $v^\pm(x,t)=u^\pm(\eta^\pm(x,t),t)$. 

We will show this problem has a finite-time singularity formation
provided some assumptions are made on the initial domain and data and that $$\rho^+>\rho^-\,.$$ To this end, we will establish that if $N(t)$ stays finite for all time, then either a finite in time contact occurs (either self intersection of $\p\ostp$ or between $\p\ostp$ and $\p\Omega$) or a surface energy blows up. In case of contact, we will show this leads to the blow-up  (\ref{criteria}).

\subsection{Global vector field in $\overline{\Omega}$ extending the normal}

We will need later on a smooth vector field extending the normal to $\p\Omega$ into $\Omega$.

We denote by $n$ the outward unit normal to $\Omega$, and by $\tilde n$ the smooth solution of the elliptic problem:
\begin{subequations}
\label{ch0}
\begin{align}
\triangle \tilde n &=0\,,\ \text{in}\ \Omega\,,\\
\tilde n&=n\,,\ \text{on}\ \partial\Omega\,.
\end{align}
\end{subequations}
By the maximum and minimum principles we have that for each component of $\tilde n$,
\begin{equation}
\label{maximumprinciple}
|\tilde n_i|\le 1\,.
\end{equation}

Given the regularity of $\p\Omega$, we have by elliptic regularity that $\tilde n\in H^3(\Omega)\subset C^{1}(\overline{\Omega})$. 
Therefore,
\begin{equation}
\label{choice2}
\|\nabla \tilde n\|_{L^\infty(\Omega)}\le \beta_{\Omega}<\infty\,.
\end{equation}

We then define the vector field $\tilde\tau=(\tilde n_2,-\tilde n_1)\in H^3(\Omega)$, which extends the tangent to $\p\Omega$ inside $\Omega$.

\section{notations}
\label{section3}
We have $n=(n_1,n_2)$ denote the outer unit normal to $\oftp$, and $\tau=(\tau_1,\tau_2)=(n_2,-n_1)$ denote the unit tangent vector field.

For a smooth domain $A\subset{\R}^2$ we denote by $|A|$ its area and by $|\p A|$ the length of its boundary.

Due to incompressibility, we have for all time of existence $|\Omega^\pm(t)|=|\Omega^\pm|$.  

We also use the Einstein convention of summation with respect to repeated indices and exponents.

 For a given vector $a\in\R^2$, we denote $\displaystyle\nabla_a u=a_i\f{\p u}{\p x_i}\,.$ Of particular interest will be the case when either $a=\tau(x)$, or $a=n(x)$. In that case the divergence of a vector field $u$ written in the $(\tau(x),n(x))$ basis instead of the $(e_1,e_2)$ basis reads as:
\begin{equation}
\label{3.1}
\operatorname{div}u=(\nabla_{\tau(x)} u)\cdot\tau(x)+(\nabla_{n(x)} u)\cdot n(x)\,,
\end{equation}
while the curl reads as:
\begin{equation}
\label{3.2}
\omega=\operatorname{curl}u=(\nabla_{\tau(x)} u)\cdot n(x)-(\nabla_{n(x)} u)\cdot\tau(x)\,.
\end{equation}

Another context in which these derivatives will be encountered is integration along closed curves. For instance, if $\theta$ is a smooth $1-$ periodic parameterization of a closed curve $\gamma$, we have the following properties that will be used extensively:
\begin{equation}
\label{3.4}
\tau(\theta(s))=\f{\f{\p\theta}{\p s}}{\left|\f{\p\theta}{\p s}\right|}(s)\,,
\end{equation}
\begin{equation}
\label{3.3}
\f{\p(u\circ\theta)}{\p s}=\f{\p\theta_i}{\p s}\ \f{\p u}{\p x_i}\circ\theta=\left|\f{\p\theta}{\p s}\right|(\nabla_{\tau} u) (\theta(s))\,,
\end{equation}
\begin{equation}
\int_{\gamma}\nabla_{\tau} u\ dl=\int_0^1 \f{1}{\left|\f{\p\theta}{\p s}\right|} \f{\p(u\circ\theta)}{\p s}\ \underbrace{\left|\f{\p\theta}{\p s}\right|\ ds}_{=dl}=[u\circ\theta]_0^1=0\,.
\end{equation}

\vskip 1cm \section{Conservation of energy}
\label{section4}
For all time of existence it is classical that the quantity
\begin{equation}
\f{1}{2}\sum_{\pm} \rho^\pm \int_{\Omega^\pm(t)} |u^\pm(x,t)|^2\ dx+ \sum_{\pm}\rho^\pm g \int_{\Omega^\pm} \eta^\pm_2\ dx\n\\
+\sigma |\p\Omega^+(t)|\,, 
\label{ff4}
\end{equation}
is independent of time.
Now given that $$\int_{\Omega^\pm} \eta_2^\pm\ dx=\int_{\Omega^\pm(t)}x_2\ dx\,,\ \ \int_{\oftp}x_2+\int_{\ostp} x_2\ dx=\int_\Omega x_2\ dx\,,$$
we then infer from this conservation that the total energy
\begin{align}
E(t)=&\sum_\pm\f{\rho^\pm}{2}\int_{\Omega^\pm(t)} |u^\pm|^2\ dx+ (\rho^+ -\rho^-) g \int_{\ostp} x_2\ dx +\sigma |\p\Omega^+(t)|\n\\
=&\sum_\pm\f{\rho^\pm}{2}\int_{\Omega^\pm} |u^\pm(x,t)|^2 dx+ \underbrace{(\rho^+-\rho^-) }_{\ge 0} g \underbrace{x_2^+(t)}_{\ge 0} |\Omega^+|+\sigma|\p\Omega^+(t)|\,,
\label{ff5}
\end{align}
is constant in time for all time of existence of a smooth solution (namely so long as no eventual collision with the boundary occurs, or that no self-intersection of $\p\ostp$ occurs, and so long as the norm (\ref{ff2}) stays finite) and where we defined
\begin{equation}
\label{ff6}
x^+(t)=\frac{1}{|\Omega^+|}\int_{\ostp} x\ dx=\f{1}{|\Omega^+|}\int_{\Omega^+}\eta^+\ dx\,,
\end{equation}
as the center of gravity of $\ostp$. Tracking the motion of this center of gravity will prove a powerful tool in establishing our finite in time singularity formation result (since any pointwise estimate would be hopeless in a two-phase problem as a cusp forms in $\oftp$ at the time of contact). 

We then have for the velocity of the center of mass that:
\begin{equation}
\label{ff7}
v^+(t)=\frac{1}{|\Omega^+|}\int_{\Omega^+} v^+\ dx=\frac{1}{|\Omega^+|}\int_{\ostp} u^+\ dx\,,
\end{equation}

and for the acceleration:
\begin{equation}
\label{ff8}
a^+(t)=\frac{1}{|\Omega^+|}\int_{\Omega^+} \f{d v^+}{dt}\ dx=\frac{1}{|\Omega^+|}\int_{\ostp} u^+_t+u^+\cdot\nabla u^+\ dx\,,
\end{equation}

Due to (\ref{ff5}) and our definition (\ref{ff7}), we have by Cauchy-Schwarz
\begin{equation}
\label{ff9}
|v^+(t)|^2\le \frac{1}{|\Omega^+|}\int_{\ostp} |u^+|^2\ dx\le \f{2E(0)}{m_+}\,,
\end{equation}
where $m_+=\rho^+ |\Omega^+|$, 
which establishes the uniform in time control of this velocity.

\section{An equivalent formulation of the problem}
\label{section5}
First, using the definition of the curl, we see that:
\begin{equation*}
u^-_1\f{\p u^-_1}{\p x_1}+u^-_2\f{\p u^-_1}{\p x_2}=u^-_1\f{\p u^-_1}{\p x_1}+u^-_2\f{\p u^-_2}{\p x_1}-\omega^- u^-_2
= \f{1}{2}\f{\p |u^-|^2}{\p x_1}-\omega^- u^-_2\,.
\end{equation*}
Similarly,
\begin{equation*}
u^-_1\f{\p u^-_2}{\p x_1}+u^-_2\f{\p u^-_2}{\p x_2}=u^-_1\f{\p u^-_1}{\p x_2}+u^-_2\f{\p u^-_2}{\p x_2}+\omega^- u^-_1
= \f{1}{2}\f{\p |u^-|^2}{\p x_2}+\omega u^-_1\,.
\end{equation*}

Therefore, the Euler equations in $\oftp$ can be written as:
\begin{equation}
\label{e3bis}
\rho^- u^-_t+\nabla\left(\f{\rho^- |u^-|^2}{2}+p^-\right)=-\rho^- g e_2-\omega^- (-u^-_2,u^-_1)\,.
\end{equation}

\section{Choice of initial data}
\label{section6}

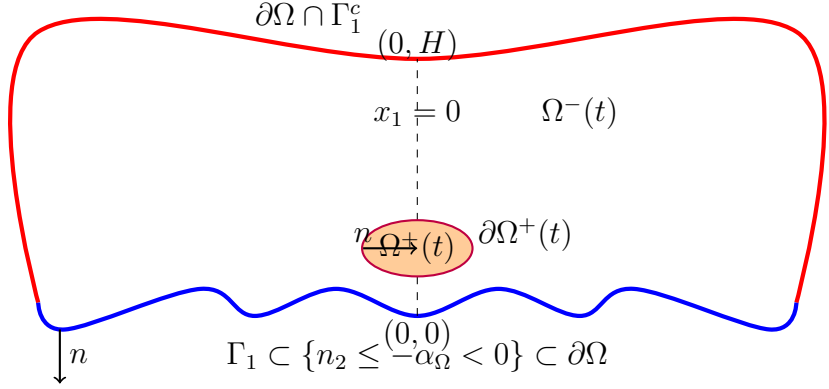
\begin{figure}[h]

\begin{tikzpicture}[scale=0.36]

    \draw[color=blue,ultra thick] plot[smooth,tension=.6] coordinates{( 0,0)(1,-1) (6,0.5) (8,-0.5)(11,0.5) (14, -0.5) (17, 0.5) (20,-0.5)(22,0.5) (27, -1) (28,0) };
    
    \draw[color=red,ultra thick] plot[smooth,tension=.6] coordinates{( 28,0) (28,10) (14,9) (0,10) (0,0)};
\draw[black,dashed] (14,-0.5)-- (14,9);
\draw[purple,ultra thick] (14,2) ellipse (2cm and 1 cm);
\fill[orange!40!white] (14,2) ellipse (2cm and 1 cm);  
\draw (14,2) node { {\large $\Omega^+(t)$}}; 
\draw (18,2.5) node { {\large $\partial\Omega^+(t)$}};
\draw (20,7) node { {\large $\Omega^-(t)$}};
\draw (14,-2) node { {\large $\Gamma_1\subset\{n_2\le -\alpha_\Omega<0\}\subset\partial\Omega$}};
\draw (10,10.5) node { {\large $\partial\Omega\cap\Gamma_1^c$}};
 \draw (14,7) node { {\large $x_1=0$}};   
 \draw[thick,->] (0.8,-1) -- (0.8,-3);
 \draw (1.5,-2) node { {\large $n$}}; 
  \draw[thick,->] (12,2) -- (14,2);
 \draw (12,2.5) node { {\large $n$}};
 \draw (14,-1.2) node { {\large $(0,0)$}}; 
   \draw (14,9.5) node { {\large $(0,H)$}}; 
\end{tikzpicture} 
\caption{ In our convention, $n$ is always exterior to $\Omega^-(t)$ }\label{fig1} 
\end{figure}

We denote by $\Omega$ a domain of class $H^{\f{9}{2}}$ locally on one side of its boundary, which is symmetric with respect to the vertical axis $x_1=0$ and whose boundary $\p\Omega$ is connected. This domain is of height $H>0$ along the vertical axis $x_1=0$, with the bottom point on the vertical axis being $(0,0)$. We also assume that $\Omega$ has a part of its boundary $\Gamma_1$, centered at the origin, with $-L\le x_1\le L$ ($L>0$), under the form of a graph $x_2=f(x_1)$ and thus satisfying
\begin{equation}
\label{graph}
n_2<-\alpha_\Omega<0\,,\ \text{on}\ \Gamma_1\,.
\end{equation}

We then choose $\Omega^+$ such that $\overline{\Omega^+}\subset\Omega$ to be an equally symmetric domain with respect to the vertical axis $x_1=0$, which is of the same regularity class as $\Omega$ and is locally on one side of its boundary. 

We then define the initial fluid domain $\Omega^-=\Omega\cap(\overline{\Omega^+})^c$.

We choose $u^+(0)$ and $u^-(0)$ smooth divergence free velocity fields such that their horizontal component is odd whereas their vertical one is even and satisfying at time $0$, (\ref{ff1.f}) and (\ref{ff1.g}). At time $0$, the center of gravity of $\Omega^+$ is located at $x^+(0)=(0,h)$. Given the symmetry of the initial data with respect to the $x_1=0$ axis, we have that for all time of existence $u^\pm_1(-x_1,x_2,\cdot)=-u^\pm_1(x_1,x_2,\cdot)$,  $u^\pm_2(-x_1,x_2,\cdot)=u^\pm_2(x_1,x_2,\cdot)$.
This implies for the center of gravity of $\ostp$ that $x_1^+(t)=0=v_1^+(t)$.
 This can be seen by setting the fixed-point approach of \cite{ChCoSh2008} in a symmetric setting.
 
 We moreover assume
\begin{equation}
\label{3112.3}
\f{3}{2}|\p\Omega^+|+2 h+2\f{E(0)}{(\rho^+-\rho^-)g|\Omega^+|} +2\f{E(0)}{\sigma} <\min(\f{L}{2},\f{H}{4})\,,
\end{equation}
and 
\begin{equation}
\label{3112.8}
\{(x_1,x_2);\ x_1\in[-\f{L}{2},\f{L}{2}]; f(x_1)\le x_2\le\f{H}{2}\}\subset\Omega\,.
\end{equation}
We also assume:
\begin{equation}
\label{3112.12}
\Gamma_1^c\cap\p\Omega\subset(\p\Omega\cap\{|x_1|\ge L\})\cup (\p\Omega\cap\{x_2\ge \f{H}{2}\})\,,
\end{equation}
where $\Gamma_1$ was defined earlier in this Section.
The first condition can be satisfied by taking the dimensions of the container domain $\Omega$ large relative to $\Omega^+$ and the initial $x^+(0)$, and small square integrable velocities, whereas the second and third ones are conditions on the shape of $\p\Omega$ (if $\Omega$ is for instance of essentially rectangular shape, with four smoothed corners, all these conditions are satisfied).

The conservation of (\ref{ff5}) states:
\begin{equation}
\label{chf1bis}
\sum_\pm\f{\rho^\pm}{2}\int_{\Omega^\pm(t)} |u^\pm|^2\ dx+ (\rho^+-\rho^-)  g x^+_2(t)|\Omega^+|+\sigma |\p\ostp|= E(0)\,. 
\end{equation}
First, (\ref{chf1bis}) shows that 
\begin{equation}
\label{chf3bis}
x^+_2(t)\le \f{E(0)}{(\rho^+-\rho^-)g|\Omega^+|}< \f{H}{8}\,,
\end{equation}
where we used (\ref{3112.3}) to obtain the second inequality. This shows that the center of gravity of $\Omega^+(t)$ stays away from the top of $\partial\Omega$. 
Also (\ref{chf1bis}) shows that
\begin{equation}
\label{0201.1}
\sum_\pm\f{\rho^\pm}{2}\int_{\Omega^\pm(t)} |u^\pm|^2\ dx\le E(0)\,.
\end{equation}

Now, we prove $\ostp$  stays away from the top of $\p\Omega$ and from the lateral sides of $\p\Omega$. Using again (\ref{chf1bis}), we have since $x^+_2(t)\ge 0$ that
\begin{equation}
\label{chf4bis}
|\p\ostp|\le \f{E(0)}{\sigma}\le \min(\f{H}{8},\f{L}{4})\,,
\end{equation}
by using our assumption (\ref{3112.3}). 
Now, let $x^l(t)$ be a point of lowest altitude of $\ostp$ and $x^h(t)$ be a point of highest altitude of $\ostp$. Then, since the straight line from these two points is shorter than any of the two paths along $\p\ostp$ between them, we have:
\begin{equation}
\label{chf6bis}
x^h_2(t)-x^l_2(t)\le |x^h(t)-x^l(t)|\le \f{|\p\ostp|}{2}\le \f{H}{16}\,,
\end{equation}
where we used (\ref{chf4bis}). Thus, 
$$x^h_2(t)\le \f{H}{16}+x^l_2(t)\le \f{H}{16}+x^+_2(t)\,,$$ which with (\ref{chf3bis}) provides
\begin{equation}
\label{chf7bis}
x^h_2(t)\le \f{H}{16}+\f{H}{8}<\f{H}{4}\,.
\end{equation}

By introducing a most on the left point $x^L(t)$ and a most on the right point $x^R(t)$ of $\ostp$, we have similarly:
$$2x^R_1(t)=x^R_1(t)-x^L_1(t)\le \f{|\p\ostp|}{2}\le \f{L}{8}\,,$$ where we used (\ref{chf4bis}). Thus,
\begin{subequations}
\begin{align}
0\le & x^R_1(t)\le \f{L}{16}\,,\label{2812.7a}\\
-\f{L}{16}\le & x^L_1(t)\le 0\,,\label{2812.7b}\,,
\end{align}
\end{subequations}
Propositions (\ref{chf7bis}), (\ref{2812.7a}), (\ref{2812.7b}) then show that for all time of existence
\begin{equation}
\label{3112.9}
\ostp\subset \{(x_1,x_2);\ x_1\in [-\f{L}{16},\f{L}{16}]; f(x_1)\le x_2\le \f{H}{4}\}\subset\Omega\,,
\end{equation}
with our assumption (\ref{3112.8}). Therefore, due to our assumption (\ref{3112.12}), for all time of existence
\begin{equation}
\label{3112.10}
d(\Omega^+(t),\p\Omega\cap \Gamma_1^c)\ge D=\min(\f{H}{4},\f{15L}{16})>0\,,
\end{equation}
where $\Gamma_1\subset\p\Omega$ was defined earlier in this Section as the bottom part of $\p\Omega$ under the form of a graph.

\section{Evolution of the center of gravity of the moving fluid bubble $\ostp$ }
\label{section7}

We have the fundamental equation for the center of mass:
\begin{align}
m_+\ \frac{dv^+}{dt}(t)=&\rho^+ \int_{\Omega^+} \f{dv^+}{dt}(x,t)\ dx\n\\
=&\rho^+ \int_{\Omega^+(t)} u^+_t+u^+\cdot\nabla u^+\ dx\n\\
=&-\int_{\Omega^+(t)} \nabla p^++\rho^+ g e_2\ dx\n\\
=&\int_{\p\Omega^+(t)} p^+\ n\ dl(t)-m_+ g e_2
\label{ef1}\,,
\end{align}
where we remind $n$ is the outer unit normal to $\oftp$, pointing inside $\ostp$, which explains the sign in the boundary integral in (\ref{ef1}). 
Using our boundary condition (\ref{ff1.e}), this provides:
\begin{align}
m_+\ \frac{dv^+}{dt}
=&\int_{\p\ostp}(p^-\ n+\sigma \nabla_{\tau}(\tau)) \ dl(t)-m_+ ge_2\n\\
=&\int_{\p\ostp}p^-\ n \ dl(t)-m_+ ge_2\,.
\label{ef2}
\end{align}
where we used
$$\int_{\p\ostp}\nabla_\tau(\tau)\ dl=0\,,$$ for any closed smooth curve such as $\p\ostp$ (so long as the smooth solution exists). 

By integrating by parts in $\oftp$:
\begin{equation*}
\int_{\oftp}\nabla p^-\ dx=\int_{\p\ostp}p^-\ n\ dl(t)+\int_{\p\Omega} p^-\ n\ dl\,.
\end{equation*}
This provides by substitution in (\ref{ef2}):
\begin{equation*}
m_+\ \frac{dv^+}{dt}=-\int_{\p\Omega}p^-\ n\ dl+\int_{\oftp} \nabla p^-\ dx -m_+ ge_2\,,
\end{equation*}
which with the Euler equations provides:
\begin{align}
m_+\ \frac{dv^+}{dt}=&-\int_{\p\Omega}p^-\ n\ dl-\rho^- \int_{\oftp} u^-_t+u^-\cdot\nabla u^-+ge_2\ dx -m_+ ge_2\,,\n\\
=&-\int_{\p\Omega}p^-\ n\ dl-\rho^- \frac{d}{dt}\int_{\oftp} u^-\ dx -(m_++\rho^- |\Omega^-|) ge_2\,.\label{e2}
\end{align}
Next, we notice that on $\p\Omega$, thanks to (\ref{e3bis}) we have:
\begin{equation}
\label{e7}
\rho^- u^-_t\cdot\tau +\nabla_\tau( p^-+\rho^-\f{|u^-|^2}{2})=-\rho^- ge_2\cdot\tau+\omega^- \underbrace{ u^-\cdot n}_{=0\ \ \text{on}\ \p\Omega}=-\rho^- ge_2\cdot\tau\,.
\end{equation}
We now denote by $\theta:[0,1]\rightarrow\p\Omega$ a $1$-periodic smooth parameterization of $\p\Omega$ with $\theta(0)=(0,H)$. We integrate (\ref{e7}) along $\p\Omega$ between $\theta(0)$ and $\theta(s)$:
\begin{equation*}
[(p^-+\rho^-\f{|u^-|^2}{2})(\theta(\cdot),t)]_0^s=-\int_0^s (\rho^- ge_2\cdot\tau+\rho^- u^-_t\cdot\tau)(\theta(\alpha),t)\underbrace{|\theta'(\alpha)| d\alpha}_{=dl}\,,
\end{equation*}
which implies by integrating (in the $s$ variable) along $\p\Omega$
\begin{align*}
-\int_{\p\Omega}p^-\ n\ dl=& \rho^-\int_{\p\Omega}\f{|u^-|^2}{2}\ n\ dl-(p^-+\rho^-\f{|u^-|^2}{2})(\theta(0),t) \int_{\p\Omega}  n\ dl\\
&+ \int_0^1 \int_0^s \rho^- (ge_2+ u^-_t)\cdot\tau(\theta(\alpha),t)|\theta'(\alpha)| d\alpha\ n(\theta(s))\ {|\theta'(s)|\ ds}\,.
\end{align*}
Since $\p\Omega$ is a closed curve, $\int_{\p\Omega} n\ dl=0$, and thus the previous relation becomes:
\begin{align}
-\int_{\p\Omega}p^-\ n\ dl=& \rho^-\int_{\p\Omega}\f{|u^-|^2}{2}\ n\ dl\n\\
&+ \int_0^1 \int_0^s \rho^- (ge_2+ u^-_t)\cdot\tau(\theta(\alpha),t)|\theta'(\alpha)| d\alpha\ n(\theta(s))\ |\theta'(s)|\ ds\,.\label{e8}
\end{align}
We now substitute (\ref{e8}) into (\ref{e2}), leading to:
\begin{align}
m_+\ \frac{dv^+}{dt}
=&-\rho^- \frac{d}{dt}\int_{\oftp} u^-\ dx -(m_++\rho^- |\Omega^-|) ge_2+\rho^-\int_{\p\Omega}\f{|u^-|^2}{2}\ n\ dl\n\\
&+ \int_0^1 \int_0^s (\rho^- ge_2\cdot\tau+\rho^- u^-_t\cdot\tau)(\theta(\alpha),t)|\theta'(\alpha)| d\alpha\ n(\theta(s))\ |\theta'(s)|\ ds\,.\label{e9}
\end{align}
We now write in a much simpler way the fourth term on the right-hand side of this equation. In order to do so, we define $f(x)=x_2$, so that $\nabla f=e_2$ and $\nabla_\tau f=e_2\cdot\tau$. Therefore,
\begin{equation}
\label{e10}
f(\theta(s))=f(\theta(0))+\int_0^s \underbrace{e_2\cdot\tau(\theta(\alpha))}_{\nabla_\tau f (\theta(\alpha))}\underbrace{|\theta'(\alpha)|\ d\alpha}_{dl}\,.
\end{equation}
Next, since
\begin{equation}
\label{e11}
\int_\Omega e_2\ dx=\int_\Omega \nabla f\ dx=\int_{\p\Omega} f\ n\ dl\,,
\end{equation}
substituting (\ref{e10}) in (\ref{e11}) provides (using $f(\theta(0))\int_{\p\Omega} n\ dl=0$):
\begin{equation}
\label{e12}
\int_\Omega e_2\ dx=\int_0^1 \int_0^s e_2\cdot\tau (\theta(\alpha))|\theta'(\alpha)| d\alpha\ n(\theta(s))\ |\theta'(s)|\ ds\,.
\end{equation}
Using (\ref{e12}) in (\ref{e9}) then yields:
\begin{align}
m_+\ \frac{dv^+}{dt}
=&\rho^-\int_{\p\Omega}\f{|u^-|^2}{2}\ n\ dl-\rho^- \frac{d}{dt}\int_{\oftp} u^-\ dx -(m_++\rho^- |\Omega^-|-\rho^-|\Omega|) ge_2\n\\
&+ \int_0^1 \int_0^s \rho^- u^-_t\cdot\tau(\theta(\alpha),t)|\theta'(\alpha)| d\alpha\ n(\theta(s))\ |\theta'(s)|\ ds\n\\
=&\rho^-\int_{\p\Omega}\f{|u^-|^2}{2}\ n\ dl-\rho^- \frac{d}{dt}\int_{\oftp} u^-\ dx -(m_+-\rho^-|\Omega^+|) ge_2\n\\
&+ \f{d}{dt}\int_0^1 \int_0^s \rho^- u^-\cdot\tau(\theta(\alpha),t)|\theta'(\alpha)| d\alpha\ n(\theta(s))\ |\theta'(s)|\ ds
\,.\label{e13}
\end{align}
Defining
\begin{equation}
\label{e14}
F(t)=\int_0^1 \int_0^s \rho^- u^-\cdot\tau(\theta(\alpha))|\theta'(\alpha)| d\alpha\ n(\theta(s))\ |\theta'(s)|\ ds\,,
\end{equation}
we have by integrating(\ref{e13}) in time
\begin{align}
F_2(t)=& m_s v_2^+(t)+\rho^-\int_{\oftp} u^-_2 dx -\rho^- \int_0^t \int_{\p\Omega}\f{|u^-|^2}{2}n_2 dl dt+\underbrace{(\rho^+-\rho^-)}_{> 0}|\Omega^+| gt\n\\
& +\underbrace{F_2(0)-m_+ v_2^+(0)-\rho^-\int_{\Omega^-} u^-_2(\cdot,0)\ dx}_{C_0}\,.
\label{e15}
\end{align} 

\begin{remark}
We notice the computations leading to (\ref{e15}) came purely from using the incompressible Euler equations with gravity in $\oftp$ in the relation (\ref{ef2}), and are valid for any law governing the phase $\Omega^+(t)$, including the case of the rigid body considered later in this paper.
\end{remark}
 
We next rewrite $F_2$ in a simpler way. 

First, since $n_2=\tau_1$, we have
\begin{equation*}
F_2(t)=\int_0^1 \int_0^s \rho^- u^-\cdot\tau(\theta(\alpha))|\theta'(\alpha)| d\alpha\ \tau_1(\theta(s))\ |\theta'(s)|\ ds\,,
\end{equation*}
which by integration by parts provides:
\begin{equation}
\label{28.01}
F_2(t)=-\int_0^1 \rho^- u^-\cdot\tau(\theta(s))|\theta'(s)| \ \int_0^s \tau_1(\theta(\alpha))\ |\theta'(\alpha)|d\alpha\ ds\,.
\end{equation}
Note here that we used the fact that $\int_{\p\Omega} \tau_1\ dl=0$.

Moreover, in the same way as we obtained (\ref{e10}), this time for 
 $f(x)=x_1$, so that $\nabla f=e_1$ and therefore $\nabla_\tau f=e_1\cdot\tau$, we have
\begin{equation}
\label{28.02}
f(\theta(s))=f(\theta(0))+\int_0^s \underbrace{e_1\cdot\tau(\theta(\alpha))}_{\nabla_\tau f (\theta(\alpha))}\underbrace{|\theta'(\alpha)|\ d\alpha}_{dl}\,,
\end{equation}
which by substitution in (\ref{28.01}) provides
\begin{equation*}
F_2(t)=-\int_0^1 \rho^- u^-\cdot\tau(\theta(s))|\theta'(s)| \ (\theta_1(s)-\theta_1(0))\ ds\,.
\end{equation*}
Using  $\theta_1(0)=0$, this yields
\begin{align}
F_2(t)=&-\int_0^1 \rho^- u^-\cdot\tau(\theta(s))|\theta'(s)| \ \theta_1(s)\ ds\n\\
=&-\int_{\p\Omega}\rho^- x_1\ u^-\cdot\tau\ dl\label{28.03}\,.
\end{align}
Substituting (\ref{28.03}) in (\ref{e13}) provides
\begin{align}
m_s\ \frac{dv^+_2}{dt}
=&\rho^-\int_{\p\Omega}\f{|u^-|^2}{2}\ n_2\ dl-\rho^- \frac{d}{dt}\int_{\oftp} u^-_2\ dx -(m_+-\rho^-|\Omega^+|) g\n\\
&- \f{d}{dt}\int_{\p\Omega}\rho^- x_1\ u^-\cdot\tau\ dl
\,.\label{28.10}
\end{align} 

\section{Finite-time singularity formation for the vortex sheet problem with surface tension}
\label{section8}

We note that from our energy conservation (\ref{chf1bis}) and (\ref{ff9}), the first and second terms on the right-hand side of (\ref{e15}) are controlled for all time of existence by a constant independent of time, while the fourth term is linear in time. We now address the question of the third term, which is not sign definite across $\p\Omega$, due to the presence of $n_2$. 

We remind that our assumptions from Section \ref{section6} imply that we can split $\p\Omega$ into the graph $\Gamma_1$, centered on the vertical axis $x_1=0$, below the (potentially) falling moving body in the fluid, and where $n_2\le -\alpha_\Omega<0$ and its complementary, where we will show the integral is small relative to the fourth term of (\ref{e15}).

From (\ref{e15}) we infer:
\begin{align}
F_2(t)\ge & m_+ v_2^+(t)+\rho^-\int_{\oftp} u^-_2\ dx -\rho^- \int_0^t \int_{\Gamma_1^c\cap\p\Omega}\f{|u^-|^2}{2}n_2\ dl\ dt\n\\
&+{\rho^-}\alpha_\Omega\int_0^t \int_{\Gamma_1}\f{|u^-|^2}{2}\ dl\ dt+\underbrace{(\rho^+-\rho^-)}_{> 0}|\Omega^+| gt+C_0\n\\
\ge & m_+ v_2^+(t)+\rho^-\int_{\oftp} u^-_2\ dx \ dx +C_0\n\\
&-\rho^- \int_0^t \int_{\Gamma_1^c\cap\p\Omega}\f{|u^-|^2}{2}n_2\ dl\ dt+\f{1}{4}{(\rho^+-\rho^-)}|\Omega^+| gt\n\\
&+{\rho^-}\alpha_\Omega\int_0^t \int_{\Gamma_1}\f{|u^-|^2}{2}\ dl\ dt+\f{3}{4}{(\rho^+-\rho^-)}|\Omega^+| gt
,.\label{e16}
\end{align}

We will prove later on that for initial height $h$ and initial velocities satisfying (\ref{3112.1}) stated later,
\begin{equation}
\label{e17}
(1+\alpha_\Omega) \left|\rho^- \int_0^t \int_{\Gamma_1^c\cap\p\Omega}{|u^-|^2}\ dl\ dt\right|\le \f{1}{4}(\rho^+-\rho^-)|\Omega^+| gt\,.
\end{equation}
Using this property (\ref{e17}), we have by $\p\Omega=\Gamma_1\cup(\Gamma_1^c\cap\p\Omega)$:

\begin{equation*}
 {\rho^-}\alpha_\Omega\int_0^t \int_{\Gamma_1}\f{|u^-|^2}{2}\ dl\ dt+\f{1}{8}(\rho^+-\rho^-)|\Omega^+| gt \ge {\rho^-}\alpha_\Omega\int_0^t \int_{\p\Omega}\f{|u^-|^2}{2}\ dl\ dt \,.
\end{equation*}
and thus
\begin{equation}
\label{e18}
 {\rho^-}\alpha_\Omega\int_0^t \int_{\Gamma_1}\f{|u^-|^2}{2}\ dl\ dt+\f{1}{4}(\rho^+-\rho^-)|\Omega^+| gt \ge {\rho^-}\alpha_\Omega\int_0^t \int_{\p\Omega}\f{|u^-|^2}{2}\ dl\ dt \,.
\end{equation}
Using again (\ref{e17}), we also have
\begin{equation}
\label{e19}
-\rho^- \int_0^t \int_{\Gamma_1^c\cap\p\Omega}\f{|u^-|^2}{2}n_2\ dl\ dt + \f{1}{4}(\rho^+-\rho^-)|\Omega^+| gt\ge 0\,.
\end{equation}
Using (\ref{e18}) and (\ref{e19}) in (\ref{e16}), we infer that
\begin{align}
F_2(t)\ge & m_+ v_2^+(t)+\rho^-\int_{\oftp} u^-_2 dx 
+{\rho^-}\alpha_\Omega \int_0^t \int_{\p\Omega}\f{|u^-|^2}{2} dl dt\n\\
&+\f{\rho^+-\rho^-}{2}|\Omega^+|gt +{C_0}\,.\label{e20}
\end{align}
On the other hand, given (\ref{28.03}) for $F$, we have the existence of $\tilde C_{\Omega}>0$ (depending on $\Omega$) such that
\begin{equation}
\label{e21}
|F(t)|\le \tilde C_\Omega \rho^- \int_{\p\Omega} |u^-|(\cdot,t)\ dl\,.
\end{equation}
Using (\ref{e21}) in (\ref{e20}), we obtain:
\begin{align}
 \tilde C_\Omega \rho^- \int_{\p\Omega} |u^-|(\cdot,t) dl \ge & m_+ v_2^+(t)+\rho^-\int_{\oftp} u^-_2\ dx  +{\rho^-} \alpha_\Omega\int_0^t \int_{\p\Omega}\f{|u^-|^2}{2}\ dl\ dt\n\\
 &+\f{\rho^+-\rho^-}{2}|\Omega^+| gt- |C_0| \,.\label{e22}
\end{align}
By Cauchy-Schwarz, this implies
\begin{align}
\tilde C_\Omega \rho^- \int_{\p\Omega} |u^-|(\cdot,t)\ dl \ge & m_+ v_2^+(t)+\rho^-\int_{\oftp} u^-_2\ dx - |C_0|\n\\
& +\f{\rho^-\alpha_\Omega}{2t|\p\Omega|} \left(\int_0^t \int_{\p\Omega}{|u^-|}\ dl dt\right)^2 +\f{1}{2}(\rho^+-\rho^-)|\Omega^+| gt\,.\label{e23}
\end{align}
Using our energy bounds (\ref{0201.1}) and (\ref{ff9}) in (\ref{e23}), we have:
\begin{align}
 \tilde C_\Omega \rho^- \int_{\p\Omega} |u^-|(\cdot,t)\ dl \ge & - m_+ \sqrt{\f{2E(0)}{m_+}}- \rho^- \sqrt{\f{2E(0)}{\rho^-}}\sqrt{|\Omega^-|}- |C_0|\n\\
& +\f{\rho^-\alpha_\Omega}{2t|\p\Omega|} \left(\int_0^t \int_{\p\Omega}{|u^-|}\ dl\ dt\right)^2 +\f{1}{2}(\rho^+-\rho^-)|\Omega^+| gt\,.\label{e23bis}
\end{align}

Let 
\begin{equation}
\label{e24}
f(t)=\int_0^t \int_{\p\Omega}{|u^-|}\ dl\ dt\,.
\end{equation}
From (\ref{e23bis}), we have that with
\begin{equation}
\label{e24bis}
t_0=\f{4}{g|\Omega^+|(\rho^+-\rho^-)} \left(  \sqrt{{2E(0)}{m_+}}+  \sqrt{2E(0)\rho^-|\Omega^-|}+  |C_0|\right)\,,
\end{equation} 
for all $t\ge t_0$, (\ref{e23bis}) implies:
\begin{equation*}
\tilde C_\Omega\rho^- f'(t)\ge \f{\rho^-\alpha_\Omega}{2t|\p\Omega|}f^2+ \f{1}{4}(\rho^+-\rho^-)|\Omega^+| gt>0\,.
\end{equation*}
Therefore, for all $t\ge t_0$, $f(t)>0$ and 
\begin{equation}
\label{e25}
\f{f'}{f^2} \ge \f{\alpha_\Omega}{2|\p\Omega|\tilde C_\Omega t}\,,
\end{equation}
which by integration from $t_0$ to $t\ge t_0$ provides:
$$-\f{1}{f(t)}+\f{1}{f(t_0)}\ge \f{\alpha_\Omega}{2|\p\Omega|\tilde C_\Omega }\ln(\f{t}{t_0})\,.
$$
Therefore,
$$0< \f{1}{f(t)}\le \f{1}{f(t_0)}- \f{\alpha_\Omega}{2|\p\Omega|\tilde C_\Omega }\ln(\f{t}{t_0})\,,
$$
which shows that for $t\ge t_0\ e^{\f{2|\p\Omega|\tilde C_\Omega}{\alpha_\Omega f(t_0)}}$, we have
$$0<\f{1}{f(t)}\le 0\,,$$ which is an obvious impossibility. Therefore, the maximal time of existence of a smooth solution $T_{max}>0$ satisfies
\begin{equation}
\label{e26}
T_{max}\le t_0\ e^{\f{2|\p\Omega|\tilde C_\Omega}{\alpha_\Omega f(t_0)}}\,.
\end{equation}

We now have to turn back to proving our missing estimate (\ref{e17}) (provided our initial data satisfy (\ref{3112.1})), which controls the velocity on $\Gamma_1^c$ (that we are sure the moving bubble $\ostp$ stays away from, given (\ref{3112.10})). Here the difficulty is to get the precise bound given by (\ref{e17}) and not just a generic constant, or a constant greater than the majorant of (\ref{e17}), and it calls for subtle observations of elliptic and geometric natures.

Our starting point is the fact that $u^-$ being divergence free
\begin{equation}
\label{e30}
u^-=\nabla^\perp\phi\,,\text{in}\ \Omega^+(t)\,,
\end{equation}
with 
\begin{equation}
\label{e31}
\phi=0\,,\text{on}\ \p\Omega\,,
\end{equation}
(we will not need the condition $\nabla^\perp\phi\cdot n=u^+\cdot n$ on $\p\Omega^+(t)$)
and
\begin{equation}
\label{e32}
\Delta\phi=\omega^-\,,\text{in}\ \Omega^f(t)\,.
\end{equation}
We will also need the fact that 
\begin{equation}
\|\nabla\phi\|^2_{L^2(\oftp)}=\|u^f\|^2_{L^2(\oftp)}\le 2 \f{E(0)}{\rho^-} \,,
\end{equation} 
due to(\ref{0201.1}). 
\begin{figure}[h]

\begin{tikzpicture}[scale=0.36]

    \draw[color=blue,ultra thick] plot[smooth,tension=.6] coordinates{( 0,0)(1,-1) (6,0.5) (8,-0.5)(11,0.5) (14, -0.5) (17, 0.5) (20,-0.5)(22,0.5) (27, -1) (28,0) };
    
    \draw[color=red,ultra thick] plot[smooth,tension=.6] coordinates{( 28,0) (28,10) (14,9) (0,10) (0,0)};
\draw[black,dashed] (14,-0.5)-- (14,9);
\draw[purple,ultra thick] (14,1) ellipse (1cm and 0.5 cm);
\fill[orange!40!white] (14,1) ellipse (1cm and 0.5 cm);  
\draw (14,1) node { { $\Omega^+(t)$}}; 
\draw (16,1.5) node { { $\partial\Omega^+(t)$}};
\draw (20,7) node { {\large $\Omega_2^\alpha\subset\Omega^-(t)$}};
\draw (2,5.6) node { { \large $\Gamma_2^\alpha$}}; 
\draw (14,-2) node { {\large $\Gamma_1\subset\{n_2\le -\alpha_\Omega<0\}\subset\partial\Omega$}};
\draw (10,10.5) node { {\large $\partial\Omega\cap\Gamma_1^c$}};
 \draw (14,7) node { {\large $x_1=0$}};   
 \draw[thick,->] (0.8,-1) -- (0.8,-3);
 \draw (1.5,-2) node { {\large $n$}}; 
  \draw (14,-1.2) node { {\large $(0,0)$}}; 
   \draw (14,9.5) node { {\large $(0,H)$}}; 
    \draw[color=purple, ultra thick] (-1,5)--(29,5); 
\end{tikzpicture} 
\caption{ $\Omega^+(t)$ stays below $\Gamma_2^\alpha$ for $\alpha>\f{H}{4}$}\label{fig1.bis} 
\end{figure}
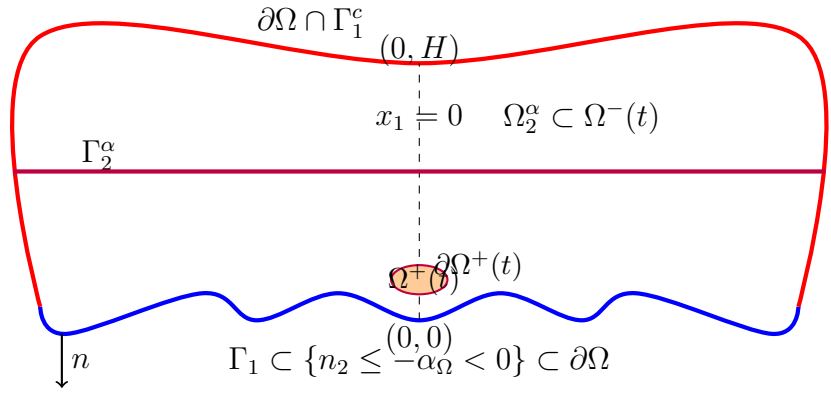
We now define for any $\alpha>\f{H}{4}$
\begin{equation}
\label{e33}
\Omega_{2}^{\alpha}=\{x\in\Omega;\ x_2\ge \alpha\}\,,
\end{equation}
and 
\begin{equation}
\label{e34}
\Gamma_{2}^{\alpha}=\{x\in\Omega;\ x_2= \alpha\}\,.
\end{equation}

From our relation (\ref{3112.9}), we know that for $\alpha> \f{H}{4}$, $\Omega_{2}^{\alpha}$ does not intersect $\Omega^+(t)$ for all time of existence, and we will work with such values of $\alpha$ in what follows.

Taking $\tilde n$ as being our global vector field extending $n$ into $\Omega$ defined in Section \ref{section2}, we now take $|\tilde n|^2\nabla_{\tilde n}\phi$ as test function for (\ref{e32}) and integrate the relation in $\Omega_{2}^{\alpha}$.

We first notice that for any unit vector $a$, if $b=a^\perp$, we have:
$$\omega^-=\Delta\phi=a_i a_j \f{\p^2\phi}{\p x_i\p x_j} +b_i b_j \f{\p^2\phi}{\p x_i\p x_j}\,.$$
Therefore, for any vector $a$, if $b=a^\perp$,
$$|a|^2\omega^-=|a|^2 \Delta\phi=a_i a_j \f{\p^2\phi}{\p x_i\p x_j} +b_i b_j \f{\p^2\phi}{\p x_i\p x_j}\,.$$
We now simply use this expansion of $\Delta$ in the orthogonal $(\tilde\tau(x),\tilde n(x))$ basis at each point $x\in\oftp$, to get
\begin{equation}
\label{2812.8}
|\tilde n(x)|^2\omega^-=|\tilde n(x)|^2\Delta\phi= \ttau_i(x)\ttau_j(x) \f{\p^2\phi}{\p x_i\p x_j}(x,t) +\tn_i(x) \tn_j(x) \f{\p^2\phi}{\p x_i\p x_j}(x,t)\,.
\end{equation}
Integration by parts in $\Omega_{2}^{\alpha}\subset\oftp$ (and remembering that the normal exterior vector to $\Gamma_{2}^{\alpha}$ is $-e_2$ and to $\p\Omega$ is $n=\tilde n$) provides us with
\begin{align*}
\int_{\Omega_{2}^{\alpha}} |\tilde n|^2\omega^-\nabla_{\tn}\phi dx&=\int_{\Omega_{2}^{\alpha}} (\ttau_i\ttau_j \f{\p^2\phi}{\p x_i\p x_j} +\tn_i \tn_j \f{\p^2\phi}{\p x_i\p x_j})\tn_k \f{\p\phi}{\p x_k}\ dx\\
&=-\underbrace{\int_{\Omega_{2}^{\alpha}}  \f{\p\phi}{\p x_j} \f{\p(\ttau_i\ttau_j \tn_k \f{\p\phi}{\p x_k})}{\p x_i} dx}_{I_1}- \underbrace{\int_{\Omega_{2}^{\alpha}} \f{\p\phi}{\p x_j} \f{\p (\tn_i \tn_j \tn_k \f{\p\phi}{\p x_k})}{\p x_i} dx}_{I_2}\\
&-\int_{\Gamma_{2}^{\alpha}} \f{\p\phi}{\p x_j} \ttau_2 \ttau_j \tn_k \f{\p\phi}{\p x_k}  + \tn_2 \tn_j \tn_k \f{\p\phi}{\p x_j} \f{\p\phi}{\p x_k}\ dx_1\\
&+\int_{\p\Omega_{2}^{\alpha}\cap\p\Omega} \f{\p\phi}{\p x_j} \underbrace{\ttau_i \tn_i}_{=0} \ttau_j \tn_k \f{\p\phi}{\p x_k} + \f{\p\phi}{\p x_j} \underbrace{\tn_i n_i}_{=1\ \text{on}\ \p\Omega} \tn_j {\tn_k \f{\p\phi}{\p x_k}}dl\,.
\end{align*}
Thus,
\begin{align}
I_1+I_2=&-\int_{\Gamma_{2}^{\alpha}} \f{\p\phi}{\p x_j} \ttau_2 \ttau_j \tn_k \f{\p\phi}{\p x_k}  + \tn_2 \tn_j \tn_k \f{\p\phi}{\p x_j}\f{\p\phi}{\p x_k}\ dx_1\n\\
&+\int_{\p\Omega_{2}^{\alpha}\cap\p\Omega} |\nabla_n\phi|^2\ dl-\int_{\Omega_{2}^{\alpha}} |\tilde n|^2\omega^-\nabla_{\tn}\phi\ dx\,.
\label{e35}
\end{align}
We next rewrite $I_1$ and $I_2$. From the definition,
\begin{align}
I_2=&\int_{\Omega_{2}^{\alpha}} \tn_i\tn_j\f{\p\phi}{\p x_j} \f{\p ( \tn_k \f{\p\phi}{\p x_k})}{\p x_i}\ dx+\int_{\Omega_{2}^{\alpha}} \f{\p(\tn_i\tn_j)}{\p x_i} \f{\p\phi}{\p x_j}  \tn_k \f{\p\phi}{\p x_k}\ dx\n\\
=&\f{1}{2} \int_{\Omega_{2}^{\alpha}} \tn_i\f{\p |\nabla_{\tn}\phi|^2 }{\p x_i}\ dx+\int_{\Omega_{2}^{\alpha}} \f{\p(\tn_i\tn_j)}{\p x_i} \f{\p\phi}{\p x_j}  \tn_k \f{\p\phi}{\p x_k}\ dx\n\\
=&- \int_{\Omega_{2}^{\alpha}} \f{\p \tn_i}{\p x_i} \f{|\nabla_{\tn}\phi|^2}{2} dx+
\int_{\p\Omega_{2}^{\alpha}\cap \p\Omega} \underbrace{|\tn|^2}_{=1\ \text{on}\ \p\Omega} \f{|\nabla_{\tn}\phi|^2}{2} dl-\int_{\Gamma_{2}^{\alpha}} \tn_2\f{ |\nabla_{\tn}\phi|^2 }{2} dx_1\n\\
&+\int_{\Omega_{2}^{\alpha}} \f{\p(\tn_i\tn_j)}{\p x_i} \f{\p\phi}{\p x_j}  \tn_k \f{\p\phi}{\p x_k}\ dx\,.\label{e36}
\end{align}
We next move to $I_1$:
\begin{align}
I_1=&\int_{\Omega_{2}^{\alpha}}  \ttau_i\ttau_j\tn_k \f{\p\phi}{\p x_j} \f{\p^2\phi}{\p x_k\p x_i}\ dx+ \int_{\Omega_{2}^{\alpha}}  \f{\p(\ttau_i\ttau_j\tn_k)}{\p x_i} \f{\p\phi}{\p x_j} \f{\p\phi}{\p x_k}\ dx\n\\
=&\int_{\Omega_{2}^{\alpha}}  \ttau_j\f{\p\phi}{\p x_j} \tn_k\f{\p(\ttau_i \f{\p\phi}{\p x_i})}{\p x_k} dx-\int_{\Omega_{2}^{\alpha}}  \ttau_j\f{\p\phi}{\p x_j} \tn_k\f{\p\ttau_i}{\p x_k}\f{\p\phi}{\p x_i} dx\n\\
&+ \int_{\Omega_{2}^{\alpha}}  \f{\p(\ttau_i\ttau_j\tn_k)}{\p x_i} \f{\p\phi}{\p x_j} \f{\p\phi}{\p x_k} dx\n\\
=&\int_{\Omega_{2}^{\alpha}}   \f{\tn_k}{2}\f{\p |\nabla_{\ttau}\phi|^2}{\p x_k} dx-\int_{\Omega_{2}^{\alpha}}  \ttau_j\f{\p\phi}{\p x_j} \tn_k\f{\p\ttau_i}{\p x_k}\f{\p\phi}{\p x_i} dx\n\\
&+ \int_{\Omega_{2}^{\alpha}}  \f{\p(\ttau_i\ttau_j\tn_k)}{\p x_i} \f{\p\phi}{\p x_j} \f{\p\phi}{\p x_k} dx\n\\
=&-\int_{\Omega_{2}^{\alpha}}   \f{\p \tn_k}{\p x_k}  \f{|\nabla_{\ttau}\phi|^2}{2}\ dx
+\int_{\p\Omega_{2}^{\alpha}\cap\p\Omega} \f{|\tn|^2 }{2} \underbrace{|\nabla_{\ttau}\phi|^2}_{=0\ \text{on}\ \p\Omega}\ dl
-\int_{\Gamma_{2}^{\alpha}}   \f{\tn_2}{2}|\nabla_{\ttau}\phi|^2\ dx_1\n\\
&-\int_{\Omega_{2}^{\alpha}}  \ttau_j\f{\p\phi}{\p x_j} \tn_k\f{\p\ttau_i }{\p x_k}\f{\p\phi}{\p x_i}\ dx+ \int_{\Omega_{2}^{\alpha}}  \f{\p(\ttau_i\ttau_j\tn_k)}{\p x_i} \f{\p\phi}{\p x_j} \f{\p\phi}{\p x_k}\ dx\,.\label{e37}
\end{align}
By gathering (\ref{e35}), (\ref{e36}) and (\ref{e37}) we obtain:
\begin{align}
\int_{\p\Omega_{2}^{\alpha}\cap\p\Omega}  \f{ |\nabla_{n}\phi|^2}{2} dl=&-\f{1}{2}\int_{\Omega_{2}^{\alpha}}   \f{\p \tn_k}{\p x_k}  |\nabla_{\ttau}\phi|^2\ dx-\f{1}{2}\int_{\Gamma_{2}^{\alpha}}   \tn_2|\nabla_{\ttau}\phi|^2\ dx_1\n\\
&-\int_{\Omega_{2}^{\alpha}}  \ttau_j\f{\p\phi}{\p x_j} \tn_k\f{\p\ttau_i }{\p x_k}\f{\p\phi}{\p x_i} dx+ \int_{\Omega_{2}^{\alpha}}  \f{\p(\ttau_i\ttau_j\tn_k)}{\p x_i} \f{\p\phi}{\p x_j} \f{\p\phi}{\p x_k} dx\n\\
&-\f{1}{2} \int_{\Omega_{2}^{\alpha}} \f{\p \tn_i}{\p x_i} |\nabla_{\tn}\phi|^2 -\f{1}{2}\int_{\Gamma_{2}^{\alpha}} \tn_2 |\nabla_{\tn}\phi|^2 \ dx_1\n\\
&+\int_{\Omega_{2}^{\alpha}} \f{\p(\tn_i\tn_j)}{\p x_i} \f{\p\phi}{\p x_j}  \tn_k \f{\p\phi}{\p x_k}\ dx+\int_{\Omega_{2}^{\alpha}} |\tilde n|^2\omega^-\nabla_{\tn}\phi\ dx\n\\
&+\int_{\Gamma_{2}^{\alpha}} \f{\p\phi}{\p x_j} \ttau_2 \ttau_j \tn_k \f{\p\phi}{\p x_k}  +\tn_2 \tn_j \tn_k  \f{\p\phi}{\p x_j}\f{\p\phi}{\p x_k}\ dx_1\,.
\label{e38}
\end{align}
In what follows, $C_i$ is a generic constant which does not depend on our initial velocity and height $h$. Due to (\ref{maximumprinciple}), (\ref{choice2}) and (\ref{0201.1}), we have
that 
\begin{equation}
\label{e39}
\left|\int_{\Omega_{2}^{\alpha}} \f{\p(\tn_i\tn_j)}{\p x_i} \f{\p\phi}{\p x_j}  \tn_k \f{\p\phi}{\p x_k}\ dx\right|\le {C_1}\ \|u^-\|^2_{L^2(\oftp)}\le {C_2} E(0)\,,
\end{equation}
with similar estimates for each of the integrals on $\Omega_{2,\alpha}$ appearing on the right-hand side of (\ref{e38}). With (\ref{e38}) and (\ref{e39}), we then obtain:
\begin{equation*}
\int_{\{x_2\ge\alpha\}\cap\p\Omega}   |\nabla_{\tn}\phi|^2 dl\le {C_3}  (E(0)+\|\omega^-_0\|^2_{L^2(\Omega^-)}) +C_4\int_{\Gamma_{2}^{\alpha}} |u^-|^2 dx_1\,.
\end{equation*}
Remembering
that $u\cdot n=\nabla_{\tau}\phi=0$ on $\p\Omega$, and using (\ref{0201.1}) we deduce that for any $\alpha\ge \f{H}{4}$:
\begin{equation}
\label{3112.11}
\int_{\{x_2\ge\alpha\}\cap\p\Omega}   |u^-|^2\ dl\le {C_3} (\underbrace{ E(0)+\|\omega^-_0\|^2_{L^2(\Omega^-)}}_{M_0}) +C_4\int_{\Gamma_{2,\alpha}} |u^-|^2\ dx_1\,.
\end{equation}
Remembering that from (\ref{3112.9}) we can take for $\alpha$ any value between $\f{H}{4}$ to $\f{H}{2}$, we get by integrating (\ref{3112.11}) for $x_2$ between $\f{H}{4}$ and $\f{H}{2}$ (keeping in mind that $$ \forall\alpha\in[\f{H}{4},\f{H}{2}]\,,\ \int_{\{x_2\ge\alpha\}\cap\p\Omega}   |u^-|^2\ dl\ge \int_{\{x_2\ge\f{H}{2}\}\cap\p\Omega}   |u^-|^2\ dl\ ):$$
\begin{align*}
\f{H}{4}\int_{\{x_2\ge\f{H}{2}\}\cap\p\Omega}   |u^-|^2 dl\le & \frac{C_3 H}{4}  M_0 +C_4 \int_{\Omega^-(t)\cap\{\f{H}{4}\le x_2\le \f{H}{2}\}} |u^-|^2 dx\\
\le & \frac{C_3 H}{4}  M_0 +C_4\|u^-\|_{L^2(\oftp)}^2
\le  C_5 M_0\,.
\end{align*}
Therefore,
\begin{equation}
\label{e40}
\int_{\{x_2\ge\f{H}{2}\}\cap\p\Omega}   |u^-|^2\ dl\le
  C_6 M_0\,.
\end{equation}
We now define 
\begin{equation}
\label{e41}
\Omega_{1,\alpha}=\{x\in\Omega;\ x_1\ge \alpha\}\,,
\end{equation}
and 
\begin{equation}
\label{e42}
\Gamma_{1,\alpha}=\{x\in\Omega;\ x_1= \alpha\}\,.
\end{equation}

From our relation (\ref{3112.9}), we know that for $\alpha> \f{L}{4}$, $\Omega_{1,\alpha}$ does not intersect $\Omega^+(t)$ for all time of existence, and we will work with such values of $\alpha$ in what follows.  

By proceeding as for $\Omega_{2,\alpha}$ we obtain in a verbatim way that
\begin{equation}
\label{e43}
\int_{\{x_1\ge\f{L}{2}\}\cap\p\Omega}   |u^-|^2\ dl\le
   C_7 M_0\,.
\end{equation}
Due to our symmetry in $x_1$, the same estimate holds for 
$
\int_{\{x_1\le-\f{L}{2}\}\cap\p\Omega}   |u^-|^2\ dl$.

Using now our assumption (\ref{3112.12}) on $\Omega$,  we infer from (\ref{e43}) and (\ref{e40}) that
\begin{equation*}
\int_{\Gamma_1^c}   |u^-|^2\ dl\le
  C_{9} M_0\,.
\end{equation*}
Taking $h$, and the $L^2$ norm of velocities as well as the $L^2$ norm of the initial vorticity small enough so that
\begin{equation}
\label{3112.1}
 C_{9} (E(0)+ \|\omega^-_0\|^2_{L^2(\Omega^-)}) \le \f{1}{4}\f{\rho^+-\rho^-}{\rho^- (1+\alpha_\Omega)} g |\Omega^+|\,,
\end{equation}
then provides the desired estimate (\ref{e17}), which concludes our proof of finite in time singularity formation. 

Therefore for $T_{max}$ estimated by (\ref{e26}), we have established that so long as a smooth non self-intersecting and non contacting with $\p\Omega$ solution  exists, blow-up of $f$ will occur at $T_{max}$, namely we proved Theorem 1.

\section{Blow-up of a lower norm if finite-time self-contact or contact with $\p\Omega$}
\label{section9}

We now establish that the finite-time contact cases 2) of Theorem 1 lead to blow-up of a lower norm.

\begin{proof}
We just provide the proof of the more difficult case of self-contact of $\p\ostp$ with itself at $T_{max}$, the other case having a similar proof. Assume that $\p\ostp$ self-intersects at $T_{max}$ and that there exists $C_0>0$ finite so that
\begin{equation}
\label{200417.1}
\forall t\in [0,T_{\max})\,,\ \|\nabla_\tau\tau\|_{L^\infty(\Gamma(t))}+\int_0^t\|\nabla u^-\|_{L^\infty(\Omega^-(t))}\le C_0\,.
\end{equation}

From the fact that the length of the interface and the $L^2$ norm of the velocities $u^\pm$ are bounded, it is not difficult to infer from (\ref{200417.1})  that there exists $C_1>0$ such that
\begin{equation}
\label{200417.2}
\forall t\in [0,T_{\max})\,,\ \int_0^t\| u^-\|_{L^\infty(\Omega^-(t))}\le C_1\,.
\end{equation}

From (\ref{200417.2}) we can define by continuity in time as $t\rightarrow T_{max}$ $$\eta^-(x,T_{\max})=x+\int_0^{T_{max}} v^-(x,t)\ dt\,,$$ since $v^-$ has the same $L^\infty$ norm as $u^-$.
 The self intersection assumption 2) simply means that there exists $x_0\ne x_1$ points of $\p\Omega^+$ such that
  \begin{equation}
  \label{200417.3}
  \eta^-(x_0,T_{max})=\eta^-(x_1,T_{max})\,.
  \end{equation}

In \cite{CoSh2016} we proved there can only be a finite number of additional points $x_i\in\p\Omega^+$ such that $\eta^-(x_i,T_{max})=\eta^-(x_0,T_{max})$. Note that although the assumptions about regularity in \cite{CoSh2016} are stronger than the ones involved here, in order to prove this statement (and the other statements we will make after), it is only the fact that the length of $\Gamma(t)$ stays bounded, as well as the uniform bounds (\ref{200417.1}) and (\ref{200417.2}) which are needed. 

\begin{remark}
The present work does not exclude the possibility that $u^+$ would remain smooth all the way until contact. This exclusion was done in \cite{CoSh2016}. In order to exclude this situation, which corresponds to the case of a splash singularity (in order to have an analogous of the one-phase problem, all relevant norms in one of the phases are assumed bounded), the extra regularity in the framework of \cite{CoSh2016} are needed.
\end{remark}

From (\ref{200417.1}) we see that the tangent vector at $\eta(x_0,t)$ is a continuous function of space (due to the control of $\nabla_\tau \tau$) and time (due to the control of $\nabla u^-$). Given the fact the curve first self-intersect at time $T_{max}$ we have that the tangent vector on $\p\Omega^+(t)$ at each $\eta(x_i,T_{max})$ is the same, and we call it $e_1$ (it is not necessarily horizontal). 

By proceding in a way similar as in Section 6 of \cite{CoSh2016}, we have by the fundamental theorem of calculus applied in a vertical path and a path alongside the interface that:
\begin{align}
\left|\f{d}{dt}(\eta^-(x_0,t)-\eta^-(x_1,t))\right|&=|u^-(\eta^-(x_0,t),t))-u^-(\eta^-(x_1,t),t)|\n\\
&\le C_2 \|\nabla u^-\|_{L^\infty(\Omega^-(t))} |\eta^-(x_0,t)-\eta^-(x_1,t)|\,.
\label{210417.1}
\end{align}
Therefore, 
\begin{equation*}
\f{d}{dt}|\eta^-(x_0,t)-\eta^-(x_1,t)|^2\ge -2  C_2 \|\nabla u^-\|_{L^\infty(\Omega^-(t))} |\eta^-(x_0,t)-\eta^-(x_1,t)|^2\,,
\end{equation*}
which provides by integration:
\begin{equation*}
0=\f{|\eta^-(x_0,T_{max})-\eta^-(x_1,T_{max})|^2}{|\eta^-(x_0,0)-\eta^-(x_1,0)|^2}\ge e^{-2 C_2\int_0^{T_{max}}   \|\nabla u^-\|_{L^\infty(\Omega^-(t))}  \ dt}\,,
\end{equation*}
and thus
\begin{equation}
\label{210417.2}
\int_0^{T_{max}}   \|\nabla u^-\|_{L^\infty(\Omega^-(t))}dt=\infty\,,
\end{equation}
which is in contradiction with (\ref{200417.1}), and establishes  Theorem 2.
\end{proof}


\section{Equations of the rigid body moving inside an inviscid fluid}
\label{section10}

We now consider a {rigid body moving in the inviscid fluid}. The rigid body dynamics is described by the following unknowns:\hfill\break
$\bullet$ The position of the center of the rigid body at time $t$: {$x^s(t).$}\hfill\break
$\bullet$ The angular velocity of rigid body at time $t$: {$r(t)$}.\hfill\break
$\bullet$ The velocity field in the rigid body {$\Omega^s(t)=\Omega^s(0)+(x^s(t)-x^s(0))$} at time $t$:
{$$u^s(x,t)=\frac{d x^s}{dt} (t)+ r(t) (x-x^s(t))^\perp=v^s (t)+ r(t) (x-x^s(t))^\perp\,,$$}
with $(x_1,x_2)^\perp=(-x_2,x_1)\,.$\hfill\break
 $\bullet$ The fluid phase is described by the {incompressible Euler equations in $\Omega^f(t)=\Omega\cap\overline{\Omega^s(t)}^c$}.
The unknowns in the fluid phase are the velocity field {$u^f(x,t)$}, and pressure field {$p(x,t)$.}\hfill\break
This classical {interacting fluid-rigid solid system} is written as:
\begin{subequations}
\begin{alignat}{2}
{\rho_f(u^f_t + u^f\cdot \nabla u^f) + \nabla p} &= -{\rho_f\ g\ e_2} &
\text{in \ 
$\Omega^f(t)$}\,,\label{ch-1}\\
 \operatorname{div} u&={ 0} & \text{in \ 
$\Omega^f(t)$}\,, \\
{u^f\cdot n} &={ u^s\cdot n} & \text{on \  $\partial\Omega^s(t)$}\,,\\
{u^f\cdot n} &={0} & \text{on \  $\p\Omega$}\,, \\
{m_s \frac{d v^s}{dt}}&={\int_{\partial\Omega^s(t)} p\ n dl-m_s g\ e_2}\,,\label{ch-2}\\
{I_s \frac{d r}{dt}}&={\int_{\partial\Omega^s(t)} p\ (x-x^s(t))^\perp\cdot n dl}\,,\label{ch-3} \\
{u^f(0)} & ={u_0} &
\text{in \ 
$\Omega^f$}\,,\\
x^s(0)&=x^s_0\,, \ \ v^s(0)=v^s_{0},\ \ r(0)=r_0\,.
\end{alignat}
\end{subequations}
where $n$ is the exterior unit normal to $\Omega^f(t)$, pointing inside $\Omega^s(t)$, and $e_2$ is the unit vertical vector pointing upwards. Also, $m_s=\rho^s|\Omega^s$ is the mass of the rigid body, and $I_s$ the inertial moment. In this paper we assume that:
\begin{equation}
\label{density}
\rho_s>\rho_f\,.
\end{equation}

Existence and uniqueness to this system (if the initial data satisfies $u^f_0\cdot n=(v^s_0+r_0(x-x^s_0)^\perp)\cdot n$ on $\p\Omega^s$ and $u^f_0\cdot n=0$ on $\p\Omega$) was established by Glass and Sueur in \cite{GlSu2015}, which shows existence and uniqueness of a solution to this problem so long as $\p\ost$ does not touch $\p\Omega$.

\section{Choice of initial data}
\label{section11}
\subsection{Symmetric data}
We denote by $\Omega$ a domain of class $C^1$ locally on one side of its connected boundary, which satisfies the same type of symmetry assumptions as in Section \ref{section6}.

We then choose $\Omega^s$ such that $\overline{\Omega^s}\subset\Omega$ to be an equally symmetric domain with respect to the vertical axis $x_1=0$, which is of the same regularity class as $\Omega$ and is locally on one side of its boundary. 

We then define the initial fluid domain $\Omega^f=\Omega\cap(\overline{\Omega^s})^c$.

We choose 
\begin{subequations}
\begin{align}
v^s(0)=&(0,v^s_2(0))\,,\ \text{with}\ v^s_2(0)< 0\,,\label{3112.4}\\
u^f(0)\ &\text{divergence free with}\ u^f_1(0)\ \text{ odd, } u^f_2(0)\ \text{ even, and}\,,\\
v^s_2(0) n_2=& u^f(0)\cdot n\ \text{on}\ \p\Omega^s\,,\\
r(0)=&0\,,\\
x^s(0)=&(0,h)\,,
\end{align}
\end{subequations}
with $h>0$ such that $\overline{\Omega^s}\subset\Omega$.


Given the symmetry of $\Omega$ and $\Omega^s$ with respect to the $x_1=0$ axis, as well as the symmetry of the initial data with respect to this axis, we then have that for all time of existence $u^f$ and $u^s$ are symmetric with respect to the vertical axis $x_1=0$: $u_1^f(-x_1,x_2)=-u_1^f(x_1,x_2)$ and $u_2^f(-x_1,x_2)=u_2^f(x_1,x_2)$ and 
$
v_1^s(t)=0$, $
r(t)=0$ 
 for all time of existence. Therefore, the rigid solid falls in a vertical translation (at a speed dependent of time) and there is no rotation. The argument is simply to use the construction of solutions of \cite{GlSu2015} pages 937-942, set up with $r=0$ in the functional framework.
 
 
 As we will see later on, the assumption (\ref{3112.4}) together with a small square integrable vorticity 
 ensures that if the rigid body falls from its initial position, then the rigid body keeps falling for all time of existence. It furthermore stays away from $\p\Omega\cap\Gamma_1^c$ by a strictly positive distance 

 We will establish in Section \ref{section8} that the rigid body indeed falls from its initial position later on, which will ensure it stays away from the complementary of the graph bottom part $\Gamma_1$ of $\p\Omega$ by at least $D$. Here, since we know in advance where contact would occur (vertical fall of a body keeping its shape), we just need $\Gamma_1$ to be under the form of a graph where contact would occur.

 

\section{Stream function and conservation of energy}
\label{section12}

\subsection{Stream function}
Since $u^f$ is divergence free we have $u^f=\nabla^\perp\phi=(-\f{\p\phi}{\p x_2},\f{\p\phi}{\p x_1})$, with $\phi$ solution of the elliptic system:
\begin{subequations}
\label{1512.1}
\begin{align}
\Delta\phi(\cdot,t)=&\omega(\cdot,t)=\text{curl}u^f(\cdot,t)\,,\ \text{in}\ \Omega^f(t)\,,\label{1512.1a}\\
\phi(\cdot,t)=& 0\,,\ \text{on}\ \p\Omega\,,\label{1512.1b}\\
\phi(x,t)=& v^s_2(t)x_1\,,\ \text{on}\ \p\Omega^s(t)\,.\label{1512.1c}
\end{align}
\end{subequations} 

Since $\nabla_\tau\phi=u\cdot n$ we then have $\nabla_\tau\phi=0$ on $\p\Omega$ which ensures we can choose $\phi=0$ on the connected $\p\Omega$. On the other hand, we have on $\p\Omega_s(t)$ $$\nabla_\tau\phi=v^s_2(t) n_2=v^s_2(t)\tau_1=v^s_2(t) \nabla_\tau x_1\,,$$ which provides $\phi(x,t)=v^s_2(t) x_1+c(t)$. Next, by the fundamental theorem of calculus, if we denote by $r_2$ the distance from the centre of gravity to the lowest point on $x_1=0$ (which is not necessarily the lowest point of the rigid body, just the lowest on the vertical axis of symmetry):
\begin{align*}
\phi(0,x_2(t)-r_2,\cdot)=&\phi(0,0,\cdot)+\int_0^{x_2(t)-r_2} \f{\p\phi}{\p x_2}(0,x_2,\cdot)\ dx_2\n\\
=& -\int_0^{x_2(t)-r_2} u^f_1(0,x_2,\cdot)\ dx_2=0\,,
\end{align*} due to the fact $u^f_1$ is odd. This in turn provides us with $c(t)=0$ and (\ref{1512.1c}).

\subsection{Energy conservation}

For all time of existence it is classical that the quantity
\begin{equation}
\label{c1}
\f{1}{2}m_s |v^s|^2(t)+\f{1}{2}\rho^f\int_{\oft} |u^f(x,t)|^2\ dx+ m_s g x^s_2(t)+\rho^f g\ \int_{\Omega^f}\eta^f_2\ dx\,,
\end{equation}
is independent of time. Similarly as when establishing (\ref{ff5}), this shows the total energy
\begin{align}
E(t)=&\f{1}{2}m_s |v^s|^2(t)+\f{1}{2}\rho^f\int_{\oft} |u^f(x,t)|^2\ dx+ m_s g x^s_2(t)-\rho^f g x^s_2(t) |\Omega^s|\n\\
=&\f{1}{2}m_s |v^s|^2(t)+\f{1}{2}\rho^f\int_{\oft} |u^f(x,t)|^2\ dx+ \underbrace{(\rho^s-\rho^f) }_{> 0} g x^s_2(t) |\Omega^s|\,,
\label{c2}
\end{align}
is constant in time for all time of existence of a smooth solution (namely from \cite{GlSu2015} so long as no eventual collision with the boundary occurs).

Thus,
\begin{equation}
\f{1}{2}m_s |v^s|^2(t)+\f{1}{2}\rho^f\int_{\oft} |u^f(x,t)|^2\ dx+ (\rho^s-\rho^f)  g x^s_2(t) |\Omega^s|= E(0)\,.
\label{ch1}
\end{equation}
 Moreover, since $x^s_2> 0$, we also have from (\ref{ch1}) the control of the  kinetic energy:
\begin{equation}
\f{1}{2}m_s |v^s|^2(t)+\f{1}{2}\rho^f\int_{\oft} |u^f(x,t)|^2\ dx\le  E(0)\,.
\label{ch3}
\end{equation}

\section{Elliptic estimate away from the contact zone and non zero velocity}
\label{section13}

Our starting point is (\ref{28.10}) which is valid for this problem as well, since it was established from (\ref{ef2}) which is satisfied for this problem as well. 

We have seen earlier on that any point of $\p\Omega\cap \Gamma_1^c$ will stay away from $\ost$ by a positive distance $D>0$ for all time in $[0,T_{max})$, $T_{max}$ being the maximal time of existence of a smooth solution (that we do not assume finite or not here). In a manner similar as we proved the boundary estimate (\ref{3112.11}) for the vortex sheet problem, we have by using $\xi^2(x)  \nabla_{n(x_0)}\phi(x,t)$ (where $\xi$ is a cut-off function in a neighborhood of $x_0\in\Gamma_1^c$) as a test function in the same elliptic system (\ref{1512.1}) that
:
\begin{lemma}
For all time of existence of a smooth solution,
\begin{equation}
\label{reg1}
\|u^f\|^2_{L^2(\p\Omega\cap \Gamma_1^c)}\le C\ (\|u^f\|^2_{L^2(\oft)}+\|\omega_0\|^2_{L^2(\Omega^f)})\,.
\end{equation}
where $C>0$ is independent of time.
\end{lemma}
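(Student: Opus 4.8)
The plan is to mimic the elliptic argument already carried out for the vortex sheet problem in Section \ref{section8}, but localized near an arbitrary point $x_0\in\p\Omega\cap\Gamma_1^c$ rather than on the horizontal slices $\Gamma_2^\alpha$. Since $\p\Omega$ and $\p\Omega^s(t)$ remain separated by the fixed distance $D>0$ (from the analogue of (\ref{3112.10}) in the rigid-body setting), there is a fixed neighbourhood $U$ of $x_0$ in $\overline\Omega$ which, for all time of existence, lies entirely in the fluid region $\oft$ and does not meet $\p\Omega^s(t)$. I would fix a smooth cut-off $\xi$ supported in $U$ with $\xi\equiv 1$ on a smaller neighbourhood of $x_0$, and extend $n$ to the global vector field $\tilde n$ of Section \ref{section2} (with $\tilde\tau=(\tilde n_2,-\tilde n_1)$), so that on $\p\Omega$ we have $|\tilde n|^2=1$, $\tilde n\cdot\tilde\tau=0$, $\nabla_\tau\phi=u^f\cdot n=0$.

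The core computation is to test the equation $\Delta\phi=\omega$ against $\xi^2(x)\,\nabla_{\tilde n(x)}\phi(x,t)$ and integrate over $U\cap\oft$. Expanding $|\tilde n|^2\Delta\phi$ in the orthonormal frame $(\tilde\tau,\tilde n)$ exactly as in (\ref{2812.8}), then integrating by parts twice in the two directional pieces (the $\tilde\tau\tilde\tau$ piece and the $\tilde n\tilde n$ piece) just as in (\ref{e36})--(\ref{e37}), produces a boundary term on $\p\Omega$ equal to $\int_{\p\Omega}\xi^2|\nabla_{\tilde n}\phi|^2\,dl$ (the $\tilde\tau$-boundary term vanishes since $\nabla_\tau\phi=0$ there, and there is no contribution from $\partial U$ because $\xi$ vanishes there, nor from $\p\Omega^s(t)$ which $U$ avoids). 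Everything else is a sum of interior integrals over $U\cap\oft$ whose integrands are, up to bounded coefficients built from $\tilde n,\nabla\tilde n,\nabla^2\tilde n$ and $\nabla\xi$, either quadratic in $\nabla\phi$ or of the form $\xi^2|\tilde n|^2\omega\,\nabla_{\tilde n}\phi$; the latter is bounded by $\|\omega\|_{L^2}\|\nabla\phi\|_{L^2}\le\tfrac12(\|\omega_0\|^2_{L^2(\Omega^f)}+\|u^f\|^2_{L^2(\oft)})$ using conservation of vorticity in 2D ($\|\omega(t)\|_{L^2}=\|\omega_0\|_{L^2}$ for the $\omega=0$-free transport, or the appropriate bound otherwise) and $\|\nabla\phi\|_{L^2(\oft)}=\|u^f\|_{L^2(\oft)}$. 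Hence $\int_{\p\Omega}\xi^2|\nabla_{\tilde n}\phi|^2\,dl\le C(\|u^f\|^2_{L^2(\oft)}+\|\omega_0\|^2_{L^2(\Omega^f)})$ with $C$ depending only on $\Omega$, $\xi$ and $D$, hence time-independent. Since on $\p\Omega$ one has $|u^f|^2=|\nabla_\tau\phi|^2+|\nabla_n\phi|^2=|\nabla_{\tilde n}\phi|^2$, this controls $\int_{\p\Omega}\xi^2|u^f|^2\,dl$, hence $\int$ over the patch where $\xi\equiv 1$.

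Finally, to pass from a single patch to all of $\p\Omega\cap\Gamma_1^c$: this set is a compact subset of $\p\Omega$ at distance $\ge D$ from $\p\Omega^s(t)$ uniformly in $t$, so it is covered by finitely many such patches (a fixed cover, chosen once using the uniform separation), and summing the finitely many patch estimates yields (\ref{reg1}) with a single time-independent constant $C$. The main obstacle is bookkeeping rather than conceptual: one must check that when the cut-off $\xi$ is present, all the extra terms it generates under the two integrations by parts (terms carrying $\nabla\xi$ or $\nabla\xi^2$) are again merely quadratic in $\nabla\phi$ with bounded coefficients and thus absorbed into the right-hand side, and that the neighbourhood $U$ can genuinely be chosen independent of $t$ — which is exactly what the uniform positive distance $D$ between $\p\Omega^s(t)$ and $\Gamma_1^c$ guarantees. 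The use of $C^1$ (rather than $H^{9/2}$) regularity of $\Omega$ and $\Omega^s$ is enough here because $\tilde n\in C^1(\overline\Omega)$ already provides bounded first derivatives, and no higher regularity of $\phi$ is invoked — only the $H^1$ energy bound.
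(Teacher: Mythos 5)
Your overall strategy is the right one and matches the paper's sketch: test the elliptic equation $\Delta\phi=\omega$ against a cut-off normal-derivative of $\phi$ localized in a patch that the moving body never enters (thanks to the uniform distance $D$), integrate by parts twice to extract a boundary coercivity $\int\xi^2|\nabla_n\phi|^2\,dl$ on $\p\Omega$, absorb everything else by Cauchy--Schwarz and the conservation of the $L^2$ norms of $u^f$ and $\omega$, then cover $\p\Omega\cap\Gamma_1^c$ by finitely many patches. The bookkeeping with the $\nabla\xi$ terms and the vanishing of the $\p U$ and $\p\Omega^s(t)$ contributions are all fine, and the observation $|u^f|=|\nabla_n\phi|$ on $\p\Omega$ is what closes the estimate.

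However, there is one genuine defect. You test against $\xi^2\nabla_{\tilde n(x)}\phi$, where $\tilde n$ is the harmonic extension of the normal, and you assert at the end that ``$\tilde n\in C^1(\overline\Omega)$ already provides bounded first derivatives'' so that $C^1$ regularity of $\Omega$ suffices. That inclusion $\tilde n\in H^3(\Omega)\subset C^1(\overline\Omega)$ (equation (\ref{choice2})) was derived in Section \ref{section2} from the $H^{9/2}$ regularity imposed on $\p\Omega$ for the vortex-sheet problem; it does \emph{not} follow from the $C^1$ regularity assumed in Theorem \ref{theorem3}. For a domain that is only $C^1$, the normal $n$ is merely continuous on $\p\Omega$, hence the harmonic extension $\tilde n$ need not be Lipschitz up to the boundary, and the terms your double integration by parts produces which carry $\nabla\tilde n$ (or, more seriously, $\nabla^2\tilde n$ if you mirror (\ref{e36})--(\ref{e37}) verbatim) are not bounded. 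The paper's own sketch avoids this by testing against the \emph{constant} direction $\nabla_{n(x_0)}\phi$ with $a=n(x_0)$ fixed: the two integrations by parts then produce no derivatives of the test vector at all, only $\nabla\xi$ terms and the $\omega\nabla_a\phi$ term, all controllable, and the boundary contribution becomes $\tfrac12\int\xi^2\,(n(x)\cdot n(x_0))\,|\nabla_n\phi|^2\,dl$, which is coercive once the support of $\xi$ is chosen small enough that $n(x)\cdot n(x_0)\ge\tfrac12$ (continuity of $n$ is all that is needed). Replacing $\tilde n(x)$ by $n(x_0)$ throughout your argument repairs the gap and makes the $C^1$ hypothesis genuinely sufficient.
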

\begin{remark}
Of course the energy estimate implies that $\|u^f\|_{L^2(\oft}$ is bounded uniformly in time, implying that the right-hand side of (\ref{reg1}) can be replaced by just a constant $C$ independent of time. Although having just $C$ is enough for most of our purposes, it turns out that the more precise form (\ref{reg1}) is used in Section \ref{section14} in a crucial way. 
\end{remark}

In a similar way we also have for $\Gamma_1^s(t)$ being the vertical projection of $\Gamma_1$ on $\p\ost$:

\begin{lemma}
For all time of existence of a smooth solution,
\begin{equation}
\label{reg2}
\|u^f\|_{L^2((\Gamma_1^s(t))^c\cap\p\Omega^s(t))}\le C\,,
\end{equation}
where $C>0$ is independent of $t\in [0,T_{max})$.
\end{lemma}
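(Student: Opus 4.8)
The plan is to reproduce, on the moving interface $\p\ost$, the elliptic test--function argument already used for the previous lemma (which itself mimics the derivation of the boundary estimate behind (\ref{3112.11}) in the vortex sheet case), the only genuinely new ingredients being the nonzero Dirichlet value of $\phi$ on $\p\ost$ and a localization near $\p\ost$ that is uniform in $t$. First I would reduce the claim to the normal derivative of $\phi$: since $u^f=\nabla^\perp\phi$, on $\p\ost$ one has the orthogonal decomposition $|u^f|^2=|\nabla_\tau\phi|^2+|\nabla_n\phi|^2$, and by (\ref{1512.1c}), $\nabla_\tau\phi=v^s_2(t)\,\nabla_\tau x_1=v^s_2(t)\,n_2$, so that, using the energy bound (\ref{ch3}) (which gives $|v^s_2(t)|^2\le 2E(0)/m_s$) and $|\p\ost|=|\p\Omega^s|$, the tangential part satisfies $\int_{(\Gamma_1^s(t))^c\cap\p\ost}|\nabla_\tau\phi|^2\,dl\le (v^s_2(t))^2|\p\Omega^s|\le C$ uniformly in $t$. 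It then remains to bound $\int_{(\Gamma_1^s(t))^c\cap\p\ost}|\nabla_n\phi|^2\,dl$.

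For this I would first extract from the geometric setup of Section~\ref{section11} a uniform lower bound $D'>0$ for the distance of $(\Gamma_1^s(t))^c\cap\p\ost$ to $\p\Omega$ (the body stays in a fixed compact region away from the top and lateral parts of $\p\Omega$, and $(\Gamma_1^s(t))^c$ lies above $\Gamma_1^s(t)$, hence above the graph $\Gamma_1$, at a distance controlled by the solid's fixed geometry), so that the near--contact region where $\oft$ pinches is safely excluded. Since $\ost$ is a pure vertical translate of the $C^1$ domain $\Omega^s$, I would then fix, once and for all, a finite cover of $\p\Omega^s\cap(\Gamma_1^s)^c$ by small balls $B(x_k,\delta)$ in which $\p\Omega^s$ is a $C^1$ graph with unit normal deviating from $n(x_k)$ by an arbitrarily small prescribed amount, and with $\overline{B(x_k,\delta)}$ at distance $\ge D'/2$ from $\p\Omega$; translating this cover with the body gives, for every $t$, a cover $\{B(x_k(t),\delta)\}$ of $(\Gamma_1^s(t))^c\cap\p\ost$ with the same properties, together with a subordinate partition of unity $\{\xi_k^2\}$ satisfying $\|\nabla\xi_k\|_{L^\infty}\le C$ uniformly in $t$.

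On each ball I would use $\xi_k^2\,\nabla_{n_k}\phi$, with $n_k:=n(x_k(t))$ a frozen unit vector, as a test function in (\ref{1512.1a}) and integrate over $B(x_k(t),\delta)\cap\oft$, expanding $\Delta$ in the orthonormal basis $(n_k^\perp,n_k)$ and integrating by parts exactly as in (\ref{2812.8})--(\ref{e38}). Because $\xi_k$ vanishes near $\p B(x_k(t),\delta)$ and the ball avoids $\p\Omega$, the only surviving boundary term is over $B(x_k(t),\delta)\cap\p\ost$, where the outward normal is $n$; the identity isolates the good term $\int\xi_k^2|\nabla_{n_k}\phi|^2\,dl$ (with the positive weight $n_k\cdot n$), and the remaining mixed and tangential boundary terms are small multiples of $\int\xi_k^2|\nabla_n\phi|^2\,dl$ plus $C\int\xi_k^2|v^s_2|^2\,dl\le C$, the small multiples being absorbed into the good term once the cover is taken fine enough that $n$ deviates little from $n_k$. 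All the other contributions are volume integrals over $B(x_k(t),\delta)\cap\oft$ with integrands quadratic in $\nabla\phi$ (times bounded derivatives of $n_k^\perp$, $n_k$, $\xi_k$) or of the form $\xi_k^2\,\omega\,\nabla_{n_k}\phi$, hence bounded by $C(\|u^f\|_{L^2(\oft)}^2+\|u^f\|_{L^2(\oft)}\|\omega\|_{L^2(\oft)})\le C$ by (\ref{ch3}) together with the material conservation $\|\omega(\cdot,t)\|_{L^2(\oft)}=\|\omega_0\|_{L^2(\Omega^f)}$ (the flow is area preserving and, the body being impermeable, carries $\Omega^f$ onto $\oft$). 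Summing the finitely many estimates and using the partition of unity yields $\int_{(\Gamma_1^s(t))^c\cap\p\ost}|\nabla_n\phi|^2\,dl\le C$ uniformly in $t$, which with the first step gives (\ref{reg2}).

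The step I expect to be the main obstacle is precisely the $t$-uniform localization near the moving interface: producing a single finite cover and partition of unity valid for all $t\in[0,T_{max})$, together with the uniform $D'$, so that the pinching region $\Gamma_1^s(t)$ is correctly excluded. Both follow from the rigidity of the motion (pure vertical translation) and the a priori confinement of the body from Section~\ref{section11}; once they are available, the computation is a verbatim transcription of the one behind the previous lemma, and the additional boundary terms created by $\phi\ne 0$ on $\p\ost$ cost only the harmless factor $|v^s_2(t)|^2\le 2E(0)/m_s$.
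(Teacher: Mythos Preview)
Your proposal is correct and follows essentially the same approach the paper indicates: the paper merely says ``in a similar way'' to the preceding lemma (itself proved by using $\xi^2\,\nabla_{n(x_0)}\phi$ as test function in (\ref{1512.1}), mimicking (\ref{2812.8})--(\ref{e38})), and you have spelled out precisely that computation on the moving boundary, correctly isolating the two new points---the nonzero tangential term $\nabla_\tau\phi=v_2^s n_2$ controlled by (\ref{ch3}), and the $t$-uniformity of the cover coming from the rigid vertical translation.
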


We next establish that the rigid body keeps falling for all time of existence of a smooth solution with our small square integrable curl assumption.

\begin{lemma}
\label{fall}
With our choice of initial data in Section \ref{section11}, for all time $t>0$ of existence of a smooth solution, we have $v^s_2(t)< 0$.
\end{lemma}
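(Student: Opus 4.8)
The goal is to show $v^s_2(t)<0$ for all $t>0$ in the lifespan of a smooth solution, given that $v^s_2(0)<0$ and the smallness assumption \eqref{smallvortex} on the odd vorticity. The natural approach is a continuity/bootstrap argument: let $t_*=\sup\{t>0 : v^s_2 <0 \text{ on } (0,t)\}$; since $v^s_2$ is continuous and $v^s_2(0)<0$ we have $t_*>0$, and it suffices to derive a contradiction from the hypothesis $t_*<T_{max}$, i.e. $v^s_2(t_*)=0$. The plan is to combine two ingredients at time $t_*$: the energy identity \eqref{ch1}, which at $t_*$ (where the solid kinetic energy $\tfrac12 m_s|v^s|^2=0$) forces the fluid kinetic energy and the potential term to balance $E(0)$; and the equation of vertical motion of the solid, obtained from \eqref{28.10} (noted in the excerpt to be valid for this rigid-body problem), to show that $\tfrac{dv^s_2}{dt}(t_*)<0$, which would contradict $v^s_2$ having a minimum-type behaviour / leaving the negative region at $t_*$.

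\textbf{Key steps.} First I would evaluate \eqref{28.10} at a general time $t$, writing
\[
m_s\frac{dv^s_2}{dt}=\rho^-\int_{\p\Omega}\frac{|u^f|^2}{2}\,n_2\,dl-\rho^f\frac{d}{dt}\int_{\oft}u^f_2\,dx-(m_s-\rho^f|\Omega^s|)g-\frac{d}{dt}\int_{\p\Omega}\rho^f x_1\,u^f\cdot\tau\,dl\,,
\]
and estimate the right-hand side at $t=t_*$. The coefficient $-(m_s-\rho^f|\Omega^s|)g$ is strictly negative (this is where $\rho^s>\rho^f$ enters). The boundary term $\int_{\p\Omega}\tfrac{|u^f|^2}{2}n_2\,dl$ is controlled via Lemma with bound \eqref{reg1} together with the $\Gamma_1$-geometry (splitting $\p\Omega=\Gamma_1\cup\Gamma_1^c$ as in Section \ref{section8}), giving a bound of the form $C(\|u^f\|^2_{L^2(\oft)}+\|\omega_0\|^2_{L^2(\Omega^f)})$; at $t=t_*$, the energy identity \eqref{ch1} with $v^s(t_*)=0$ gives $\tfrac12\rho^f\|u^f(t_*)\|^2_{L^2(\oft)}\le E(0)$, and moreover $E(0)=\tfrac12 m_s|v^s_2(0)|^2+\tfrac12\rho^f\|u^f_0\|^2_{L^2}+(\rho^s-\rho^f)gh|\Omega^s|$, so the smallness hypothesis \eqref{smallvortex} (the first argument of the $\min$) is exactly what forces $\tfrac{m_s}{\rho^f D_\Omega}|v^s_2(0)|^2$ to dominate $\|\omega_0\|^2$ and hence keeps this boundary contribution below $\tfrac12(m_s-\rho^f|\Omega^s|)g$; the second argument of the $\min$ in \eqref{smallvortex} handles the remaining terms. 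The genuinely delicate point is the two time-derivative terms $\tfrac{d}{dt}\int_{\oft}u^f_2\,dx$ and $\tfrac{d}{dt}\int_{\p\Omega}\rho^f x_1 u^f\cdot\tau\,dl$: rather than bound these pointwise in $t$, I would integrate the whole identity \eqref{28.10} in time from $0$ to $t$ (as is already done in the excerpt to obtain \eqref{e15}), so that these become boundary-in-time terms $F_2(t)-C_0$ of the form $-\rho^f\int_{\oft}u^f_2\,dx-\int_{\p\Omega}\rho^f x_1 u^f\cdot\tau\,dl$, each bounded in absolute value by $C\|u^f\|_{L^2(\oft)}\le C\sqrt{E(0)}$ uniformly in time, plus the growing term $+(\rho^s-\rho^f)|\Omega^s|gt$. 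Then the argument becomes: the integrated relation shows $m_s v^s_2(t)\ge (\rho^s-\rho^f)|\Omega^s|gt - (\text{bounded and controllably small terms}) + m_s v^s_2(0)$ has the wrong sign for small $t$... so in fact the clean statement is that $v^s_2$ cannot return to $0$: if $v^s_2(t_*)=0$ then plugging $t_*$ into the integrated identity and using \eqref{smallvortex} to absorb the $|u^f|^2$-boundary term and the $L^2$ terms into a fraction of $(\rho^s-\rho^f)|\Omega^s|gt_*$ yields $0=m_s v^s_2(t_*)\le -\tfrac12(\rho^s-\rho^f)|\Omega^s|gt_* + (\text{something} \le \tfrac14(\cdots)) <0$, the desired contradiction.

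\textbf{Main obstacle.} The crux is the bookkeeping that makes the smallness condition \eqref{smallvortex} the precise threshold: one must track that $E(0)$ itself is not assumed small (only $\|\omega_0\|_{L^2}$ and, through Section \ref{section11}'s data, the fluid kinetic energy are the tunable quantities — but here $h$ is \emph{fixed}, not small), so the argument cannot simply make $E(0)$ negligible against $(\rho^s-\rho^f)|\Omega^s|gt$ for all $t$; instead it must use that for $t$ \emph{bounded away from $0$} the growing term dominates, while for small $t$ continuity from $v^s_2(0)<0$ carries the sign, and these two regimes must be glued without gap. Making the constant $D_\Omega$ of \eqref{040717.1} and the constant $C$ of \eqref{reg1} appear with exactly the coefficients in \eqref{smallvortex} is the careful part; conceptually, though, everything reduces to: buoyancy-corrected gravity $-(m_s-\rho^f|\Omega^s|)g<0$ is the dominant force, the boundary kinetic term on $\Gamma_1^c$ is the only term that could push upward, and small $L^2$ vorticity plus the graph geometry of $\Gamma_1$ keep it too small to reverse the fall.
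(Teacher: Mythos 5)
The paper's proof of Lemma \ref{fall} is quite different from your plan, and your plan as stated has a genuine gap. Your strategy is to integrate the center-of-mass identity \eqref{28.10} in time and evaluate at the first zero $t_*$ of $v^s_2$, concluding $0 = m_s v^s_2(t_*) \le m_s v^s_2(0) - \tfrac12(m_s-\rho^f|\Omega^s|)g t_* + (\text{bounded terms}) < 0$. The problem is exactly the one you flag at the end: this only produces a contradiction when $t_*$ is bounded below, since the gravity term $-(m_s-\rho^f|\Omega^s|)g t_*$ vanishes as $t_*\to 0$, while the boundary-in-time terms $-\rho^f\int_{\oft}u^f_2\,dx$ and $-\rho^f\int_{\p\Omega}x_1 u^f\cdot\tau\,dl$ are only controlled by $C\sqrt{E(0)}$, which is of the same order as $m_s|v^s_2(0)|$ itself (not small relative to it). Since the first zero $t_*>0$ could a priori be arbitrarily close to $0$, continuity from $v^s_2(0)<0$ does not close the gap, and the two regimes you describe cannot be glued by this method without additional input. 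You never produce that input; the bookkeeping you describe would not yield the asserted strict inequality for small $t_*$.

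The paper avoids this issue entirely by not using the time-integrated center-of-mass identity at all in this lemma. It argues pointwise at the hypothetical first zero $t_0$: writing $u^f=\nabla^\perp\phi$, Green's formula \eqref{29.01} expresses $\|u^f(t_0)\|^2_{L^2(\Omega^f(t_0))}$ in terms of $\int \omega\phi$ and a boundary term on $\p\Omega^s(t_0)$ that carries an explicit factor of $v^s_2(t_0)$. Since $v^s_2(t_0)=0$, that boundary term vanishes, and the Poincar\'e inequality \eqref{1403.3} for the extension $\bar\phi\in H^1_0(\Omega)$ (a construction that is uniform in $t$, independent of how close $\p\Omega^s(t)$ is to $\p\Omega$) closes the estimate to $\|u^f(t_0)\|^2_{L^2}\le D_\Omega\|\omega_0\|^2_{L^2}$; this is estimate \eqref{29.03}. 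Together with energy conservation and the fact that the body has strictly descended on $[0,t_0)$ (so the potential energy is strictly lower), the first branch of the smallness assumption \eqref{smallvortex} then gives $E(t_0)<E(0)$, a contradiction. This works for any $t_0>0$, however small. So the correct mechanism is the elliptic stream-function estimate that makes $\|u^f\|_{L^2}$ controlled by $\|\omega_0\|_{L^2}$ \emph{precisely when $v^s_2=0$}, not the competition between the growing gravity term and bounded fluctuations in the integrated momentum identity.
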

\begin{proof}

Since $v^s_2(0)<0$, we know that for some time $T>0$ we will have
$v^s_2(t)<0$ for all $t<T$. Now let us assume that there exists a first value of  $t_0>0$ such that 
\begin{equation}
\label{1512.2}
v^s_2(t_0)=0\,,
\end{equation}
while there is no contact with $\p\Omega$ at $t_0$, 
with
\begin{equation}
\label{1512.4}
\forall t\in [0,t_0)\,,\ v^s_2(t)<0\,.
\end{equation}
(namely the rigid body is with zero speed at $t_0$, and does not touch $\p\Omega$, and was before that time falling at a negative vertical speed). From the start of this Section, we have $u^f=\nabla^\perp\phi$ satisfying (\ref{1512.1}).

From the elliptic system (\ref{1512.1}) we immediately have by Green's theorem:
\begin{align}
\int_{\oft}|u^f|^2\ dx&=-\int_{\oft}\omega\phi\ dx+v^s_2(t)\int_{\p\ost}\nabla_n\phi\ x_1\ dl\n\\
&=-\int_{\oft}\omega\phi\ dx-v^s_2(t)\int_{\p\ost}u^f\cdot\tau\ x_1\ dl\,.\label{29.01}
\end{align}
Therefore,
\begin{equation}
\label{29.02}
\|u^f\|^2_{L^2(\oft)}\le \|\phi\|_{L^2(\oft)}\|\omega_0\|_{L^2(\Omega^f)}+|v^s_2(t)| \left|\int_{\p\ost}u^f\cdot\tau\ x_1\ dl\right|\,.
\end{equation}
We now need to establish a Poincar\'e inequality for $\phi$ (independent of how close to contact we are), in order to control $\|\phi\|_{L^2(\oft)}$. To do so we simply notice that if we define $\bar{\phi}$ as 
\begin{equation}
\label{1403.2}
\bar{\phi}(x,t)=1_{\Omega^f(t)}(x)\phi(x)+1_{\overline{\ost}}(x)v^s_2(t) x_1\,,
\end{equation}
we have due to the continuity (\ref{1512.1c}) that $\bar{\phi}\in H^1(\Omega)$, and due to (\ref{1512.1b}) that $\bar{\phi}\in H^1_0(\Omega)$. Note here that this is done for any $t$ such that $\p\ost$ and $\p\Omega$ do not intersect.

By the standard Poincar\'e inequality for $\bar{\phi}$ in $\Omega$, we then have (independently of any $t$ such that $\p\ost$ and $\p\Omega$ do not intersect):
\begin{equation}
\label{1403.3}
\int_{\Omega}\bar{\phi}^2\ dx\le C_{\Omega} \int_{\Omega}|\nabla\bar{\phi}|^2\ dx=C_{\Omega} \left(\int_{\oft}|\nabla{\phi}|^2\ dx+v^s_2(t)^2 |\ost|\right)\,.
\end{equation}
From (\ref{29.02}) and (\ref{1403.3}) we infer successively:
\begin{align*}
\|u^f\|^2_{L^2(\oft)}&\le \f{\|\phi\|^2_{L^2(\oft)}}{2C_\Omega}+\f{C_\Omega}{2}\|\omega_0\|^2_{L^2(\Omega^f)}+|v^s_2(t)| \left|\int_{\p\ost}u^f\cdot\tau x_1 dl\right|\\
&\le \f{1}{2}\|\nabla\phi\|^2_{L^2(\oft)}+\f{1}{2} v^s_2(t)^2 |\Omega^s|+\f{C_\Omega}{2}\|\omega_0\|^2_{L^2(\Omega^f)}\\
& \ \ \ +|v^s_2(t)| \left|\int_{\p\ost}u^f\cdot\tau\ x_1\ dl\right|\,.
\end{align*}
Therefore,
\begin{equation}
\label{1403.4bis}
\|u^f\|^2_{L^2(\oft)}\le  v^s_2(t)^2 |\Omega^s|+{C_\Omega}\|\omega_0\|^2_{L^2(\Omega^f)} +2|v^s_2(t)| \left|\int_{\p\ost}u^f\cdot\tau\ x_1\ dl\right|\,.
\end{equation}
We will need later on to replace the integral set on $\p\ost$ by an integral set on $\p\Omega$. This is done in the following way:
\begin{equation}
\label{28.03bis}
\int_{\p\ost} x_1\ u^f\cdot\tau\ dl
=\int_{\p\ost} x_1\ (u^f_1 n_2-u^f_2 n_1)\ dl
\,.
\end{equation}
Now, by integration by parts in $\oft$ for the right-hand side of (\ref{28.03bis}):
\begin{align}
\int_{\p\ost} x_1 u^f\cdot\tau dl=&-\int_{\p\Omega} x_1 (u^f_1 n_2-u^f_2 n_1) dl+\int_{\oft} \f{\p(x_1 u^f_1)}{\p x_2}-\f{\p(x_1 u^f_2)}{\p x_1} dx\n\\
=&-\int_{\p\Omega} x_1 u^f\cdot\tau\ dl- \int_{\oft}x_1\omega+ u^f_2\ dx\n\\
=&-\int_{\p\Omega} x_1 u^f\cdot\tau\ dl- \int_{\oft}x_1\omega+ \f{\p\phi}{\p x_1}\ dx\n\\
=&-\int_{\p\Omega} x_1 u^f\cdot\tau dl- \int_{\oft}x_1\omega dx -v^s_2(t)\int_{\p\ost} x_1 n_1  dl\,,
\label{300617.1}
\end{align}
where we used (\ref{1512.1b}), (\ref{1512.1c}) and integration by parts to obtain the last term above.
Therefore using (\ref{300617.1}) in (\ref{1403.4bis}) we obtain:
\begin{align*}
\|u^f\|^2_{L^2(\oft)}\le & v^s_2(t)^2 (|\Omega^s|+2\text{diam}({\Omega^s})|\p\Omega^s|)+{C_\Omega}\|\omega_0\|^2_{L^2(\Omega^f)}\\
& +2|v^s_2(t)| \left|\int_{\p\Omega}u^f\cdot\tau\ x_1\ dl\right|+ 2 |v^s_2(t)|\left|\int_{\oft} x_1\omega\ dx\right|\,.
\end{align*}
Using Young's inequality for the last term of the right-hand side, we obtain:
\begin{align}
\|u^f\|^2_{L^2(\oft)} 
\le & v^s_2(t)^2 (|\Omega^s|+2\text{diam}({\Omega^s})|\p\Omega^s|+\text{diam}(\Omega)^2)\n\\
&+({C_\Omega}+|\Omega^f|)\|\omega_0\|^2_{L^2(\Omega^f)} +2|v^s_2(t)| \left|\int_{\p\Omega}u^f\cdot\tau\ x_1\ dl\right|\n\\
\le & D_\Omega\ (v^s_2(t)^2  +|v^s_2(t)| \left|\int_{\p\Omega}u^f\cdot\tau\ x_1\ dl\right|+\|\omega_0\|^2_{L^2(\Omega^f)})
\,,
\label{1403.4}
\end{align}
with
\begin{equation}
\label{040717.1}
D_\Omega=\max({|\Omega^s|+2\text{diam}({\Omega^s})|\p\Omega^s|+\text{diam}(\Omega)^2},2, C_\Omega+|\Omega^f|)\,.
\end{equation}

Thus, if $v^s_2(t_0)=0$, we infer from (\ref{1403.4}) that
\begin{equation}
\label{29.03}
\|u^f\|^2_{L^2(\Omega^f(t_0))}\le D_\Omega\|\omega_0\|^2_{L^2(\Omega^f)}\,.
\end{equation}
Therefore, the total energy satisfies
\begin{equation}
\label{1512.5}
E(t_0)\le \f{D_\Omega}{2}\rho^f \|\omega_0\|^2_{L^2(\Omega^f)}+(\rho^s-\rho^f)g x^s_2(t_0)|\Omega^s|\,.
\end{equation}
Now, from (\ref{1512.4}) we infer that
\begin{equation}
\label{1512.6}
\forall t\in [0,t_0)\,,\  x^s_2(t)>x^s_2(t_0)\,.
\end{equation}
With our assumption (\ref{smallvortex}) of smallness of $\omega_0$ relative to $v^s_2(0)$, (\ref{1512.5}) and (\ref{1512.6}) lead to
$$E(t_0)< \f{m_s}{2}|v^s_2(0)|^2+(\rho^s-\rho^f) g x^s_2(0)=E(0)\,,$$ which is in contradiction with the conservation of energy. Therefore, for all time $t$ such that $\ost$ does not intersect $\p\Omega$, we have $v^s_2(t)<0$, which proves the lemma.

\end{proof}

We can now prove our general finite-time contact Theorem \ref{theorem3}:

\section{Finite-time contact for the rigid body falling over a bottom $\p\Omega$ under the form of a graph}
\label{section14}
\begin{proof}
We now assume that the rigid body does not touch $\p\Omega$ at any finite $t>0$. From \cite{GlSu2015}, we then know the maximal time of existence of a smooth solution  satisfies 
\begin{equation}
\label{1512.9}
T_{max}=\infty\,.
\end{equation}
 
From Lemma \ref{fall}, we infer
\begin{equation*}
\forall t\ge 0\,,\  x^s_2(t)=h-\int_0^t |v^s_2(s)|\ ds\,,
\end{equation*}
which shows that
\begin{equation}
\label{1512.10}
\int_0^\infty |v^s_2(s)|\ ds\le h\,.
\end{equation}
We now integrate (\ref{28.10}) from $0$ to $t$:
\begin{align}
m_s\ (v_2^s(t)-v_2^s(0))
=&\rho^f\int_0^t \int_{\p\Omega}\f{|u^f|^2}{2}\ n_2\ dl\ dt -(m_s-\rho^f|\Omega^s|) gt\n\\
&- \rho^f\int_{\p\Omega} x_1 u^f(\cdot,t)\cdot\tau\ dl+\rho^f\int_{\p\Omega} x_1 u^f(\cdot,0)\cdot\tau\ dl\n\\
&- \rho^f\int_{\oft}u_2^f(\cdot,t)\ dx + \rho^f\int_{\Omega^f}u_2^f(\cdot,0)\ dx  
\,.\label{0604.1}
\end{align}
We now write 
\begin{equation}
\label{0604.2}
\int_{\p\Omega}\f{|u^f|^2}{2}\ n_2\ dl=\int_{\Gamma_1}\f{|u^f|^2}{2}\ n_2\ dl +
\int_{\Gamma_1^c\cap\p\Omega}\f{|u^f|^2}{2} (-\alpha_\Omega+ (n_2+\alpha_\Omega))\ dl\,,
\end{equation}
which  thanks to $\Gamma_1\subset\{n_2\le-\alpha_\Omega <0\}$, $0<\alpha_\Omega\le 1$, provides us with
\begin{equation}
\label{0604.3}
\int_{\p\Omega}\f{|u^f|^2}{2}\ n_2\ dl\le -\alpha_\Omega \int_{\p\Omega}\f{|u^f|^2}{2}\ dl +
\int_{\Gamma_1^c\cap\p\Omega}{|u^f|^2}\ dl\,.
\end{equation}
Using our elliptic estimate (\ref{reg1}) away from the contact zone, together with the other elliptic estimate (\ref{1403.4}) we infer that
\begin{align}
\int_{\p\Omega}\f{|u^f|^2}{2}\ n_2 dl\le & -\alpha_\Omega\int_{\p\Omega}\f{|u^f|^2}{2}\ dl+ C{D_\Omega} (\|\omega_0\|^2_{L^2(\Omega^f)}+
v_2^s(t)^2 \n\\
&+|v^s_2(t)| \left|\int_{\p\Omega}u^f\cdot\tau\ x_1\ dl\right|)+ C \|\omega_0\|^2_{L^2(\Omega^f)}\n\\
 \le & -\alpha_\Omega\int_{\p\Omega}\f{|u^f|^2}{2}\ dl +(C{D_\Omega}+C)|\|\omega_0\|^2_{L^2(\Omega^f)}\n\\
& +C D_\Omega \left(
v_2^s(t)^2 +  |v^s_2(t)| \text{diam}(\Omega)\sqrt{|\p\Omega|}\sqrt{\int_{\p\Omega}|u^f\cdot\tau|^2 dl}\right)\,.
\label{0604.4}
\end{align}
Noticing that $$\int_0^t (v^s_2)^2\ dt\le \|v_2^s\|_{L^\infty(0,t)}
\int_0^t |v_2^s(t)|\ dt\le h \|v_2^s\|_{L^\infty(0,t)}$$ due to (\ref{1512.10}), we infer from (\ref{0604.4}) that for $\epsilon>0$ :
\begin{align*}
\int_0^t\int_{\p\Omega}\f{|u^f|^2}{2}\ n_2\ dl\ dt \le & -\alpha_\Omega\int_0^t \int_{\p\Omega}\f{|u^f|^2}{2}\ dl\ dt + (C{D_\Omega}+C) \|\omega_0\|^2_{L^2(\Omega^f)}\ t\n\\
& +C D_\Omega  \|v_2^s\|_{L^\infty(0,t)}
\int_0^t |v_2^s(t)|\ dt \n\\
&\  +\f{C D_\Omega \text{diam}(\Omega)^2}{4 \epsilon} \int_0^t v^s_2(t)^2\ dt\\
& +C D_\Omega\epsilon|\p\Omega|\int_0^t\int_{\p\Omega}|u^f|^2\ dl\ dt\,.
\end{align*}
Thus, for $\displaystyle\epsilon=\f{\alpha_\Omega}{4C D_\Omega|\p\Omega|}$  this provides us with
\begin{align}
\int_0^t\int_{\p\Omega}\f{|u^f|^2}{2}\ n_2\ dl\ dt & \le -\alpha_\Omega\int_0^t \int_{\p\Omega}\f{|u^f|^2}{4}\ dl\ dt + (C{D_\Omega}+C) \|\omega_0\|^2_{L^2(\Omega^f)}\ t\n\\
& +C D_\Omega (1+\f{CD_\Omega|\p\Omega|\text{diam}(\Omega)^2}{\alpha_\Omega}) \sqrt{\frac{2E(0)}{m_s}}h \,,
\label{0604.5}
\end{align}
where we used the conservation of total energy and (\ref{1512.10}) to obtain (\ref{0604.5}) from the previous inequality.
Reporting (\ref{0604.5}) in (\ref{0604.1}) we obtain:
\begin{align}
\rho^f\int_{\p\Omega}x_1 u^f(\cdot,t)\cdot\tau\ dl\le & -\rho^f\alpha_\Omega\int_0^t \int_{\p\Omega}\f{|u^f|^2}{4}\ dl\ dt\n\\
& + \rho^f C( D_\Omega+1) t \|\omega_0\|^2_{L^2(\Omega^f)}-(\rho^s-\rho^f)|\Omega^s|gt+C_1\,,
\label{0604.6}
\end{align}
where we also used the conservation of energy to have an estimate uniform in time for the terms not explicitly reported in (\ref{0604.6}) and controlled by $C_1>0$ independent of time.
Therefore, remembering our small curl assumption (\ref{smallvortex}),
\begin{equation*}
\rho^f \|x_1\|_{L^\infty(\p\Omega)}\int_{\p\Omega}|u^f|(\cdot,t) dl\ge \frac{\alpha_\Omega \rho^f}{4}\int_0^t \int_{\p\Omega}|u^f|^2 dl dt +\f{(\rho^s-\rho^f)|\Omega^s|g}{2} t-{C_1}\,.
\end{equation*}
Proceeding in an identical manner as we obtain (\ref{e26}) from (\ref{e23bis}) we infer that the maximal time of existence $T_{max}$ of a smooth solution is finite, which is in contradiction with our assumption that it was infinite. Therefore,  
\begin{equation}
\label{1512.23}
T_{max}<\infty\,.
\end{equation}
From \cite{GlSu2015}, the rigid body will then touch $\p\Omega$ at $T_{max}$. \qed
\end{proof}

From now on we assume that $\omega_0=0$.

\section{Equivalence of norms for the velocity field when $\omega=0$}
\label{section15}
We have by integration by parts:
\begin{align*}
0=&\int_{\Omega^f(t)}\triangle u^f\cdot u^f\ dx\\
=&-\int_{\Omega^f(t)}|\nabla u^f|^2\ dx+\int_{\partial\Omega^f(t)} \nabla_n u^f\cdot u^f\ dl\,.
\end{align*}
Therefore, expanding in the $(\tau,n)$ basis,
\begin{equation*}
0=-\int_{\Omega^f(t)}|\nabla u^f|^2\ dx
+\int_{\partial\Omega^f(t)} \nabla_n u^f\cdot n\ u^f\cdot n+\nabla_n u^f\cdot\tau\ u^f\cdot\tau\ dl\,,
\end{equation*}
and using the divergence and curl relations on the boundary integral,
\begin{equation*}
0=-\int_{\Omega^f(t)}|\nabla u^f|^2\ dx
+\int_{\partial\Omega^f(t)} -\nabla_\tau u^f\cdot \tau\ u^f\cdot n+(\nabla_\tau u^f\cdot n)\ u^f\cdot\tau dl\,.
\end{equation*}
Rearranging the last term of the boundary integral, this identity yields:
\begin{align}
0=&-\int_{\Omega^f(t)}|\nabla u^f|^2\ dx-\int_{\partial\Omega^f(t)} \nabla_\tau u^f\cdot \tau\ u^f\cdot n dl\n\\
&+\int_{\partial\Omega^f(t)} (\nabla_\tau (u^f\cdot n) -u^f\cdot\nabla_\tau n)\ u^f\cdot\tau\ dl\label{en1.d}\,.
\end{align}
By integrating by parts the first integral set on $\p\oft$, (\ref{en1.d}) becomes:
\begin{align}
\int_{\Omega^f(t)}|\nabla u^f|^2 dx=
&\int_{\partial\Omega^f(t)} u^f\cdot \nabla_\tau \tau\ u^f\cdot n + u^f\cdot\tau \ \nabla_\tau\ (u^f\cdot n) \ dl\n\\
&+\int_{\p\oft} \nabla_\tau (u^f\cdot n)\ u^f\cdot\tau\ - u^f\cdot \nabla_\tau n\ u^f\cdot\tau\ dl\nonumber\\
=
&\int_{\partial\Omega^f(t)} \kappa u^f\cdot n \ u^f\cdot n + u^f\cdot\tau  \nabla_\tau\ (u^f\cdot n)  dl\n\\
&+\int_{\p\oft} \nabla_\tau (u^f\cdot n) u^f\cdot\tau\ +\kappa u^f\cdot \tau u^f\cdot\tau dl\,,\label{en2.0}
\end{align}
where we used $\nabla_\tau\ \tau=\kappa\ n$ and $\nabla_\tau\ n=-\kappa\tau$ on $\p\oft$ (we remind $n$ points outside $\oft$).
Using the boundary conditions $u^f\cdot n=v_s(t)\cdot n$ on $\p\ost$ and $u^f\cdot n=0$ on $\p\Omega$, we obtain from (\ref{en2.0}):

\begin{align}
\int_{\Omega^f(t)}|\nabla u^f|^2\ dx
=&\int_{\partial\Omega_s(t)} \kappa\ (v_s\cdot n)^2+ 2 u^f\cdot\tau \ v_s\cdot\nabla_\tau n +\kappa\ (u^f\cdot\tau)^2 \ dl\n\\
& + \int_{\partial\Omega}\kappa\ (u^f\cdot\tau)^2 \ dl\label{en2.b}\,.
\end{align}

\section{A formula for acceleration at time of contact for the case without vorticity}
\label{section16}
Using (\ref{29.01}) we obtain:
\begin{equation*}
\int_{\oft}|u^f|^2\ dx=-v^s_2(t)\int_{\p\ost}u^f\cdot\tau\ x_1\ dl\,,
\end{equation*}
Using $u^f=\nabla\phi$ for some potential $\phi$ (since in this Section $u^f$ is curl free), this provides:
\begin{align}
\int_{\oft}|u^f|^2\ dx&=-v^s_2(t)\int_{\p\ost}\nabla_\tau\phi\ x_1\ dl\n\\
&=v^s_2(t)\int_{\p\ost}\phi\ \nabla_\tau x_1\ dl\n\\
&=v^s_2(t)\int_{\p\ost}\phi\ \tau_1\ dl\n\\
&=v^s_2(t)\int_{\p\ost}\phi\ n_2\ dl\,.
\label{1303.1}
\end{align}
We will also need the following simple identity:
\begin{align}
\int_{\oft}u^f_2\ dx&=\int_{\oft} \f{\p\phi}{\p x_2}\ dx\n\\
&=\int_{\p\Omega} \phi\ n_2\ dl+\int_{\p\Omega_s(t)} \phi\ n_2\ dl\,.
\label{1303.2}
\end{align}

From (\ref{28.10}), we obtain:
\begin{align}
m_s\ \frac{dv^s_2}{dt}
=&\rho^f\int_{\p\Omega}\f{|u^f|^2}{2}\ n_2\ dl-\rho^f \frac{d}{dt}\int_{\oft} u^f_2\ dx -(m_s-\rho^f|\Omega_s|) g\n\\
&+ \rho^f\f{d}{dt} \int_{\p\Omega}\phi\ n_2\ dl\,,
\label{1612.2}
\end{align}
where we used $\displaystyle\int_{\p\Omega} u^f\cdot\tau x_1\ dl=-\displaystyle\int_{\p\Omega}\phi n_2\ dl$ which is established similarly as in the proof of (\ref{1303.1}).
Using (\ref{1303.2}) in (\ref{1612.2}) yields:
\begin{equation}
m_s\ \frac{dv^s_2}{dt}
=\rho^f\int_{\p\Omega}\f{|u^f|^2}{2}\ n_2\ dl\  -(m_s-\rho^f|\Omega_s|) g- \rho^f\f{d}{dt} \int_{\p\ost}\phi\ n_2\ dl\,.
\label{1612.3}
\end{equation}
Using (\ref{1303.1}) in (\ref{1612.3}) yields:
\begin{equation}
m_s\ \frac{dv^s_2}{dt}
=\rho^f\int_{\p\Omega}\f{|u^f|^2}{2}\ n_2\ dl\  -(m_s-\rho^f|\Omega_s|) g- \rho^f\f{d}{dt}\left(\f{\|u^f\|^2_{L^2(\oft)}}{v^s_2}\right)\,.
\label{1612.4}
\end{equation}
From our conservation of energy,
 (\ref{1612.4}) becomes:
\begin{align*}
m_s\ \frac{dv^s_2}{dt}
=&\rho^f\int_{\p\Omega}\f{|u^f|^2}{2}\ n_2\ dl\  -(m_s-\rho^f|\Omega_s|) g\\
&- \f{d}{dt}\left(\f{2E(0)-m_s (v^s_2)^2+2(\rho^f-\rho^s)g x^s_2|\Omega^s|}{v^s_2}\right)\,,
\end{align*}
and thus,
\begin{equation}
\label{1612.5bis}
0
=\rho^f\int_{\p\Omega}\f{|u^f|^2}{2}\ n_2\ dl\  -(m_s-\rho^f|\Omega_s|) g- \f{d}{dt}\left(\f{2E(0)+2(\rho^f-\rho^s)g x^s_2|\Omega^s|}{v^s_2}\right)\,.
\end{equation}
Since $n_2<-\alpha_\Omega$ on $\Gamma_1$, we have that $\displaystyle \int_0^{T_{max}}\int_{\Gamma_1} |u^f|^2 n_2 dl$ is well defined in $[-\infty,0]$. Since $u^f$ is bounded away from contact in $L^2(\Gamma_1^c\cap\p\Omega)$ we also have that $\displaystyle \int_0^{T_{max}}\int_{\Gamma_1^c\cap\p\Omega} |u^f|^2 n_2 dl$ is well defined in $\R$. Therefore (\ref{1612.5bis}) shows that
\begin{equation*}
\lim_{t\rightarrow T_{max}^-}\f{2E(0)+2(\rho^f-\rho^s)g x^s_2(t)|\Omega^s|}{v^s_2(t)}\in [-\infty,\infty)\,.
\end{equation*}
Since the coefficient 
\begin{equation}
\label{coef}
2E(0)+2(\rho^f-\rho^s)g x^s_2(t)|\Omega^s\ge 2\underbrace{(\rho^s-\rho^f)}_{>0} g \underbrace{(h-x^s_2(t))}_{>0}|\Omega^s|>0
\end{equation}  is positive this provides the following limit is well-defined:
\begin{equation*}
v^s_2(T_{max})=\lim_{t\rightarrow T_{max}^-} v^s_2(t)\in (-\infty,0]\,,
\end{equation*}
which allows to speak of a velocity at contact.

\section{Blow-up of $L^2(\p\oft)$ norm of the velocity field in the fluid in the case without vorticity}
\label{section17}

\subsection{Blow-up of the $L^2(\p\oft)$ norm of $u^f$ as $t\rightarrow T_{\max}^-$ for the case $v^s_2(T_{max})=0$}
Integrating (\ref{1612.5bis}) from $0$ to $T_{max}$ and using (\ref{coef}) we infer
$$\int_0^{T_{max}} \int_{\p\Omega}|u^f|^2 n_2\ dl=-\infty\,.$$

\subsection{Blow-up of the $L^2(\p\oft)$ norm of $u^f$ as $t\rightarrow T_{\max}^-$ for the case $v^s_2(T_{max})<0$}

\begin{proof}
In this case we have the existence of $\alpha>0$ such that  $$\forall t\in [0,T_{max})\,,\ -\sqrt{\f{2E(0)}{m_s}}\le v^s_2(t)<-\alpha<0\,.$$ 

Let us now assume that we have the existence of $\beta>0$ such that for a sequence of points $t_n$ converging to $T_{max}$ we have
\begin{equation}
\label{100817.1}
\int_{\p\Omega^f(t_n)}|u^f(t_n)|^2\ dl\le \beta\,.
\end{equation}
In the following we work exclusively with this sequence of points, that we denote $t$.

Let us denote by $x_0\in\p\Omega$ a point where intersection occurs at $T_{max}$. By assumption our normal vector satisfies
\begin{equation*}
n_2(x_0)<-\alpha_\Omega<0\,.
\end{equation*}
We also know that at intersection, the direction of the normal vector to $\p\Omega^s(T_{max})$ at $x_0$ will be the same as $n(x_0)$. We now for $\epsilon>0$ small consider the curve $\gamma_\epsilon\subset\p\Omega$ centered at $x_0$, and with length $\epsilon$. For $t$ close to $T_{max}$, we then call $\gamma_\epsilon(t)$ the projection of $\gamma_\epsilon$ on $\p\Omega^s(t)$ parallel to $n(x_0)$. Namely for $t$ close to $T_{max}$ and $\epsilon>0$ small, these two curves are almost like segments of length $\epsilon$ which are orthogonal to $n(x_0)$. 

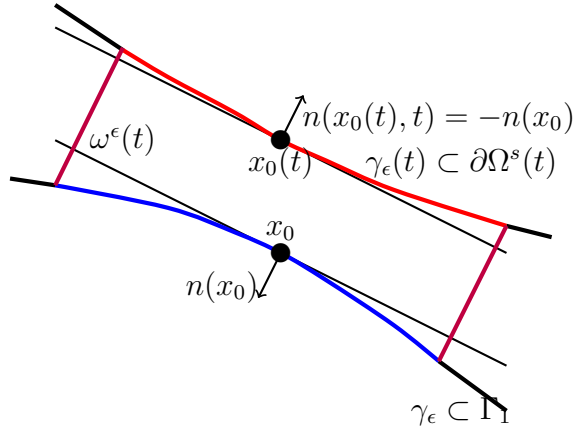
\begin{figure}[h]
\begin{tikzpicture}[scale=0.3]

  \draw[color=black, thick] (-10,5)--(10,-5);
 \draw[color=blue,ultra thick] plot[smooth,tension=.6] coordinates{( -10,3)(-5,2) (-2,0.9) (0,0) (2,-1.2) (5,-3.2) (7,-4.8)};
 \draw[color=black,ultra thick] plot[smooth,tension=.6] coordinates{ (7,-4.8) (10,-7)};
 \draw[color=black,ultra thick] plot[smooth,tension=.6] coordinates{ (-10,3) (-12,3.3)};
   \draw (0,0) node { $\newmoon$};  
\draw[thick,->] (0,0) -- (-1,-2);  
 \draw (-2.6,-1.5) node { {\large $n(x_0)$}};  
  \draw (0,1) node {\large $x_0$};
   \draw (8,-7) node {\large $\gamma_\epsilon\subset\Gamma_1$};
   
     \draw[color=black, thick] (-10,10)--(10,0);
 \draw[color=red,ultra thick] plot[smooth,tension=.6] coordinates{( (-7,9) (-5,7.8) (-2,6.2) (0,5) (2,4.1) (5,2.8)(10,1.2)};
  \draw[color=black,ultra thick] plot[smooth,tension=.6] coordinates{( -10,11)(-7,9)};
   \draw[color=black,ultra thick] plot[smooth,tension=.6] coordinates{( 10,1.2)(12,0.7)};
   \draw (0,5) node { $\newmoon$};  
\draw[thick,->] (0,5) -- (1,7);  
 \draw (7,6) node { {\large $n(x_0(t),t)=-n(x_0)$}};  
  \draw (0,4) node {\large $x_0(t)$};
   \draw (8,4) node {\large $\gamma_\epsilon(t)\subset\p\ost$};
   \draw[color=purple, ultra thick] (7,-4.8)--(10,1.2); 
    \draw[color=purple, ultra thick] (-10,3)--(-7,9.1);
   \draw (-7,5) node {\large $\omega^\epsilon(t)$};
   \end{tikzpicture} 
 \caption{ $\omega^\epsilon(t)$=fluid region between blue, red and purple curves.}\label{fig2}  
\end{figure}

\begin{remark}
$\gamma^\epsilon(t)$ and $\gamma_\epsilon$ do not need to be locally on one side of the tangent at $x_0(t)$ and $x_0$ respectively.
\end{remark}

Since our curves are of class $C^2$ we have the existence of $C>0$ such that the area $A_1$ between $\gamma_\epsilon$ and the tangent line passing through $x_0$ satisfies:
\begin{equation}
\label{100817.3}
|A_1|\le C \epsilon^3\,.
\end{equation}
Next we remember that since the fall of the rigid body is purely vertical, the vertical projection of $x_0$ onto $\p\Omega^s(t)$, that we call $ x_0(t)$ satisfies 
\begin{equation}
\label{100817.10}
n(x_0(t))=-n(x_0)\,,
\end{equation} 
as well as $x_0(t)\in\gamma_\epsilon(t)$ if $t$ is close enough to $T_{max}$. Then similarly, by increasing $C$ if necessary, the area $A_2(t)$ between the tangent line passing though $x_0(t)$ (which is perpendicular to $n(x_0)$) and $\gamma_\epsilon(t)$ satisfies:
\begin{equation}
\label{100817.4}
|A_2(t)|\le C \epsilon^3\,.
\end{equation}
Next the distance between between $x_0$ and $x_0(t)$ satisfies (since $x_0(T_{max})=x_0$):
\begin{equation}
\label{100817.5}
|x_0-x_0(t)|\le \underbrace{\sqrt{\f{2E(0)}{m_s}}}_{C_0}(T_{max}-t)\,.
\end{equation}
If we denote by $\omega_\epsilon(t)$ the region comprised between $\gamma_\epsilon$, $\gamma_\epsilon(t)$, and the two segments parallel to $n(x_0)$ and starting at an extremity point of $\gamma_\epsilon$, we have:
\begin{equation}
\label{100817.6}
|\omega_\epsilon(t)|\le |A_1(t)|+|A_2(t)|+\epsilon |x_0-x_0(t)|\le 2C\epsilon^3+ C_0 (T_{max}-t)\epsilon\,.
\end{equation}
Now by integration by parts,
\begin{equation}
\label{100817.7}
\int_{\omega_\epsilon(t)} \nabla_{n(x_0)} u^f\cdot n(x_0)\ dx=\int_{\gamma_\epsilon\cup\gamma_\epsilon(t)} u^f\cdot n(x_0)\ n(x_0)\cdot n\ dl\,.
\end{equation}
Thus, with (\ref{en2.b}) and our assumption (\ref{100817.1}) we obtain by Cauchy-Schwarz:
\begin{equation}
\label{100817.8}
\left|\int_{\gamma_\epsilon\cup\gamma_\epsilon(t)} u^f\cdot n(x_0)\ n(x_0)\cdot n\ dl\right|\le C\sqrt{|\omega^\epsilon(t)|}\,,
\end{equation}
for some $C>0$ independent of $t$ and $\epsilon$.
We first have on $\gamma_\epsilon$,
\begin{equation}
\label{100817.9}
|n(x_0)-n|\le \max{|\kappa|}|\gamma_\epsilon|\le C \epsilon\,,
\end{equation}
for some $C>0$ independent of $t$ and $\epsilon$.
Taking $\epsilon>0$ and $T_{\max}-t>0$ small enough, we have
\begin{equation}
\label{100817.11bis}
|\gamma_\epsilon(t)|\le 2\epsilon\,.
\end{equation}

Due to $x_0(t)\in\gamma_\epsilon(t)$ for $t$ close enough to $T_{max}$ and (\ref{100817.11bis})
\begin{equation*}
\forall x\in \gamma_\epsilon(t)\,,\ \ |x- x_0(t)|\le 2\epsilon\,.
\end{equation*}
Therefore the distance on $\gamma_\epsilon(t)$ satisfies (for $\epsilon>0$ small enough) that 
\begin{equation*}
\forall x\in \gamma_\epsilon(t)\,,\ \ d_{\gamma_\epsilon(t)}(x, x_0(t))\le 3\epsilon\,.
\end{equation*}
Therefore,
\begin{equation}
\label{100817.13}
\forall x\in\gamma_\epsilon(t)\,,\ | n({x_0}(t))-n(x)|\le  \max{|\kappa|}max_{\gamma_\epsilon(t)} d_{\gamma_\epsilon(t)}(x,\tilde x_0(t))
\le  C \epsilon \,,
\end{equation}
for some $C>0$ independent of $t$ and $\epsilon$.
We now write for $\gamma_\epsilon$:
\begin{equation*}
u^f\cdot n(x_0) n(x_0)\cdot n= u^f\cdot (n(x_0)-n) n(x_0)\cdot n+u^f\cdot n (n(x_0)-n)\cdot n+u^f\cdot n \underbrace{n \cdot n}_{=1}\,,
\end{equation*}
while for $\gamma_\epsilon(t)$,
\begin{equation*}
u^f\cdot n(x_0) n(x_0)\cdot n= u^f\cdot (n(x_0)+n) n(x_0)\cdot n-u^f\cdot n (n(x_0)+n)\cdot n+u^f\cdot n \underbrace{n \cdot n}_{=1}\,.
\end{equation*}
Using these two equations in (\ref{100817.8}), (\ref{100817.9}) and (\ref{100817.13}) we infer that for some $C>0$ independent of $t$ and $\epsilon$
\begin{align}
\left|\int_{\gamma_\epsilon(t)} u^f\cdot n\ dl\right|\le & C\sqrt{|\omega^\epsilon(t)|}+ C\epsilon\int_{\gamma_\epsilon(t)\cup\gamma_\epsilon}|u^f|\ dl\n\\
\le & C\sqrt{|\omega^\epsilon(t)|}+ C\epsilon(\sqrt{|\gamma_\epsilon|}+\sqrt{|\gamma_\epsilon(t)|})\|u^f\|_{L^2(\p\oft)}\,.
\label{100817.14}
\end{align}
 Using the boundary condition $u^f\cdot n=v^s_2 n_2$ on $\p\ost$ and our estimate (\ref{100817.6}) as well as our crucial assumption (\ref{100817.1}) we then obtain from (\ref{100817.14}) that for some $C>0$ independent of $t$ and $\epsilon$:
\begin{equation}
\label{100817.15}
\left|v^s_2(t) \int_{\gamma_\epsilon(t)}  n_2\ dl\right|\le C\sqrt{\epsilon^3+\epsilon(T_{max}-t)}+ C\epsilon\sqrt{\epsilon}\,.
\end{equation}
Therefore, since for $\epsilon>0$ small enough and $t$ close enough to $T_{max}$ $n$ on $\gamma_\epsilon(t)$ is close to $-n(x_0)$ which satisfies $n_2(x_0)<-\alpha_\Omega$, we then infer from (\ref{100817.15}) that
\begin{equation*}
|v^s_2(t)| \alpha_\Omega\epsilon \le 2 C\sqrt{\epsilon^3+\epsilon(T_{max}-t)}+ 2C\epsilon\sqrt{\epsilon}\,.
\end{equation*}
Letting (with $\epsilon>0$ fixed) $t$ converge to $T_{\max}$ we then have
\begin{equation*}
|v^s_2(T_{max})| \alpha_\Omega \le 2 C\sqrt{\epsilon}+ 2C\sqrt{\epsilon}\,.
\end{equation*}
This identity being true for any $\epsilon>0$ small enough we obtain that $v^s_2(T_{max})=0$, in contradiction with our assumption that $v^s_2(T_{max})<0$. Therefore our assumption (\ref{100817.1}) has to be rejected, which means we proved 1) of Theorem \ref{theorem4}:
\begin{equation}
\label{100817.17}
\lim_{t\rightarrow T_{max}^-} \|u^f\|_{L^2(\p\Omega^f(t))}\rightarrow \infty\,.
\end{equation}
\qed 
\end{proof}

\begin{remark}
Away from the contact points at time $T_{max}$, the velocity field in the fluid stays smooth by elliptic regularity (by (\ref{reg1}) and (\ref{reg2})), so it is indeed next to the contact points that the blow-up is localized. 
\end{remark}

 We now establish that contact occurs with an infinite upward acceleration for the solid, except for the case where the contact zone at $T_{max}$ contains a curve of non zero length, in which case the acceleration becomes strictly positive as contact nears, while staying bounded.

\section{Positive or infinite upward solid acceleration at time of contact for the case without vorticity}
\label{section18}

From (\ref{1612.5bis}) we immediately have,
\begin{align}
2\f{E(0)+(\rho^f-\rho^s)g x^s_2|\Omega^s|}{(v^s_2)^2}\f{dv^s_2}{dt}
=&-\rho^f\int_{\p\Omega}\f{|u^f|^2}{2}\ n_2\ dl +(\rho^f-\rho^s)g |\Omega^s|\label{310817.1}\\
\ge &\rho^f\alpha_\Omega \int_{\p\Omega}\f{|u^f|^2}{2} dl +(\rho^f-\rho^s)g |\Omega^s|\n\\
&-\rho^f\underbrace{\int_{\p\Omega\cap\Gamma_1^c}\f{|u^f|^2}{2}(n_2+\alpha_{\Omega}) dl}_{\text{bounded by }\ (\ref{reg1})}\,,
\label{1612.5}
\end{align}
where we used in (\ref{1612.5}) the fact that $n_2<-\alpha_\Omega$ on $\Gamma_1$.

We can now conclude on our acceleration. We will have to distinguish two cases.

\noindent{\bf Case 1. $v^s_2(T_{max})<0$}

From (\ref{coef}), we infer from (\ref{1612.5}), the fact that the velocity stays away from zero, and our blow-up (\ref{100817.17}) that
\begin{equation}
\label{1612.6}
\lim_{t\rightarrow T_{max}^-}\f{dv^s_2}{dt}(t)=\infty\,.
\end{equation}

 From (\ref{1612.6}) and (\ref{ch-2}), we immediately have
\begin{equation}
\label{2812.1}
\lim_{t\rightarrow T_{max}^-}\int_{\p\ost} pn\ dl=\infty\,,
\end{equation}
which establishes the blow-up of the normalized (to zero on top of $\p\Omega$ on $x_1=0$) pressure on $\p\ost$ as we approach contact.

We now get back to (\ref{1612.5}). From the fact that the velocity stays away from zero as $t\rightarrow T_{\max}^-$ (from our assumption $v^s_2(T_{max})<0$) we then infer that $\f{1}{v^s_2(t)}$ stays bounded as $t\rightarrow T_{\max}^-$. Therefore, from (\ref{1612.5}) and $\f{d(v^s_2)^{-1}}{dt}=-\f{1}{(v^s_2)^2} \f{dv^s_2}{dt}$, this implies that
\begin{equation}
\label{3012.6}
\int_0^{T_{max}} \|u^f\|^2_{L^2(\p\Omega)}\ dt<\infty\,.
\end{equation}

\noindent{\bf Case 2. $v^s_2(T_{max})=0$}.
Here
\begin{equation}
\label{1303.3}
\lim_{t\rightarrow T_{max}^-} {v^s_2(t)}=0\,,
\end{equation}
simply translates into
\begin{equation}
\label{1303.4}
\lim_{t\rightarrow T_{max}^-} {(v^s_2(t))^{-1}}=-\infty\,.
\end{equation}
By integrating (\ref{310817.1}) from $0$ to $T_{max}$ we then obtain:
\begin{equation}
\label{1303.5}
\int_0^{T_{max}} \int_{\p\Omega} |u^f|^2 n_2\ dl\ dt=-\infty\,.
\end{equation}
By conservation of total energy, we have by (\ref{coef})
\begin{equation}
\rho^f \int_{\oft}|u^f|^2\ dx\rightarrow 2(E(0)-(\rho^s-\rho^f) g|\Omega^s| x_2(T_{max}))>0\,,\ \text{as}\ t\rightarrow T_{max}^-\,,
\end{equation}
Using (\ref{1303.1}), we obtain
\begin{equation}
\label{1303.6}
\int_{\p\ost}\phi n_2\ dx\ v^s_2(t)\rightarrow \f{2}{\rho^f} (E(0)-(\rho^s-\rho^f) g|\Omega^s| x_2(T_{max}))\,,\ \text{as}\ t\rightarrow T_{max}^-\,.
\end{equation}
Using (\ref{1303.2}) in (\ref{1303.6}) then yields:
\begin{equation}
\label{1303.7}
\int_{\p\Omega}\phi n_2\ dx\ v^s_2(t)\rightarrow -\f{2}{\rho^f}(E(0)-(\rho^s-\rho^f) g|\Omega^s| x_2(T_{max}))\,,\ \text{as}\ t\rightarrow T_{max}^-\,,
\end{equation}
where we also used the fact that $u^f$ is bounded in $L^2(\oft)$. By  a reasoning similar as when obtaining (\ref{1303.1}), this is equivalent to:
\begin{equation}
\label{1303.8}
\int_{\p\Omega} u^f\cdot\tau x_1\ dl\ v^s_2(t)\rightarrow \underbrace{\f{2}{\rho^f}(E(0)-(\rho^s-\rho^f) g|\Omega^s| x_2(T_{max}))}_{C_0>0}\,,\ \text{as}\ t\rightarrow T_{max}^-\,,
\end{equation}
Using Cauchy-Schwarz, this then provides some $C_1>0$ such that for $t$ close enough to $T_{max}$:
\begin{equation}
\label{1303.9}
-\int_{\p\Omega} |u^f|^2  dl \ge \f{C_1}{v^s_2(t)^2}\,.
\end{equation}
Reporting (\ref{1303.9}) in (\ref{1612.5}) yields:
\begin{equation}
\label{160817.1}
\f{dv^s_2}{dt}(t)\ge \f{\rho^f C_1\alpha_\Omega}{2 E(0)}=a_0>0\,,
\end{equation}
for any $t$ close enough to $T_{max}$, which again shows a positive upward acceleration for the rigid body as contact nears, opposing the fall.

We now prove that for the case when the part of  $\p\Omega$ intersecting $\p\Omega^s(T_{max})$ is of zero measure and $v^s_2(T_{max})=0$, we have an infinite upward acceleration for the solid at the time of contact. 

From now on $C$ denote a generic positive constant independent of $t<T_{max}$.

Let us now fix $\epsilon>0$. 

Using our assumption that the intersecting part of $\p\Omega$ is of zero length, we write 
\begin{equation}
\label{1303.10}
\p\Omega=\Gamma_\epsilon\cup(\Gamma_\epsilon^c\cap\p\Omega)\,,
\end{equation}
where $\Gamma_\epsilon$ is a union of curves containing the contact points at $T_{max}$ and whose total length is less than $\epsilon$.
We now write
\begin{equation}
\label{1303.11}
\left|\int_{\p\Omega} u^f\cdot\tau x_1\ dl\right|\le C \sqrt{\epsilon}\sqrt{-\int_{\Gamma_\epsilon} n_2 |u^f\cdot\tau|^2\ dl}+C \int_{\Gamma_\epsilon^c\cap\p\Omega} |u^f\cdot\tau|^2\ dl\,,
\end{equation}
since $n_2\le-\alpha_\Omega<0$ on the contact part of $\p\Omega$.

Due to our control of $u^f$ away from the contact zone by (\ref{reg1}), we have from (\ref{1303.11})
\begin{equation}
\label{1303.12}
\left|\int_{\p\Omega} u^f\cdot\tau x_1\ dl\right|\le C \sqrt{\epsilon}\sqrt{-\int_{\Gamma_\epsilon} n_2 |u^f\cdot\tau|^2\ dl}+C_\epsilon\,,
\end{equation}
where $C_\epsilon$ is independent of $t$ (but blows up as $\epsilon\rightarrow 0$).
With (\ref{1303.8}), (\ref{1303.12}) provides for $t$ close enough to $T_{max}$ (with $\epsilon>0$ fixed, and remembering that $\lim_{t\rightarrow T_{max}} v^s_2(t)=0$)):
\begin{equation}
\label{1303.13}
\left|\f{(E(0)-(\rho^s-\rho^f) g|\Omega^s| x^s_2(T_{max}))}{\rho^f v^s_2(t)}\right| \le C \sqrt{\epsilon}\sqrt{-\int_{\Gamma_\epsilon} n_2 |u^f\cdot\tau|^2\ dl}\,.
\end{equation}
 Therefore,
\begin{equation}
\label{1303.14}
\f{(E(0)-(\rho^s-\rho^f) g|\Omega^s| x^s_2(T_{max}))^2}{(\rho^f)^2 v^s_2(t)^2 C^2\epsilon} \le  \int_{\p\Omega} |u^f\cdot\tau|^2\ dl\,.
\end{equation}
Using (\ref{1303.14}) in (\ref{1612.5}) then yields for $t$ close enough to $T_{max}$:
\begin{equation*}
\f{dv^s_2}{dt}(t)\ge \f{(E(0)-(\rho^s-\rho^f) g|\Omega^s| x^s_2(T_{max}))}{4 \rho^f C^2}\f{\alpha_\Omega}{\epsilon}\,,
\end{equation*}
which given the arbitrary nature of $\epsilon>0$ provides:
\begin{equation}
\lim_{t\rightarrow T_{max}^-}\f{dv^s_2}{dt}(t)=\infty\,.
\end{equation}

We now treat the remaining case where the contact zone contains a curve $\Gamma_c\subset\Gamma_1\cap\p\Omega^s(T_{max})$ of non zero length. Since $n_2<-\alpha_\Omega$ on $\Gamma_1$, we have the existence of $f$ smooth such that $\Gamma_1$ is the graph of a function $f$ for $x_1\in\cup_{i\in I} [\alpha_i,\beta_i]$ for some $\alpha_i\le\beta_i$. In a neighborhood of the region of the rigid body which intersects $\Gamma_1$ at $T_{max}$, we also have that $\p\Omega^s(T_{max})$ is the graph of a function $g$, which equals $f$ on the contact zone. We have that $f=g$ for $x_1\in\cup_{i\in J} [a_i,b_i]$ ($J\subset I$ and $[a_i,b_i]\subset [\alpha_i,\beta_i]$) and for $\epsilon>0$ small, we have the existence of $c_\epsilon>0$ small such that
\begin{equation}
\label{200817.1}
\forall x\in [a_i-c_\epsilon,b_i+c_\epsilon]\,,\ f(x_1)\le g(x_1)\le f(x_1)+\epsilon\,.
\end{equation}
We now define for $t<T_{max}$ the distance in the vertical direction between the two curves at time $t$:
\begin{equation}
\label{180817.1}
\eta^s_2(t)=\int_{T_{max}}^t v^s_2(s) ds>0\,.
\end{equation}
We denote by (dropping the $i$ index)
\begin{align*}
S^-_{a,b,\epsilon}=&\{(x_1,f(x_1)); x_1\in [a-c_\epsilon,b+c_\epsilon]\}\subset\Gamma_1\,,\\
S^+_{a',b'}(t)=&\{(x_1,f(x_1)+\eta^s_2(t)); x_1\in [a',b']\subset[a,b]\}\,,\\
\Omega^f_{a,b,\epsilon}(t)=&\{(x_1,x_2); x_1\in [a-c_\epsilon,b+c_\epsilon]; x_2\in (f(x_1),g(x_1)+\eta^s_2(t))\}\,,\\
\Omega^f_{a',b'}(t)=&\{(x_1,x_2); x_1\in [a',b']\subset [a,b]; x_2\in (f(x_1),f(x_1)+\eta^s_2(t))\}\,.
\end{align*}

\begin{figure}[h]
\begin{tikzpicture}[scale=0.3]


 \draw[color=red,ultra thick] plot[smooth,tension=.6] coordinates{( (-7,9) (-5,7.8) (-2,6.2) (0,5) (2,4.1) (5,2.8)(10,1.2)};
  \draw[color=black,ultra thick] plot[smooth,tension=.6] coordinates{( -10,11)(-7,9)};
   \draw[color=black,ultra thick] plot[smooth,tension=.6] coordinates{( 10,1.2)(12,0.7)};
   \draw[color=blue,ultra thick] plot[smooth,tension=.6] coordinates{( (-7,4) (-5,2.8) (-2,1.2) (0,0) (2,-0.9) (5,-2.2)(10,-3.8)};
  \draw[color=black,ultra thick] plot[smooth,tension=.6] coordinates{( -10,5.5) (-8,4.5) (-7,4)};
   \draw[color=black,ultra thick] plot[smooth,tension=.6] coordinates{( 10,-3.8)(12,-4.5)};  
  \draw (0,3) node {\large $\Omega_{a,b}(t)$};
   \draw (8,4) node {\large $S^+_{a,b}(t)\subset\p\ost$};
   \draw (7,-4) node {\large $S^-_{a,b}(t)\subset\p\Omega$};
   \draw[color=purple, ultra thick] (-7,9)--(-7,4);
   \draw[color=purple, ultra thick] (10,1.2)--(10,-3.8); 
   \draw[color=green, ultra thick] (-8,9.6)--(-8,4.5);
   \draw[color=green, ultra thick] (11,1)--(11,-4.1); 
   \draw[thick,->] (-9,-9)--(14,-9);
   \draw (14,-10) node {\large $x_1$};
    \draw (-6.5,-9.5) node {\large $a$};
     \draw (10.5,-9.5) node {\large $b$};
     \draw (-8.5,-9.5) node { $a-c_\epsilon$};
     \draw (12.5,-8.5) node { $b+c_\epsilon$};
   \draw[black,dashed] (-7,4)-- (-7,-9);
   \draw[black,dashed] (10,-3.8)-- (10,-9);
   \draw[black,dashed] (-8,4.5)-- (-8,-9);
   \draw[black,dashed] (11,-4.1)-- (11,-9);
   \end{tikzpicture} 
 \caption{ Here, the blue and red curves are translated vertically from each other. Contact at time $T_{max}$ does not occur along the black curves. $\Omega^f_{a,b,\epsilon}(t)$ is defined as $\Omega^f_{a,b}(t)$ with the green vertical lines replacing the purple ones. }\label{fig3}  
\end{figure}
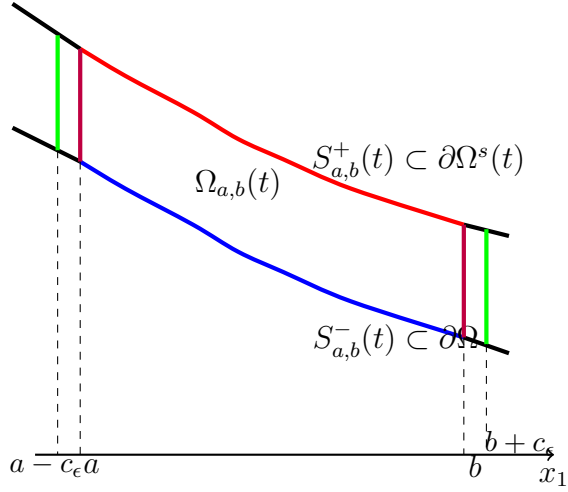

Since the fall is vertical with velocity constant in space, we have that
\begin{align*}
\Omega^f_{a,b}(t)\subset\Omega^f_{a,b,\epsilon}(t)\subset\Omega^f(t)\,,\\
S^+_{a,b}(t)\subset\p\Omega^s(t)\,,
\end{align*}
For $\alpha\in [0,\f{b-a}{4}]$ we now denote by 
\begin{equation}
\label{170817.3}
\Omega_\alpha(t)=\Omega^f_{a+\alpha,a+3\f{b-a}{4}+\alpha}(t)\subset\Omega^f(t)\,.
\end{equation}
From the divergence theorem, $\int_{\p\Omega_\alpha(t)} u^f\cdot n\ dl=0$, which provides if we denote $S_\alpha(t)=S^+_{a+\alpha,a+3\f{b-a}{4}+\alpha}(t)\subset\p\Omega^s(t)$:
\begin{equation}
\label{170817.5}
v^s_2(t) \int_{S_\alpha(t)} n_2 dl=\int_{\p\Omega_\alpha(t)\cap\{x_1=a+\alpha\}} u^f_1 dx_2 - \int_{\p\Omega_\alpha(t)\cap\{x_1=a+3\f{b-a}{4}+\alpha\}} u^f_1 dx_2\,.
\end{equation}
Integrating (\ref{170817.5}) with respect to $\alpha$ (variable $x_1$) between $0$ and $\f{b-a}{4}$ yields:
\begin{equation*}
v^s_2(t) \int_0^{\f{b-a}{4}}\int_{S_\alpha(t)} n_2 dl\ dx_1=\int_{\Omega^f_{a,a+\f{b-a}{4}}(t)} u^f_1 dx - \int_{\Omega^f_{a+3\f{b-a}{4},b}(t)}u^f_1 dx\,.
\end{equation*}
Therefore, by Cauchy-Schwarz applied to the right-hand side of this identity,
\begin{equation*}
|v^s_2(t)| \left|\int_0^{\f{b-a}{4}}\int_{S_\alpha(t)} n_2 dl\ dx_1\right|\le \sqrt{|\Omega^f_{a,b}(t)|} \|u^f\|_{L^2(\Omega^f_{a,b}(t))}\,,
\end{equation*}
which provides us (since $u^f$ is bounded in $L^2(\Omega^f(t))$ and $|\Omega^f_{a,b}(t)|=\eta^s_2(t)\times (b-a)$ ) with the existence of $C>0$ independent of $t<T_{max}$ such that
\begin{equation}
\label{200817.2}
|v^s_2(t)|\le C \sqrt{\eta^s_2(t)}\,.
\end{equation}

\begin{remark}
This inequality uses in a crucial way the fact contact occurs on a zone containing a curve of nonzero length.
\end{remark}

We define the vertical distance between the two graphs at time $t$ at $x_1$:
$$d(x_1,t)=g(x_1)-f(x_1)+\eta^s_2(t)\,,$$
and define for any $x_1\in[a-c_\epsilon,b+c_\epsilon]$ the vertical average of $u^f$:
\begin{equation*}
\bar{u}_1(x_1,t)=\f{1}{d(x_1,t)}\int_{f(x_1)}^{g(x_1)+\eta^s_2(t)} u^f_1(x_1,x_2,t)dx_2\,.
\end{equation*}
Due to $u^f=\nabla^\perp\phi$ with $\phi=0$ on $\p\Omega$ and $\phi=v^s_2(t) x_1$ on $\p\Omega^s(t)$, we have
\begin{equation}
\label{180817.3}
\bar{u}_1(x_1,t)=-\f{v^s_2(t)x_1}{d(x_1,t)}\,.
\end{equation}
Next since $\bar{u_1}(x_1,t)$ is a value taken by $u^f_1$ on the vertical segment $\{x_1\}\times [f(x_1),g(x_1)+\eta^s_2(t)]$, we have the existence of $\alpha(x_1,t)\in [f(x_1),g(x_1)+\eta^s_2(t)] $ such that $\bar{u}_1(x_1,t)=u^f_1(x_1,\alpha(x_1,t),t)$, which leads to
\begin{equation}
\label{190817.1}
 u_1(x_1,f(x_1),t)-\bar{u}_1(x_1,t)=\int_{\alpha(x_1,t)}^{f(x_1)} \f{\p u^f_1}{\p x_2}(x_1,x_2,t) dx_2\,.
\end{equation}
This implies by Cauchy-Schwarz that
\begin{equation}
\label{190817.2}
(u^f_1(x_1,f(x_1),t)-\bar{u}_1(x,t))^2\le d(x_1,t) \int_{f(x_1)}^{g(x_1)+\eta^s_2(t)} \left|\f{\p u^f_1}{\p x_2}(x_1,x_2,t)\right|^2 dx_2 \,.
\end{equation}
We now multiply (\ref{190817.2}) by the length element $\sqrt{1+f'^2(x_1)}$ on $\p\Omega$ and integrate the resulting relation with respect to $x_1\in [a-c_\epsilon,b+c_\epsilon]$. Remembering that $c_\epsilon$ was chosen so that
(\ref{200817.1}) was satisfied
we then obtain:
\begin{equation}
\label{190817.4}
\int_{S^-_{a,b,\epsilon}(t)} |u^f_1-\bar{u}_1|^2 dl\le C (\epsilon+\eta^s_2(t)) \int_{\Omega^f_{a,b,\epsilon}(t)} \left|\f{\p u^f_1}{\p x_2}(x_1,x_2,t)\right|^2 dx \,.
\end{equation} 
Using the triangular inequality we infer from (\ref{190817.4}) and (\ref{180817.3}) that
\begin{equation}
\label{180717.5}
\int_{S^-_{a,b,\epsilon}(t)} |u_1^f|^2 dl\le C (\epsilon+\eta^s_2(t)) \int_{\Omega^f_{a,b,\epsilon}(t)} |\f{\p u^f_1}{\p x_2}|^2 dx+ C \f{(v^s_2(t))^2}{\eta^s_2(t)^2} \,,
\end{equation}
where we remind $C$ is a generic constant independent of time.
 Since by our assumption on $\Gamma_1$, $n_2\le -\alpha_\Omega<0$ , and since $u^f\cdot n=0$ on $S^-_{a,b,\epsilon}$, we infer from (\ref{180717.5}) that
\begin{equation}
\label{190817.5}
\int_{S^-_{a,b,\epsilon}(t)} |u^f|^2 dl\le C (\epsilon+\eta^s_2(t)) \int_{\Omega^f_{a,b,\epsilon}(t)} |\f{\p u^f}{\p x_2}|^2 dx+ C \f{(v^s_2(t))^2}{\eta^s_2(t)^2} \,.
\end{equation}
We now work with $t$ close enough to $T_{max}$ so that $\eta^s_2(t)\le\epsilon$. Therefore, with our generic constant $C$, (\ref{190817.5}) becomes:
\begin{equation}
\label{190817.6}
\int_{S^-_{a,b,\epsilon}(t)} |u^f|^2 dl\le C \epsilon \int_{\Omega^f_{a,b,\epsilon}(t)} |\f{\p u^f}{\p x_2}|^2 dx+ C \f{(v^s_2(t))^2}{\eta^s_2(t)^2} \,.
\end{equation}
Summing over all regions of the type $S^-_{a,b,\epsilon}(t)$ in case contact occurs on a non connected set, we then have from (\ref{190817.6})
\begin{equation}
\label{190817.7}
\int_{\p\Omega} |u^f|^2 dl\le C \epsilon \int_{\Omega^f(t)} |\f{\p u^f}{\p x_2}|^2 dx+ C \f{(v^s_2(t))^2}{\eta^s_2(t)^2} +C_\epsilon\,,
\end{equation}
with $C_\epsilon$ being a constant independent of time (and becoming large as $\epsilon$ is small).
Due to (\ref{en2.b}), this inequality implies that
\begin{equation*}
\int_{\p\Omega} |u^f|^2 dl\le C \epsilon \int_{\p\Omega^f(t)} |u^f|^2 dl+ C \f{(v^s_2(t))^2}{\eta^s_2(t)^2} +C_\epsilon\,.
\end{equation*}
By choosing $\epsilon>0$ small enough, this inequality implies (we remind $C>0$ is generic and $\p\Omega^f(t)=\p\ost\cup\p\Omega$):
\begin{equation}
\label{190817.8}
\int_{\p\Omega} |u^f|^2 dl\le C \epsilon \int_{\p\Omega^s(t)} |u^f|^2 dl+ C \f{(v^s_2(t))^2}{\eta^s_2(t)^2} +C_\epsilon\,.
\end{equation}
Next we notice that since $n_2\le -\alpha_\Omega$ on $\Gamma_1$, we have $n_2\ge \alpha_\Omega>0$ in the region of $\p\Omega^s(t)$ near $\p\Omega$. Since $u^f$ is bounded away from the contact zone, we have
\begin{equation}
\label{190817.9}
\int_{\p\Omega^s(t)} |u^f|^2 dl\le \f{1}{\alpha_\Omega}\int_{\p\Omega^s(t)} |u^f|^2 n_2 dl +C\,.
\end{equation}
By integration by parts in $\Omega^f(t)$,
\begin{equation*}
\int_{\p\Omega^s(t)} |u^f|^2 n_2 dl=-\int_{\p\Omega} |u^f|^2 n_2 dl+2\int_{\Omega^f(t)}\f{\p u^f}{\p x_2}\cdot u^f\ dx\,.
\end{equation*}
Thus, for $\delta>0$ small to be precised later
\begin{align}
\int_{\p\Omega^s(t)} |u^f|^2 n_2 dl\le & \int_{\p\Omega} |u^f|^2 dl + \delta\int_{\Omega^f(t)}|\nabla u^f|^2 \ dx+\f{1}{\delta}\int_{\Omega^f(t)}|u^f|^2dx\n\\
\le &  \int_{\p\Omega} |u^f|^2 dl + \delta\int_{\Omega^f(t)}|\nabla u^f|^2 \ dx+\f{C}{\delta}\n\\
\le &  \int_{\p\Omega} |u^f|^2 dl + C \delta\int_{\p\Omega^f(t)}| u^f|^2 \ dl+\f{C}{\delta}\,,
\label{190817.11}
\end{align}
where we used (\ref{en2.b}). 
By using (\ref{190817.9}) we then see that for $C\delta<\f{\alpha_\Omega}{2}$ we have from (\ref{190817.11}):
\begin{equation*}
\int_{\p\Omega^s(t)} |u^f|^2 n_2 dl\le C \int_{\p\Omega} |u^f|^2 dl+C\,.
\end{equation*}
From (\ref{190817.9}), (and remembering $C$ is generic)
\begin{equation}
\label{190817.12}
\int_{\p\Omega^s(t)} |u^f|^2 dl\le C \int_{\p\Omega} |u^f|^2 dl+C\,.
\end{equation}
Therefore, by picking $\epsilon>0$ small enough, (\ref{190817.12}) used in (\ref{190817.8}) implies:
\begin{equation}
\label{190817.13}
\int_{\p\Omega} |u^f|^2 dl\le  C \f{(v^s_2(t))^2}{\eta^s_2(t)^2} +C\,.
\end{equation}
Using (\ref{190817.13}) in our formula for acceleration (\ref{1612.5}) we obtain:
\begin{equation*}
\f{dv^s_2}{dt}\le C \f{(v^s_2(t))^4}{\eta^s_2(t)^2} +Cv^s_2(t)^2\,.
\end{equation*}
Using (\ref{200817.2}) in the previous inequality provides:
\begin{equation*}
\f{dv^s_2}{dt}\le C \,,
\end{equation*}
and therefore with (\ref{160817.1})
\begin{equation*}
0<a_0\le\liminf_{t\rightarrow T_{max}^-} \f{dv^s_2}{dt}(t)\le \limsup_{t\rightarrow T_{max}^-} \f{dv^s_2}{dt}(t)<\infty\,.
\end{equation*}
which finishes the proof of Theorem \ref{theorem4}.
\qed

\end{document}